\numberwithin{equation}{section}
\newtheorem{thm}[equation]{Theorem}
\newtheorem{lem}[equation]{Lemma}
\newtheorem{cor}[equation]{Corollary}
\newtheorem{prop}[equation]{Proposition}
\newtheorem*{mlem}{{\sc Lemma 2.1.4}}
\theoremstyle{definition}
\newtheorem{rem}[equation]{Remark}
\newtheorem{defn}[equation]{Definition}
\renewcommand{\labelenumi}{\theenumi}
\renewcommand{\theenumi}{(\roman{enumi})}
\newfont{\cyrr}{wncyr10}
\def\Z{\mathbf{Z}}
\def\Q{\mathbf{Q}}
\def\F{\mathbf{F}}
\def\R{\mathbf{R}}
\def\C{\mathbf{C}}
\def\Zp{\Z_p}
\def\Qp{\Q_p}
\def\Fp{\F_p}
\def\d{\mathfrak{d}}
\def\O{\mathcal{O}}
\def\FF{\mathcal{F}}
\def\HH{\mathcal{S}}
\def\DD{\mathcal{D}}
\def\II{\mathcal{I}}
\def\PP{\mathcal{P}}
\def\LL{\mathcal{L}}
\def\NN{\mathcal{N}}
\def\cS{\mathcal{S}}
\def\X{\mathcal{X}}
\def\cH{\mathcal{H}}
\def\a{\mathfrak{a}}
\def\b{\mathfrak{b}}
\def\c{\mathfrak{c}}
\def\m{\mathbf{m}}
\def\n{\mathfrak{n}}
\def\l{\mathfrak{q}}
\def\el{\mathfrak{l}}
\def\p{\mathfrak{p}}
\def\mm{\mathfrak{m}}
\def\k{\Bbbk}
\def\Hom{\mathrm{Hom}}
\def\Gal{\mathrm{Gal}}
\def\Aut{\mathrm{Aut}}
\def\Cor{\mathrm{Cor}}
\def\unr{\mathrm{ur}}
\def\ord{\mathrm{ord}}
\def\loc{\mathrm{loc}}
\def\dv{\mathrm{div}}
\def\image{\mathrm{image}}
\def\rank{\mathrm{rank}}
\def\corank{\mathrm{corank}}
\def\length{\mathrm{length}}
\def\Fr{\mathrm{Fr}}
\def\Frl{\Fr_{\l}}
\def\tr{{\mathrm{tr}}}
\def\GL{{\mathrm{GL}}}
\def\GSp{{\mathrm{GSp}}}
\def\Sel{{\mathrm{Sel}}}
\def\Maps{{\mathrm{Maps}}}
\def\Cl{\mathrm{Cl}}
\def\sat{\mathrm{sat}}
\def\ru{\mathrm{u}}
\def\f{\mathrm{f}}
\def\HS#1{H^1_{#1}}
\def\Hf{\HS{\f}}
\def\Ht{\HS{\tr}}
\def\HF{\HS{\FF}}
\def\HFs{\HS{\FF^*}}
\def\HFfn{\HS{\FFf{\n}}}
\def\HFfnl{\HS{\FFf{\n\l}}}
\def\HFfns{\HS{\FFf{\n}^*}}
\def\HFfnls{\HS{\FFf{\n\l}^*}}
\def\SSsrf#1#2#3#4{#1_{#2}^{#3}(#4)}
\def\Fsrf#1#2#3{\SSsrf{\FF}{#1}{#2}{#3}}
\def\FFf#1{\Fsrf{}{}{#1}}
\def\shmap#1#2{\psi_{#1}^{#2}}
\def\pve{\shmap{v}{e}}
\def\pvpe{\shmap{v'}{e}}
\def\pne{\shmap{\n}{e}}
\def\pnle{\shmap{\n\l}{e}}
\def\map#1{\;\xrightarrow{#1}\;}
\def\isom{\map{\sim}}
\def\isomm{\map{\;\sim\;}}
\def\bmu{\boldsymbol\mu}
\def\ld{{\mathcal h}}
\def\rd{{\mathcal i}}
\def\too{\longrightarrow}
\def\hookto{\hookrightarrow}
\def\onto{\twoheadrightarrow}
\def\Il{I_\l}
\def\In{I_\n}
\def\Inl{I_{\n\l}}
\def\fs{\phi^{\mathrm{fs}}}
\def\fsl{\fs_{\l}}
\def\ssys{\boldsymbol{\epsilon}}
\def\ksys{\boldsymbol{\kappa}}
\def\KS{\mathbf{KS}}
\def\SS{\mathbf{SS}}
\def\PPk{\PP_k}
\def\NNk{\NN_k}
\def\pair#1#2{\ld #1, #2 \rd}
\def\pairl#1#2{{\pair{#1}{#2}}_\l}
\def\mesh{\lambda}
\def\smesh{\mu}
\def\dirsum#1{\underset{#1}{\textstyle\bigoplus}}
\def\tensor#1{\underset{#1}{\textstyle\bigotimes}}
\def\Tb{{\bar{T}}}
\def\Gl{G_\l}
\def\Gn{G_\n}
\def\Gnl{G_{\n\l}}
\def\cm{\chi}
\def\der#1{\partial^{(#1)}}
\def\FFc{\FF_{\mathrm{ur}}}
\def\teich{\omega}
\def\psiK1{\psi_1}
\def\ssyi#1{\ssys^{(#1)}}
\def\kapi#1{\epsilon^{(#1)}}
\def\ordv#1{\ord(#1)}
\def\Kb{{\bar{K}}}
\def\Kl{K_\l}
\def\Klb{\shortoverline{\Kl}}
\def\W#1{W_{#1}}
\def\Y#1{Y_{#1}}
\def\divity{\varphi}
\def\invlim#1{{\underset{{\raise .5ex\hbox{$\scriptscriptstyle #1$}}}{\varprojlim}}\,}
\def\dirlim#1{{\underset{{\raise .5ex\hbox{$\scriptscriptstyle #1$}}}{\varinjlim}}\,}
\def\shortoverline#1{\hskip 2pt \overline{\hskip -2pt #1 \hskip -2pt} \hskip 2pt}
\def\dual#1{#1^{\bullet}}
\begin{document}

\title[Controlling Selmer groups in the higher core rank case]{Controlling 
   Selmer groups in the \\ higher core rank case}

\author{Barry Mazur}
\address{Department of Mathematics,
Harvard University,
Cambridge, MA 02138 USA}
\email{mazur@math.harvard.edu}

\author{Karl Rubin}
\address{Department of Mathematics,
UC Irvine,
Irvine, CA 92697 USA}
\email{krubin@math.uci.edu}

\date{\today}

\subjclass[2010]{Primary 11G40, 11F80; Secondary 11R23, 11R34}

\thanks{This material is based upon work supported by the 
National Science Foundation under grants DMS-1302409 and DMS-1065904.}

\begin{abstract}
We define Kolyvagin systems and Stark systems attached to $p$-adic representations 
in the case of arbitrary ``core rank'' (the core rank is a measure of 
the generic Selmer rank in a family of Selmer groups).  Previous work dealt only with the 
case of core rank one, where the Kolyvagin and Stark systems are collections of cohomology 
classes.  For general core rank, they are collections of elements of exterior powers 
of cohomology groups.  We show under mild hypotheses that for general core rank these 
systems still control the size and structure of Selmer groups, and that the 
module of all Kolyvagin (or Stark) systems is free of rank one.
\end{abstract}

\maketitle

\tableofcontents

\section*{Introduction}

Let $K$ be a number field and $G_K:= {\rm Gal}({\bar K}/K)$ its Galois group. 
Let $R$ be either a principal artinian local ring, or a discrete valuation ring, 
and $T$ an $R[G_K]$-module that 
is free over $R$ of finite rank. Let  $T^*:= \Hom(T,\bmu_\infty)$ be its 
Cartier dual. 

A cohomology class  $c$  in $H^1(G_K,T)$ 
provides (after localization and cup-product)  a linear functional $\LL_{c,v}$ 
on $H^1(G_{K_v},T^*)$ 
for any place $v$ of $K$. Thanks to the duality theorems of class field theory, these 
$\LL_{c,v}$, when summed over all places $v$ of $K$, give a linear functional 
$\LL_{c}$ that annihilates the adelic image of $H^1(G_{K},T^*)$.  
By imposing local conditions on the class $c$, we get a linear functional 
that annihilates a Selmer group in $H^1(G_{K},T^*)$.  
Following this thread, a systematic construction of classes $c$ can 
be of used to control the size of Selmer groups. Even better, a sufficiently 
full collection (a {\em system})  of classes $c$ can sometimes be used to completely 
determine the structure of the relevant Selmer groups. 

We have just described  a very vague outline of the strategy of controlling 
Selmer groups of Galois representations $T^*$, by systems of cohomology classes 
for $T$. 
In practice there are variants of this strategy.  First, we will control the local conditions 
that we impose on our cohomology classes.  That is, we will require our classes 
to lie in certain Selmer groups for $T$.
But more importantly, in general one encounters situations where sufficiently many 
of the relevant Selmer groups for $T$ are free over $R$ of some (fixed) rank $r \ge 1$.  
We call $r$ the {\em core rank} of $T$; see Definition \ref{crd} below.  
In the natural cases that we consider, all relevant Selmer groups contain a 
free module of rank equal to the core rank $r$, and $r$ is maximal with respect 
to this property.

If $R$ is a discrete valuation ring and 
our initial local conditions are what we call {\em unramified} 
(see Definition \ref{css} and Theorem \ref{dvrdm}), 
then under mild hypotheses the core rank $r$ of $T$ is given by the simple formula 
$$
r = \sum_{v\mid\infty} \corank\, H^0(G_{K_v}, T^*).
$$  
So, for example, if $T$ is the $p$-adic Tate module of 
an abelian variety of dimension $d$ over $K$, then the  
core rank is $d\,[K:{\Q}]$.

To deal with the case where $r$ is greater than $1$ we will ask for elements in the 
$r$-th exterior powers (over $R$) of those Selmer groups, so that for every $r$
we will be seeking systems of classes in $R$-modules that are often free of rank 
one over $R$.

One of the main aims of this article is to extend the more established 
theory of core rank $r=1$ (see for example \cite{kolysys}) to the case of 
higher core rank. We deal with two types of systems of cohomology classes: 
%{\em Euler systems}, 
{\em Stark systems} (collections of classes generalizing the 
units predicted by Stark-type conjectures) 
and {\em Kolyvagin systems} (generalizing Kolyvagin's original formulation). 
Our Stark systems are similar to the ``unit systems''
that occur in the recent work of Sano \cite{sano}.
There is a third type, {\em Euler systems} (see for example \cite{pr.es} or \cite{EulerSystems}), 
which we do not deal with in this paper.
When $r=1$, Euler systems provide the crucial link (\cite[Theorem 3.2.4]{kolysys}) 
between Kolyvagin or Stark systems and $L$-values.  We expect that when $r>1$ 
there is still a connection between Euler systems on the one hand, and 
Stark and Kolyvagin systems on the other, but this connection is still mysterious.
For an example of the sort of connection that we expect, see the forthcoming paper 
\cite{gen.darmon}.

%Our definition of Euler systems is not quite the same as, but 
%not radically different from, that of Perrin-Riou \cite{pr.es}; 
%our Stark systems are similar to the ``unit systems''
%that occur in the recent work of Sano \cite{sano}.

The Euler systems that have been already constructed in the literature, 
or that are conjectured to exist, are motivic: they come from arithmetic objects 
such as  circular units or more generally the conjectural Stark units; 
or---in another context---Heegner points; or elements of $K$-theory.  
Euler systems  are `vertically configured' in the sense that they provide 
classes in many abelian extensions of the base number field, and the classes cohere 
via norm projection from one abelian extension to a smaller one when modified 
by the multiplication of appropriate `Euler factors' (hence the terminology `Euler system'). 

On the other hand, the Stark and Kolyvagin systems 
are `horizontally configured'  in the sense that they consist only 
of cohomology classes over the base number field, but conform to a 
range of local conditions. 
The local conditions for Stark systems are more elementary 
and---correspondingly---the Stark systems are somewhat easier to handle 
than Kolyvagin systems. In contrast, the local conditions for Kolyvagin 
systems connect more directly with the changes of local conditions that 
arise from twisting the Galois representation $T$ by characters. 
 
One of the main results of this paper (Theorem \ref{mthm3'}) is that---under suitable 
hypotheses, but for general core rank---there is an equivalence 
between Stark systems and special Kolyvagin systems that we call 
{\em stub Kolyvagin systems}, and, up to a scalar unit, there is a 
unique `best' Stark (equivalently: stub Kolyvagin) system 
(Theorems \ref{magic} and \ref{rankdvr}). We show, 
as mentioned in the title of this article, that the corresponding Selmer 
modules are controlled by (either of) these systems 
(Theorems \ref{korank} and \ref{korankk}), 
in the sense that there is a relatively simple 
description of the elementary divisors (and hence the isomorphy type) 
of the Selmer group of $T$ starting with any Stark or stub Kolyvagin system.  
%We conjecture for 
%arbitrary core rank (Conjecture \ref{esksconj}) that every Euler system gives rise to
%a Stark system in a precise sense (Definition \ref{derdef}) 
%and consequently also to a stub Kolyvagin system. 
%We prove this conjecture (Theorem \ref{12.8}) using results of \cite{kolysys} 
%in the case of core rank one.  
When the core rank is one, every Kolyvagin system is a stub Kolyvagin 
system \cite[Theorem 4.4.1]{kolysys}.

%One curious aspect of this picture is that there is a relatively simple 
%description of the elementary divisors (and hence the isomorphy type) 
%of the Selmer group of $T$ starting with the Stark (or stub Kolyvagin) 
%system (Theorems \ref{korank} and \ref{korankk}), 
%but there is no known way of computing those elementary divisors 
%directly from the Euler system.
 
%In \cite{pr.ast,pr.es}, Perrin-Riou developed a (largely conjectural) connection between 
%Euler systems and $p$-adic $L$-functions.
%Starting with Euler system classes in 
%the $r$-th exterior powers of global cohomology groups, one can localize to obtain 
%elements in the $r$-th exterior powers of the corresponding local cohomology, and 
%thence map (by Perrin-Riou's `logarithme \'elargi', which generalized work of Coleman) 
%to a space that contains the (conjectural) $p$-adic $L$-function in question.  
%In the classical set-up, for elliptic curves over $\Q$ 
%(where the core rank $r = 1$), this bridge between classes (`zeta-elements' in 
%Kato's terminology) and the $p$-adic $L$-function of the elliptic curve was 
%achieved by Kato \cite{kato}. 
  
Although we have restricted our scalar rings $R$ to be either principal 
artinian local rings or complete discrete valuation rings 
with finite residue field, it is natural to wish to extend the format of 
our systems of cohomology classes to encompass Galois representations $T$ 
that are free of finite rank over more general complete local rings, so as 
to be able to deal effectively with deformational questions. 
%In particular this would allow us to relate Euler systems (and conjecturally, 
%$p$-adic $L$-functions) to the structure of appropriate Selmer groups.  It would be 
%interesting to have such a general theory, especially in view of the 
%currently expanding technology for the construction of $p$-adic $L$-functions 
%that vary rigid-analytically  over large pieces of eigenvarieties, and of 
%Selmer modules related to variations of Galois representations (see 
%forthcoming works of Pottharst \cite{pottharst} and Hansen \cite{hansen}).

\subsection*{Layout of the paper}
In Part \ref{part1} (sections \ref{lcg}--\ref{esexs}) we recall basic facts that we will 
need about local and global cohomology groups, and define our abstract 
Selmer groups and the core rank.
In Part \ref{part2} (sections \ref{Ssec}--\ref{extra}) 
we define Stark systems and investigate the relations 
between Stark systems %, Euler systems, 
and the structure of Selmer groups.  
%In \S\ref{essnkss} we state (Theorem \ref{12.8}) 
%that our conjecture relating Euler systems and Stark systems 
%(Conjecture \ref{esksconj}) holds when the core rank is one.  The proof 
%of this theorem relies on Kolyvagin systems, so we postpone it until \S\ref{m1pf}.  
%Some examples are discussed in \S\ref{esexs}, and 
Part \ref{part3} 
(sections \ref{sog}--\ref{m0pf}) deals with Kolyvagin systems, and the relation between 
Kolyvagin systems and Stark systems.

The results of \cite{kolysys} were restricted to the case where the base field $K$ is $\Q$.  
In many cases the proofs for general $K$ are the same, and in those cases we will 
feel free to use results from \cite{kolysys} without further comment.

\subsection*{Notation}

Fix a rational prime $p$.  Throughout this paper, $R$ will denote a complete,
noetherian, local principal ideal domain with finite residue field of characteristic $p$.  
Let $\m$ denote the maximal ideal of $R$. 
The basic cases to keep in mind are $R = \Z/p^n\Z$ or $R = \Zp$.

If $K$ is a field, $\Kb$ will denote a fixed separable closure of $K$ and 
$G_K := \Gal(\Kb/K)$.
If $A$ is an $R$-module and $I$ is an ideal of $R$, we will write 
$A[I]$ for the submodule of $A$ killed by $I$.  If $A$ is a $G_K$-module, 
we write $K(A)$ for the fixed field in $\Kb$ of the kernel of the map
$G_K \to \Aut(A)$. 

If a group $H$ acts on a set $X$, then the subset of
elements of $X$ fixed by $H$ is denoted $X^H$.

If $n$ is a positive integer, $\bmu_n$ will denote the group of 
$n$-th roots of unity in $\Kb$.

\part{Cohomology groups and Selmer structures}
\label{part1}

\section{Local cohomology groups}
\label{lcg}

For this section
$K$ will be a local field (archimedean or nonarchimedean).  
If $K$ is nonarchimedean let
$\O$ be the ring of integers in $K$, $\F$ its residue field,
$K^{\unr}\subset {\bar K}$ the maximal unramified subfield of ${\bar K}$, and 
$\II$ the inertia group $\Gal({\bar K}/K^{\unr})$, so 
$G_\F = G_K/\II = \Gal(K^{\unr}/K)$.  

Fix an $R$-module $T$ endowed with a continuous
$G_K$-action. By $H^*(K,T) := H^*(G_K,T)$ we mean cohomology computed 
with respect to continuous cochains.  

\begin{defn}
\label{locconddef}
A {\em local condition} on $T$ (over $K$) is a choice of an $R$-sub\-module 
of $H^1(K,T)$.  If we refer to the local condition by a symbol, say $\FF$, 
we will denote the corresponding $R$-submodule
$
\HF(K,T) \subset H^1(K,T).
$ 

If $I$ is an ideal of $R$, then a local condition on 
$T$ induces local conditions on $T/IT$ and $T[I]$
by taking $\HF(K,T/IT)$ and $\HF(K,T[I])$ to be the 
image and inverse image, respectively, of $\HF(K,T)$ 
under the maps induced by
$$
T \onto T/IT, \qquad T[I] \hookto T.
$$
One can similarly propagate the local condition $\FF$ canonically to arbitrary 
subquotients of $T$, and 
if $R \to R'$ is a homomorphism of complete noetherian local 
PID's, then $\FF$ induces a local condition on the $R'$-module 
$T \otimes_R R'$.
\end{defn}

\begin{defn}
\label{conds}
Suppose $K$ is nonarchimedean and $T$ is unramified (i.e., $\II$ acts trivially on $T$).
Define the {\em finite} (or {\em unramified}) local condition by
$$
\Hf(K,T) := \ker\bigl[H^1(K,T) \to H^1(K^{\unr},T)\bigr] = H^1(K^\unr/K,T). 
$$

More generally, if $L$ is a Galois extension of $K$ we define the {\em $L$-transverse} 
local condition by
$$
\HS{\text{$L$-$\tr$}}(K,T) := \ker\bigl[H^1(K,T) \to H^1(L,T)\bigr] = H^1(L/K,T^{G_L}).
$$
\end{defn}

Suppose for the rest of this section that the local field $K$ is nonarchimedean, 
the $R$-module $T$ is of finite type, and the action of 
$G_K$ on $T$ is unramified.

Fix a totally tamely ramified cyclic extension $L$ of $K$ such that 
$[L:K]$ annihilates $T$.  We will write simply $\Ht(K,T)$ for 
$\HS{\text{$L$-$\tr$}}(K,T) \subset H^1(K,T)$.

\begin{lem}
\label{isofunct}
\begin{enumerate}
\item
The composition 
$$
\Ht(K,T) \hookto H^1(K,T) \onto H^1(K,T)/\Hf(K,T)
$$ 
is an isomorphism, so there is a canonical splitting
$$
H^1(K,T) = \Hf(K,T) \oplus \Ht(K,T).
$$
\end{enumerate}
There are canonical functorial isomorphisms
\begin{enumerate}
\addtocounter{enumi}{1}
\item
$\Hf(K,T) \cong T/(\Fr-1)T$,
\item
$\Ht(K,T) \cong \Hom(\II,T^{\Fr=1})$, \quad
$\Ht(K,T) \otimes \Gal(L/K) \cong T^{\Fr=1}$.
\end{enumerate}
\end{lem}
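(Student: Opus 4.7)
I will prove parts (ii), (iii), and (i) in that order, each via an appropriate application of inflation-restriction.

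For (ii), apply inflation-restriction to $K^\unr/K$. Since $T$ is unramified, $T^{G_{K^\unr}} = T$; and since $\Gal(K^\unr/K) \cong \hat\Z$ has cohomological dimension $\le 1$ on torsion modules, $H^2(K^\unr/K,T) = 0$ and one obtains a short exact sequence
$$0 \to H^1(K^\unr/K, T) \to H^1(K,T) \to H^1(\II, T)^{G_\F} \to 0.$$
By definition $\Hf(K,T) = H^1(K^\unr/K,T)$, and the standard cohomology of the procyclic group $\hat\Z$ gives $H^1(\hat\Z, T) \cong T/(\Fr-1)T$, proving (ii) and simultaneously identifying $H^1(K,T)/\Hf(K,T) \cong \Hom(\II, T)^{G_\F}$, an identification I will use for (i).

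For (iii), inflation-restriction for $L/K$ gives $\Ht(K,T) = H^1(L/K, T^{G_L})$. Because $L/K$ is totally ramified, $G_L$ surjects onto $G_\F = G_K/\II$, so $T^{G_L} = T^{G_K} = T^{\Fr=1}$, and $\Gal(L/K) = G_K/G_L$ acts trivially on this submodule. Writing $n := [L:K]$, cyclic-group cohomology with trivial action gives $H^1(L/K, T^{\Fr=1}) = \Hom(\Gal(L/K), T^{\Fr=1})$, and the evaluation pairing $\Hom(\Gal(L/K), T^{\Fr=1}) \otimes \Gal(L/K) \to T^{\Fr=1}$ is an isomorphism because $\Gal(L/K)$ is cyclic of order $n$ and $T^{\Fr=1}$ is $n$-torsion. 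The surjection $\II \onto \Gal(L/K)$ induces an inflation $\Hom(\Gal(L/K), T^{\Fr=1}) \hookto \Hom(\II, T^{\Fr=1})$ which is an isomorphism because any continuous homomorphism from $\II$ to the pro-$p$, $n$-torsion module $T^{\Fr=1}$ factors through the quotient $\Zp(1)/n$ of the tame $p$-part of $\II$ (using that the residue characteristic of $K$ differs from $p$, which is forced by $n$ being a $p$-power annihilating $T$), and this quotient is canonically identified with $\Gal(L/K)$.

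Finally, for (i), it suffices to show the composition $\Ht(K,T) \hookto H^1(K,T) \onto \Hom(\II, T)^{G_\F}$ is an isomorphism. Under the identification $\Ht(K,T) = \Hom(\Gal(L/K), T^{\Fr=1})$, this composition sends $f$ to the inflated map $\II \onto \Gal(L/K) \xrightarrow{f} T^{\Fr=1} \hookto T$; this map is $G_\F$-equivariant because $f$ lands in $T^{\Fr=1}$ and because $\Gal(L/K)$ carries trivial $G_\F$-action (as $L/K$ Galois tamely ramified of degree $n$ forces $\mu_n \subset K$, equivalently $n \mid q-1$, where $q = |\F|$). Conversely, a $G_\F$-equivariant continuous $f: \II \to T$ factors through the tame $p$-part $\Zp(1) \subset \II$, and the equivariance relation combined with the cyclotomic action $\Fr x \Fr^{-1} = q \cdot x$ on $\Zp(1)$ forces the image of $f$ to lie in $T^{\Fr=q}$; since $n \mid q-1$ and $T$ is $n$-torsion, $T^{\Fr=q} = T^{\Fr=1}$, so $f$ factors through $\Zp(1)/n \cong \Gal(L/K)$ and lifts uniquely to $\Ht(K,T)$. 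The main obstacle is this equivariance analysis: $\Fr$-equivariance a priori takes values in $T^{\Fr=q}$ rather than $T^{\Fr=1}$, and reconciling the two requires the congruence $q \equiv 1 \pmod n$, which is precisely what makes $L/K$ exist as a Galois totally tamely ramified extension of degree $n$.
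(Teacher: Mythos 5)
Your proof is correct and follows the standard route: the paper itself only cites \cite{kolysys} (Lemmas 1.2.1 and 1.2.4) for these facts, and your inflation--restriction computations --- with the splitting in (i) resting on the identification $T^{\Fr=q}=T^{\Fr=1}$ forced by $q\equiv 1\pmod{[L:K]}$ --- are exactly the arguments given there. One cosmetic point: the paper assumes only that $[L:K]$ kills $T$, not that it is a $p$-power; your argument survives unchanged because only the $p$-part of $[L:K]$ and of tame inertia matters, and the inequality $\ell\neq p$ of residue characteristics still follows from $[L:K]T=0$ together with $[L:K]$ being prime to $\ell$.
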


\begin{proof}
Assertion (i) is \cite[Lemma 1.2.4]{kolysys}.
The rest is well known; see for example \cite[Lemma 1.2.1]{kolysys}.
\end{proof}

\begin{defn}
\label{fsmap}
Suppose that $T$ is free of finite rank as an $R$-module, and that 
$\det(1-\Fr \;|\; T) = 0$.
Define $P(x) \in R[x]$ by 
$$
P(x) := \det(1-\Fr\;x \;|\; T).
$$ 
Since $P(1) = 0$, there is a unique polynomial $Q(x) \in R[x]$ such that 
$$
(x-1)Q(x) = P(x) \quad \text{in $R[x]$}.
$$
By the Cayley-Hamilton theorem, $P(\Fr^{-1})$ annihilates 
$T$, so $Q(\Fr^{-1}) T \subset T^{\Fr=1}$.
We define the {\em finite-singular comparison map} $\fs$ on $T$ to be 
the composition, using the isomorphisms of Lemma \ref{isofunct}(ii,iii),
$$
\Hf(K,T) \isom T/(\Fr-1)T \map{Q(\Fr^{-1})}
    T^{\Fr=1} \isom \Ht(K,T) \otimes \Gal(L/K).
$$
\end{defn}

\begin{lem}
\label{applemma}
Suppose that $T$ is free of finite rank over $R$, and 
that $T/(\Fr-1)T$ is a free $R$-module of rank one.
Then $\det(1-\Fr \;|\; T) = 0$ and the map
$$
\fs : \Hf(K,T) \too \Ht(K,T) \otimes \Gal(L/K)
$$
of Definition \ref{fsmap} is an isomorphism.  In particular both
$\Hf(K,T)$ and $\Ht(K,T)$ are free of rank one over $R$.
\end{lem}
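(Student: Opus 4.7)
The plan is to prove the three claims in sequence: $\det(1-\Fr\mid T)=0$, freeness of $\Hf(K,T)$ and $\Ht(K,T)$ of rank one, and the isomorphism property of $\fs$.

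First I apply Smith normal form to the $R$-linear map $1-\Fr\colon T\to T$ between free $R$-modules of rank $d$. Over the PID $R$ there are bases in which $1-\Fr$ is diagonal with invariant factors $d_1\mid d_2\mid\cdots\mid d_d$, so $T/(1-\Fr)T\cong\bigoplus_i R/(d_i)$. For this cokernel to be free of rank one we must have $d_1,\ldots,d_{d-1}\in R^\times$ and $d_d=0$. In particular $\det(1-\Fr\mid T)=\prod_i d_i=0$, so $P(1)=0$ and the map $\fs$ of Definition \ref{fsmap} is defined; simultaneously, $T^{\Fr=1}=\ker(1-\Fr)$ is free of rank one. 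Lemma \ref{isofunct}(ii) then gives $\Hf(K,T)\cong T/(\Fr-1)T$, free of rank one by hypothesis. Lemma \ref{isofunct}(iii) gives $\Ht(K,T)\otimes\Gal(L/K)\cong T^{\Fr=1}$, also free of rank one; since $[L:K]$ annihilates $T$ and hence $\Ht(K,T)$, a choice of generator of the cyclic group $\Gal(L/K)$ identifies $\Ht(K,T)\otimes\Gal(L/K)$ with $\Ht(K,T)$, so $\Ht(K,T)$ is free of rank one as well.

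It remains to show $\fs$ is an isomorphism. Under the identifications of Definition \ref{fsmap}, this reduces to showing that $Q(\Fr^{-1})\colon T/(1-\Fr)T\to T^{\Fr=1}$ is an isomorphism of free rank-one $R$-modules. (The map is well-defined: from $(1-\Fr)=-\Fr(\Fr^{-1}-1)$ one gets $Q(\Fr^{-1})(1-\Fr)=-\Fr\,P(\Fr^{-1})=0$ by Cayley--Hamilton.) By Nakayama's lemma, it suffices to verify this after reducing modulo the maximal ideal. Over the residue field $\F$, decompose $\bar T$ into generalized eigenspaces of $\bar\Fr$; the one-dimensionality of $\bar T/(1-\bar\Fr)\bar T$ forces the generalized $1$-eigenspace to be a single Jordan block of some size $k\geq 1$, yielding $\bar P(x)=(1-x)^k\bar P_1(x)$ with $\bar P_1(1)\neq 0$ and $\bar Q(x)=-(1-x)^{k-1}\bar P_1(x)$. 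On each non-$1$ generalized eigenspace, $\bar P_1(\bar\Fr^{-1})$ vanishes (the relevant factor $1-\lambda\bar\Fr^{-1}$ appears to the full nilpotency order), while on the $1$-generalized eigenspace $\bar P_1(\bar\Fr^{-1})$ is invertible and $(1-\bar\Fr^{-1})^{k-1}$ sends the top of the Jordan chain to a nonzero multiple of the bottom generator, which spans $\ker(\bar\Fr-1)=\bar T^{\bar\Fr=1}$. Thus $\bar Q(\bar\Fr^{-1})\colon\bar T\to\bar T^{\bar\Fr=1}$ is surjective, and the induced map on the one-dimensional quotient $\bar T/(1-\bar\Fr)\bar T$ is the desired isomorphism.

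The main obstacle is the last step: producing a concrete nonzero value of $\bar Q(\bar\Fr^{-1})$ on a generator of $\bar T/(1-\bar\Fr)\bar T$. The Jordan-form calculation handles this, but one must keep track of how $\bar P_1(\bar\Fr^{-1})$ annihilates the non-$1$ generalized eigenspaces and interacts with $(1-\bar\Fr^{-1})^{k-1}$ on the $1$-generalized eigenspace. Everything else is formal bookkeeping with Smith normal form and invocations of Lemma \ref{isofunct}.
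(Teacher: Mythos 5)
Your proof is correct. The paper itself gives no argument here—it simply cites \cite[Lemma 1.2.3]{kolysys}—and your write-up is essentially a self-contained version of that proof: the Smith normal form step pins down $\det(1-\Fr\mid T)=0$ and the freeness of $T^{\Fr=1}$ and $T/(\Fr-1)T$, and the Nakayama reduction plus Jordan-block computation of $\bar Q(\bar\Fr^{-1})$ is the standard way to see that the finite--singular map is an isomorphism. Two small points you use implicitly but which your Smith-form setup does justify: the generalized eigenspace decomposition should be taken after extending scalars to an algebraic closure of the residue field (surjectivity can be checked there), and the Nakayama step needs the identification $T^{\Fr=1}\otimes_R\k \cong (T/\m T)^{\Fr=1}$, which follows because your basis vector $e_d$ spanning $\ker(1-\Fr)$ is part of an $R$-basis of $T$.
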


\begin{proof}
This is \cite[Lemma 1.2.3]{kolysys}.
\end{proof}

\begin{defn}
\label{locdualdef}
Define the {\em dual} of $T$ to be the $R[[G_K]]$-module
$$
T^* := \Hom(T,\bmu_{p^\infty}).
$$ 
We have the (perfect) local Tate cup product pairing
$$
\pair{~}{~} : H^1(K,T) \times H^1(K,T^*) 
    \too H^2(K,\bmu_{p^\infty}) \isomm \Qp/\Zp.
$$

A local condition $\FF$ for $T$ determines a local condition $\FF^*$
for $T^*$, by taking $\HFs(K,T^*)$ to be the orthogonal
complement of $\HF(K,T)$ under the Tate pairing $\pair{~}{~}$.
\end{defn}

\begin{prop}
\label{h1fdual}
With notation as above, we have:
\begin{enumerate}
\item
$\Hf(K,T)$ and $\Hf(K,T^*)$ are orthogonal complements under 
$\pair{~}{~}$.
\item
$\Ht(K,T)$ and $\Ht(K,T^*)$ are orthogonal 
complements under $\pair{~}{~}$.
\end{enumerate}
\end{prop}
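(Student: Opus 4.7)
The plan is to establish both parts by the same two-step strategy: first show the cup-product vanishing $\pair{\HF(K,T)}{\HF(K,T^*)} = 0$, which gives the containment $\HF(K,T^*) \subseteq \HF(K,T)^{\perp}$, and then upgrade this to equality by a cardinality count. The count uses perfectness of local Tate duality together with the local Euler--Poincar\'e characteristic formula, after reducing to finite modules by passing to $T/\mm^n T$ if $R$ is a discrete valuation ring.

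For (i), since both $T$ and $T^*$ are unramified, classes $c \in \Hf(K,T)$ and $c^* \in \Hf(K,T^*)$ are inflated from cohomology over $\Gal(K^\unr/K)$. By functoriality of cup product, $c \cup c^*$ lies in the image of $H^2(K^\unr/K, \bmu_{p^\infty}^{G_{K^\unr}}) \to H^2(K, \bmu_{p^\infty})$. The source vanishes because $\Gal(K^\unr/K) \cong \hat\Z$ has cohomological dimension one, giving the vanishing. The counting step follows from $|\Hf(K,T)| \cdot |\Hf(K,T^*)| = |H^1(K,T)|$, which is the local Euler--Poincar\'e formula and matches the relation required for orthogonal complements under a perfect pairing.

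For (ii), combine Lemma \ref{isofunct}(i) with (i): the canonical decomposition $H^1(K,T) = \Hf \oplus \Ht$ (and likewise for $T^*$), together with the orthogonality $\Hf \perp \Hf^*$ from part (i), reduces the problem to showing $\pair{\Ht(K,T)}{\Ht(K,T^*)} = 0$ and then carrying out the analogous cardinality count. For the vanishing, both classes inflate from $H^1(L/K, T^{G_L})$ and $H^1(L/K, (T^*)^{G_L})$, so the cup product inflates from a class in $H^2(L/K, \bmu_{p^\infty}^{G_L})$. Using the identifications of Lemma \ref{isofunct}(iii) to realize the inflated cocycles as homomorphisms into $T^\Fr$ and $(T^*)^\Fr$, and exploiting the trivial action of $\Gal(L/K)$ on these $\Fr$-fixed submodules and on $\bmu_{p^\infty}^{G_L}$ (since $L/K$ is totally tamely ramified of degree coprime to the residue characteristic), one computes the cup product explicitly via the standard cocycle formula for cyclic group cohomology and verifies that it vanishes.

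The main obstacle is the cup-product vanishing in part (ii). Unlike in (i), the ambient group $H^2(L/K, \bmu_{p^\infty}^{G_L})$ does not vanish for dimensional reasons, so the argument must exploit the specific bilinear structure of cup products of two degree-one cocycles on a cyclic group, together with the hypothesis that $[L:K]$ annihilates $T$ and $T^*$. This explicit computation is the crucial step that distinguishes the transverse case from the unramified one; once it is in hand, the dimension count proceeds exactly as in (i).
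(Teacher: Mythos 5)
Your overall architecture---prove a cup-product vanishing, then upgrade the resulting inclusion to equality by counting via perfectness of Tate duality and the local Euler characteristic formula---is exactly the standard route (the paper itself only cites Milne and \cite[Lemma 1.3.2]{kolysys}). Part (i) is fine, the reduction of part (ii) to the single statement $\langle \Ht(K,T),\Ht(K,T^*)\rangle=0$ is correct, and the count works, since $\Ht(K,T)\cong\Hom(\Gal(L/K),T^{\Fr=1})\cong T^{\Fr=1}$ has the same order as $H^0(K,T)$. The gap is at the step you yourself single out as crucial and then defer: ``one computes the cup product explicitly \dots\ and verifies that it vanishes.'' That verification is the entire content of (ii), and it is not a formality. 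Carrying it out: for $f\in\Hom(\Gal(L/K),T^{G_L})$ and $g\in\Hom(\Gal(L/K),(T^*)^{G_L})$, with $\Gal(L/K)=\langle\sigma\rangle$ cyclic of order $n=[L:K]$ acting trivially, the cocycle $(\sigma^i,\sigma^j)\mapsto\langle f(\sigma^i),g(\sigma^j)\rangle=ij\,\langle f(\sigma),g(\sigma)\rangle$ has class $\binom{n}{2}\langle f(\sigma),g(\sigma)\rangle$ under the standard isomorphism $H^2(\Gal(L/K),\bmu_{p^\infty}(L))\cong\bmu_{p^\infty}(K)/\bmu_{p^\infty}(K)^n$. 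Since $n$ kills $f(\sigma)$, this vanishes when $p$ is odd because $v_p\bigl(\binom{n}{2}\bigr)=v_p(n)$; but when $p=2$ (so that $n$ is even if $T\ne 0$) one only has $v_2\bigl(\binom{n}{2}\bigr)=v_2(n)-1$, and the class can be a nonzero element of order $2$. Moreover the inflation $H^2(L/K,\bmu_{p^\infty}(L))\to H^2(K,\bmu_{p^\infty})$ is \emph{injective} (it factors through the injection $H^2(L/K,L^\times)\hookto H^2(K,\Kb^\times)$ because $\bmu_{p^\infty}(K)\cap N_{L/K}L^\times=\bmu_{p^\infty}(K)^n$ for $L/K$ totally tamely ramified), so, unlike in part (i), a nonzero class does not die after inflating.

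This is not a hypothetical worry. Take $p=2$, $K=\Q_5$, $T=\Z/4\Z$ with trivial action, and $L=\Q_5(5^{1/4})$, a cyclic totally tamely ramified extension of degree $4$ killing $T$. Then $\Ht(K,T^*)=(K^\times\cap(L^\times)^4)/(K^\times)^4$ contains the class of $5$, while $\Ht(K,T)=\Hom(\Gal(L/K),\Z/4\Z)$, and the Tate pairing is $(\chi,a)\mapsto\chi(\mathrm{rec}_K(a))$; since $N_{L/K}(5^{1/4})=-5$, the element $5$ is not a norm from $L$ and $\mathrm{rec}_K(5)=\sigma^2\ne 1$, so a generator $\chi$ pairs nontrivially with $5$. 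So the vanishing you defer genuinely requires $p>2$ (a hypothesis not made explicit in the statement here, nor in your write-up): you must actually perform the cyclic-group cup-product computation and point to where oddness of $p$ enters, rather than assert that the verification succeeds.
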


\begin{proof}
The first assertion is (for example) Theorem I.2.6 of \cite{milne}.
Both assertions are \cite[Lemma 1.3.2]{kolysys}.
\end{proof}

\section{Global cohomology groups and Selmer structures}
\label{gcg}

For the rest of this paper, $K$ will be a number field and 
$T$ will be a finitely generated free $R$-module
with a continuous action of $G_K$, that is unramified outside a finite 
set of primes.  

{\em Global notation.}
Let $\Kb \subset \C$ be the  algebraic closure of $K$ in $\C$, 
and for each prime $\l$ of $K$ fix an algebraic closure  
$\Klb$ of $\Kl$ containing $\Kb$.  This determines a choice of extension 
of $\l$ to $\Kb$.
Let $\DD_\l := \Gal(\Klb/\Kl)$, which we identify with a closed subgroup of
$G_K := \Gal(\Kb/K)$.  In other words $\DD_\l$ is a particular 
decomposition group at $\l$ in $G_K$, and $H^1(\DD_\l,T) = H^1(\Kl,T)$. 
Let $\II_\l \subset \DD_\l$ be the inertia group, and
$\Frl \in \DD_\l/\II_\l$  the Frobenius element.  If $T$
is unramified at $\l$, then
$\DD_\l/\II_\l$ acts on $T$, and hence so does $\Frl$.  
If we choose a different decomposition group at
$\l$, then the action of $\Frl$ changes by conjugation in $G_K$.
We will write $\loc_\l$ for the localization map $H^1(K,T) \to H^1(\Kl,T)$.  

If $\l$ is a prime of $K$, let $K(\l)$ denote the $p$-part of the ray class field of $K$ 
modulo $\l$ (i.e., the maximal $p$-power extension of $K$ in the ray class field), 
and $K(\l)_\l$ the completion of $K(\l)$ at the chosen prime above $\l$.  
If $\l$ is principal then $K(\l)_\l/K_\l$ is cyclic and totally tamely ramified.

If $\l$ is principal, $T$ is unramified at $\l$, and $[K(\l)_\l:K_\l]T = 0$, 
the {\em transverse sub\-module} of $H^1(\Kl,T)$ is the submodule 
$$
\Ht(\Kl,T) := \HS{\text{$K(\l)_\l$-$\tr$}}(\Kl,T) 
   = \ker\bigl[H^1(\Kl,T) \to H^1(K(\l)_\l,T)\bigr]
$$ 
of Definition \ref{conds}.

\begin{defn}
\label{selmerdef}
A {\em Selmer structure} $\FF$ on $T$ is a collection of the following data:
\begin{itemize}
\item
a finite set $\Sigma(\FF)$ of places of $K$, including all infinite places, 
all primes above $p$, and all primes where $T$ is ramified,
\item
for every $\l \in \Sigma(\FF)$ (including archimedean places), a local condition 
(in the sense of Definition \ref{locconddef}) on $T$ over $K_\l$, i.e., a choice of $R$-submodule
$
\HF(\Kl,T) \subset H^1(\Kl,T).
$
\end{itemize}

If $\FF$ is a Selmer structure, we define the 
{\em Selmer module} $\HF(K,T) \subset H^1(K,T)$ to be the kernel
of the sum of restriction maps
$$
H^1(K_{\Sigma(\FF)}/K,T) \longrightarrow
    \dirsum{\l\in\Sigma(\FF)}\left(H^1(\Kl,T)/\HF(\Kl,T)\right)
$$
where $K_{\Sigma(\FF)}$ denotes the maximal extension of $K$ that is
unramified outside $\Sigma(\FF)$.  In other words, $\HF(K,T)$ consists
of all classes which are unramified (or equivalently, finite) outside of 
$\Sigma(\FF)$ and which locally at $\l$ belong to $\HF(\Kl,T)$ for 
every $\l \in \Sigma(\FF)$.
\end{defn}

For examples of Selmer structures see \cite{kolysys}.  Note that 
if $\FF$ is a Selmer structure on $T$ and $I$ is an ideal of $R$, 
then $\FF$ induces canonically (see Definition \ref {locconddef}) 
Selmer structures on the $R/I$-modules $T/IT$ and $T[I]$, 
that we will also denote by $\FF$.

\begin{defn}
\label{pndef}
Suppose now that $T$ is free over $R$, 
$\l \nmid p\infty$ is prime, and $T$ is unramified at $\l$.  
If $\l$ is not principal, let $\Il := R$.
If $\l$ is principal, let $\Il \subset R$ be the largest power of $\m$ 
(i.e., $\m^k$ with $k \ge 0$ maximal) 
such that $[K(\l)_\l:\Kl]R \subset \Il$ and $T/((\Frl-1)T+\Il T)$ is free of rank one 
over $R/\Il$.

Let $\PP$ denote a set of prime ideals of $K$, disjoint from $\Sigma(\FF)$.
Typically $\PP$ will be a set of positive density.
Define a filtration $\PP \supset \PP_1 \supset \PP_2 \supset \cdots$ by
$$
\PPk = \{\l \in \PP : I_\l \subset \m^k\}
$$
for $k \ge 1$.  
Let $\NN := \NN(\PP)$ denote the set of squarefree products
of primes in $\PP$ (with the convention that the trivial ideal $1 \in \NN$).  
Let $I_1 := 0$ and if $\n \in \NN$, $\n \ne 1$, define
$$
\In := \sum_{\l \mid \n} \Il \subset R.
$$
\end{defn}

\begin{defn}
\label{FFabc}
Suppose $\FF$ is a Selmer structure, and $\a, \b, \n$ are pairwise relatively prime 
ideals of $K$ with $\n \in\NN$ and $\In T = 0$.
Define a new Selmer structure $\Fsrf{\a}{\b}{\n}$ by
\begin{itemize}
\item
$\Sigma(\Fsrf{\a}{\b}{\n}) := \Sigma(\FF) \cup \{\l : \l \mid \a\b\n\}$,
\item
$
\HS{\Fsrf{\a}{\b}{\n}}(\Kl,T) := \begin{cases}
\HF(\Kl,T) & \text{if $\l \in \Sigma(\FF)$},\\
0 & \text{if $\l \mid \a$}, \\
H^1(\Kl,T) & \text{if $\l \mid \b$}, \\
\Ht(\Kl,T) & \text{if $\l \mid \n$}.
\end{cases}
$
\end{itemize}
In other words, $\Fsrf{\a}{\b}{\c}$ consists of $\FF$ together with 
the strict condition at 
primes dividing $\a$, the unrestricted condition at primes dividing $\b$, 
and the transverse condition at primes dividing $\n$.
\end{defn}

If any of $\a, \b, \n$ are the trivial ideal, we may suppress them 
from the notation.  For example, we will be especially interested in 
Selmer groups of the form
\begin{align*}
\HS{\FF^\n}(K,T) &: \text{no restriction at $\l$ dividing $\n$, same as $\FF$ elsewhere,} \\
\HFfn(K,T/\In T) &: \text{transverse condition at $\l$ dividing $\n$, same as $\FF$ elsewhere.}
\end{align*}
If $\mm\mid\n \in \NN$, the definition leads to an exact sequence
\begin{equation}
\label{b}
0 \too \HS{\FF^\mm}(K,T) \too \HS{\FF^\n}(K,T) \too 
   \dirsum{\l\mid(\n/\mm)} H^1(K_\l,T)/\Hf(K_\l,T).
\end{equation}

\begin{defn}
\label{globdualdef}
The {\em dual} of $T$ is the $R[[G_K]]$-module
$
T^* := \Hom(T,\bmu_{p^\infty}).
$
For every $\l$ we have the local Tate pairing
$$
\pairl{~}{~} : H^1(\Kl,T) \times H^1(\Kl,T^*) \too \Qp/\Zp
$$
as in \S\ref{lcg}.

Just as every local condition on $T$ determines a local condition 
on $T^*$ (Definition \ref{locdualdef}), 
a Selmer structure $\FF$ for $T$ determines a Selmer structure $\FF^*$
for $T^*$.  Namely, take $\Sigma(\FF^*) := \Sigma(\FF)$, and for
$\l \in \Sigma(\FF)$ take $\HFs(\Kl,T^*)$ to be the local condition 
induced by $\FF$, i.e., the orthogonal complement of $\HF(\Kl,T)$ under $\pairl{~}{~}$.
\end{defn}

\section{Selmer structures and the core rank}
\label{notsscr}

Suppose for this section that the $R$ is a principal local ring.
We continue to assume for the rest of this paper that $T$ is free of finite rank over $R$, 
in addition to being a $G_K$-module. 

\begin{defn}
\label{cardef}
A Selmer structure $\FF$ on $T$ is is called {\em cartesian} if 
for every $\l \in \Sigma(\FF)$, the local condition $\FF$ at $\l$ 
is ``cartesian on the category of quotients of $T$'' as defined in 
\cite[Definition 1.1.4]{kolysys}.
\end{defn}

\begin{rem}
\label{5.2}
If $\FF$ is cartesian then 
for every $k$ the induced Selmer structure on the 
$R/\m^k$-module $T/\m^kT$ is cartesian.  
If $R$ is a field (i.e., $\m = 0$) then every Selmer structure on $T$ is cartesian.  
If $R$ is a discrete valuation ring and $H^1(\Kl,T)/\HF(\Kl,T)$ is torsion-free
for every $\l \in \Sigma(\FF)$, then $\FF$ is cartesian (see \cite[Lemma 3.7.1(i)]{kolysys}).
\end{rem}

\begin{prop}
\label{stprops}
Suppose $R$ is a principal artinian local ring of length $k$ (i.e., 
$\m^k = 0$ and $\m^{k-1} \ne 0$), $\FF$ is a cartesian Selmer structure on $T$, 
and $T^{G_K} = (T^*)^{G_K} = 0$.

If $\n\in\NN$ and $\In = 0$ then:
\begin{enumerate}
\item
the exact sequence 
$$
0 \too T/\m^i T \too T \too T/\m^{k-i}T \to 0
$$ 
induces an isomorphism $\HFfn(K,T/\m^i T) \isom \HFfn(K,T)[\m^i]$ 
and an exact sequence
$$
0 \too \HFfn(K,T)[\m^i] \too \HFfn(K,T) \too \HFfn(K,T/\m^{k-i}T).
$$
\item
the inclusion $T^*[\m^i] \hookto T^*$ induces an isomorphism 
$$
\HFfns(K,T^*[\m^i]) \isomm \HFfns(K,T^*)[\m^i].
$$
\item
there is a unique integer $r$, independent of $\n$, such that there is a noncanonical isomorphism
$$
\renewcommand{\arraystretch}{1.5}
\arraycolsep=2pt
\begin{array}{rcll}
\HFfn(K,T) &\cong& \HFfns(K,T^*) \oplus R^r & \text{~if $r \ge 0$}, \\
\HFfn(K,T) \oplus R^{-r} &\cong& \HFfns(K,T^*) & \text{~if $r \le 0$}.
\end{array}
$$
\end{enumerate}
\end{prop}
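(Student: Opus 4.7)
The plan is to derive (i) and (ii) from cohomological long exact sequences that exploit the cartesian hypothesis on $\FF$, and then combine these with Poitou--Tate global duality and the structure theory of modules over the artinian PID $R$ to deduce (iii). The argument closely follows the $K=\Q$ treatment in \cite[\S3.2]{kolysys}.

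For (i), the key input is the short exact sequence of $G_K$-modules
\[
0 \too T/\m^iT \xrightarrow{\;\pi^{k-i}\;} T \too T/\m^{k-i}T \too 0,
\]
where $\pi$ is a uniformizer. Since $T$ is free over $R$, the hypothesis $T^{G_K}=0$ propagates to $(T/\m^jT)^{G_K}=0$ for every $j\le k$: a nonzero $\bar t\in(T/\m T)^{G_K}$ would have a lift $t\in T$ with $\pi^{k-1}t\in T^{G_K}\setminus\{0\}$, contradicting the hypothesis, and the higher filtration quotients follow by induction. Hence the obstructing $H^0$ terms vanish and the long exact sequence identifies $H^1(K,T/\m^iT)$ with $H^1(K,T)[\m^i]$. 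The cartesian property of $\FF$ matches the local conditions as $\HF(\Kl,T/\m^iT)\isom\HF(\Kl,T)[\m^i]$ at each $\l\in\Sigma(\FF)$, and the analogous matching at transverse primes $\l\mid\n$ follows from Lemma \ref{applemma} (both $\Hf(\Kl,T)$ and $\Ht(\Kl,T)$ are free of rank one). Threading the Selmer conditions through the induced diagram produces the claimed isomorphism, and the four-term sequence comes from the right half of the same long exact sequence.

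For (ii), I would run the parallel argument on the Cartier dual using
\[
0 \too T^*[\m^i] \too T^* \too T^*/T^*[\m^i] \too 0.
\]
The identification $T^*/T^*[\m^i]\cong(T/\m^{k-i}T)^*=T^*[\m^{k-i}]$ as $G_K$-modules shows $(T^*/T^*[\m^i])^{G_K}=0$ via the hypothesis $(T^*)^{G_K}=0$, so the long exact sequence injects $H^1(K,T^*[\m^i])$ onto $H^1(K,T^*)[\m^i]$. The cartesian property for $\FF^*$ (induced from that of $\FF$ via local Tate duality and the orthogonal-complement definition of $\FF^*$) gives the required matching of local conditions, producing the isomorphism $\HFfns(K,T^*[\m^i])\isom\HFfns(K,T^*)[\m^i]$.

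For (iii), I would proceed in two stages. First, the Wiles/Poitou--Tate global Euler characteristic formula computes $|\HFfn(K,T)|/|\HFfns(K,T^*)|$ as a product of local Euler factors. The hypotheses $T^{G_K}=(T^*)^{G_K}=0$ kill the global contributions, and at each prime $\l\mid\n$ the factor for the transverse condition equals that for the finite condition (by Lemma \ref{applemma} and Proposition \ref{h1fdual}, $|\Hf(\Kl,T)|=|\Ht(\Kl,T)|$). Hence the ratio is independent of $\n$ and equals $|R|^r$ for an integer $r$ whose sign records which of the two modules is larger. Running the same argument over $R/\m^i$ for the pair $(T/\m^iT,T^*[\m^i])$ gives $|\HFfn(K,T/\m^iT)|/|\HFfns(K,T^*[\m^i])|=|R/\m^i|^r$ with the same $r$, because the cartesian property forces the local Euler-factor contributions to commute with $\m^i$-reduction. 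Second, combine this with parts (i) and (ii) and the structure theorem for finite modules over the artinian PID $R$: the elementary-divisor type of such a module $M$ is determined by the sequence $\dim_{R/\m}M[\m^i]$ for $1\le i\le k$. Parts (i) and (ii) identify $\HFfn(K,T)[\m^i]$ with $\HFfn(K,T/\m^iT)$ and $\HFfns(K,T^*)[\m^i]$ with $\HFfns(K,T^*[\m^i])$, so the level-$i$ order formula gives $\dim_{R/\m}\HFfn(K,T)[\m^i]-\dim_{R/\m}\HFfns(K,T^*)[\m^i]=r$ for every $i$. This forces the nonzero elementary divisors of the two modules to coincide, with $|r|$ extra full-length copies of $R$ on the side indicated by the sign of $r$, yielding the decomposition in (iii). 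The main obstacle I expect is the level-by-level compatibility in the first stage---verifying that the cartesian hypothesis really makes $r$ independent of $i$; once that is in hand, the global-duality bookkeeping and the elementary-divisor extraction are routine.
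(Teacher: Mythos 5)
The paper offers no argument here: all three parts are quoted verbatim from \cite{kolysys} (Lemmas 3.5.4 and 3.5.3 and Theorem 4.1.5), so you are reconstructing the cited proofs. Your reconstructions of (i) and (ii) are sound: the d\'evissage $T^{G_K}=0\Rightarrow (T/\m^jT)^{G_K}=0$, the identification of the image of $H^1(K,T[\m^i])\to H^1(K,T)$ with $H^1(K,T)[\m^i]$, and the use of the cartesian hypothesis to reconcile the image-type induced conditions on $T/\m^iT$ with the preimage-type conditions on $T[\m^i]$ are exactly the content of those two lemmas. (Part (ii) is in fact easier than you make it: since the induced condition on the submodule $T^*[\m^i]$ is by definition a preimage, surjectivity onto $\HFfns(K,T^*)[\m^i]$ is nearly formal.)

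The gap is in (iii), precisely where you flag your own doubt. You need the global Euler-characteristic ratio at level $i$ to be $(\#(R/\m^i))^{r}$ with $r$ independent of $i$, and you justify this by asserting that the cartesian property makes each local Euler factor ``commute with $\m^i$-reduction.'' That is false factor by factor: the factor at a finite $v\in\Sigma(\FF)$ is $\#\HF(K_v,T/\m^iT)/\#H^0(K_v,T/\m^iT)$, and $\length(H^0(K_v,T/\m^iT))=\length(H^0(K_v,T)[\m^i])$ is a concave, not linear, function of $i$ as soon as $H^0(K_v,T)$ has a cyclic summand of length strictly between $0$ and $k$ --- which the hypotheses do not exclude at ramified primes or primes above $p$ (only the \emph{global} $H^0$'s vanish, and cartesianness constrains $\HF(K_v,-)$, not $H^0(K_v,-)$). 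What actually has to be proved is that the nonlinearity of $\length\HF(K_v,T/\m^iT)$ exactly cancels that of $\length H^0(K_v,T/\m^iT)$, i.e.\ that their difference is linear in $i$; this is the technical heart of \cite[Theorem 4.1.5]{kolysys}, it is where the cartesian hypothesis genuinely enters, and it cannot be waved through. Once that linearity is granted, the rest of your part (iii) is fine: the transverse local factors at $\l\mid\n$ are trivial (so $r$ is independent of $\n$), and the elementary-divisor bookkeeping via $i\mapsto\length(M[\m^i])$ does force $M\cong N\oplus R^{r}$ --- though note the level-$i$ identity you want is $\length(\HFfn(K,T)[\m^i])-\length(\HFfns(K,T^*)[\m^i])=ri$ (lengths, with right-hand side $ri$, not $r$).
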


\begin{proof}
These assertions are \cite[Lemma 3.5.4]{kolysys},  \cite[Lemma 3.5.3]{kolysys}, and 
\cite[Theorem 4.1.5]{kolysys}, respectively.
\end{proof}

\begin{defn}
\label{crd}
Suppose $\FF$ is a cartesian Selmer structure on $T$.  If $R$ is artinian, then 
the {\em core rank} of $(T,\FF)$ is the integer $r$ of Proposition \ref{stprops}(iii).  
If $R$ is a discrete valuation ring, then the core rank of $(T,\FF)$ is 
the core rank of $(T/\m^k T,\FF)$ for 
every $k > 0$, which by Proposition \ref{stprops} is independent of $k$.

We will denote the core rank by $\cm(T,\FF)$, or simply $\cm(T)$ when $\FF$ is understood.
\end{defn}

For $\n\in\NN$, let $\nu(\n)$ denote the number of primes dividing $\n$.

\begin{cor}
\label{threefive} 
Suppose $R$ is artinian, $\cm(T) \ge 0$, $\n\in\NN$, and $\In = 0$.  
Let $\lambda(\n) := \length(\HS{(\FF(\n)^*}(K,T^*))$ and 
$\mu(\n) := \length(\HS{(\FF^*)_{\n}}(K,T^*))$.  
There are noncanonical isomorphisms
\begin{enumerate}
\item
$\HFfn(K,T) \cong \HFfns(K,T^*) \oplus R^{\cm(T)}$,
\item
$\HS{\FF^\n}(K,T) \cong \HS{(\FF^*)_{\n}}(K,T^*) \oplus R^{\cm(T)+\nu(\n)}$, 
\item
$\m^{\lambda(\n)}\wedge^{\cm(T)}\HFfn(K,T) \cong \m^{\lambda(\n)}$, 
\item
$\m^{\mu(\n)}\wedge^{\cm(T)+\nu(\n)}\HS{\FF^\n}(K,T) \cong \m^{\mu(\n)}$.
\end{enumerate}
\end{cor}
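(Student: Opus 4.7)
Statement (i) is the direct specialization of Proposition \ref{stprops}(iii) to the cartesian Selmer structure $\FFf{\n}$, whose core rank equals $\cm(T)$ by the ``independent of $\n$'' clause of that proposition.

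For (ii) I would first verify that the Selmer structure $\FF^\n$ is itself cartesian: at primes $\l\mid\n$ the local condition is the whole of $H^1(\Kl,T)$, which propagates trivially to subquotients, while at $\l\in\Sigma(\FF)$ it is inherited from the cartesian structure $\FF$. A fresh application of Proposition \ref{stprops}(iii) to $\FF^\n$ (with trivial auxiliary ideal) then gives
$$\HS{\FF^\n}(K,T) \;\cong\; \HS{(\FF^*)_{\n}}(K,T^*) \oplus R^{r'}$$
for some $r' = \cm(T,\FF^\n)\ge 0$, after identifying $(\FF^\n)^* = (\FF^*)_\n$. To pin down $r' = \cm(T)+\nu(\n)$, I would extend \eqref{b} in the case $\mm=1$ via Poitou--Tate global duality to the five-term exact sequence
\begin{equation*}
0 \to \HF(K,T) \to \HS{\FF^\n}(K,T) \to \bigoplus_{\l\mid\n} H^1(\Kl,T)/\Hf(\Kl,T) \to \HFs(K,T^*)^\vee \to \HS{(\FF^*)_\n}(K,T^*)^\vee \to 0.
\end{equation*}
The hypothesis $\In=0$ forces each $\Il=0$, so Lemma \ref{applemma} and the splitting of Lemma \ref{isofunct}(i) identify each quotient $H^1(\Kl,T)/\Hf(\Kl,T)$ with $\Ht(\Kl,T)\cong R$. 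Taking alternating sums of $R$-lengths, substituting (i) at $\n=1$, and using $\length(M^\vee)=\length(M)$ for finitely generated $R$-modules $M$, I would read off $r'=\cm(T)+\nu(\n)$.

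For (iii) and (iv), using (i) respectively (ii), write $\HFfn(K,T) \cong M\oplus R^{\cm(T)}$ with $M := \HFfns(K,T^*)$ of length $\lambda(\n)$, respectively $\HS{\FF^\n}(K,T) \cong M'\oplus R^{\cm(T)+\nu(\n)}$ with $M':=\HS{(\FF^*)_\n}(K,T^*)$ of length $\mu(\n)$. Then expand the top exterior power via $\wedge^r(A\oplus B) = \bigoplus_{i+j=r}\wedge^i A\otimes\wedge^j B$. In each case the unique free-of-rank-one summand comes from $(i,j)=(0,r)$, namely $\wedge^r R^r \cong R$; every other summand is a quotient of some tensor power of $M$ (or $M'$), hence is annihilated by $\m^{\lambda(\n)}$ (resp.\ $\m^{\mu(\n)}$), since the exponent of a finitely generated $R$-module is at most its length. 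Multiplying by $\m^{\lambda(\n)}$ (resp.\ $\m^{\mu(\n)}$) therefore kills every summand except the free one, leaving $\m^{\lambda(\n)}\cdot R = \m^{\lambda(\n)}$ (resp.\ $\m^{\mu(\n)}$).

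The main obstacle I anticipate is extending \eqref{b} to the full five-term sequence in (ii): although this is a standard consequence of the nine-term Poitou--Tate sequence under the vanishing $T^{G_K}=(T^*)^{G_K}=0$ in force throughout this section, it is the one non-formal input. Everything else is elementary length bookkeeping and a single exterior-algebra identity for direct sums.
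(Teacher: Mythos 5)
Your proposal is correct and follows essentially the same route as the paper: (i) is Proposition \ref{stprops}(iii), (ii) combines the cartesian property of $\FF^\n$ with a second application of Proposition \ref{stprops}(iii) and a Poitou--Tate length count to get $\cm(T,\FF^\n)=\cm(T)+\nu(\n)$, and (iii)--(iv) are the direct-sum exterior-power computation. The paper merely compresses the last two steps ("follows without difficulty from Poitou--Tate global duality"; "follow directly"), and your expanded bookkeeping matches what is intended.
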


\begin{proof}
The first isomorphism is just Proposition \ref{stprops}(iii).  For (ii), 
observe that the Selmer structure $\FF^\n$ is 
cartesian by \cite[Lemma 3.7.1(i)]{kolysys}, so 
applying Proposition \ref{stprops}(iii) to $(T,\FF^\n)$ we have 
$
\HS{\FF^\n}(K,T) \cong \HS{(\FF^*)_{\n}}(K,T^*) \oplus R^{\cm(T,\FF^\n)}.
$
To complete the proof of (ii) 
we need only show that $\cm(T,\FF^\n) = \cm(T)+\nu(\n)$, 
and this follows without difficulty from Poitou-Tate global duality 
(see for example \cite[Theorem 2.3.4]{kolysys}).

Assertions (iii) and (iv) follow directly from (i) and (ii), respectively.
\end{proof}

\section{Running hypotheses}
\label{sscr}

\begin{defn}
\label{sddef}
By {\em Selmer data} we mean a tuple $(T,\FF,\PP,r)$ where 
\begin{itemize}
\item
$T$ is a $G_K$-module, free of finite rank over $R$, 
unramified outside finitely many primes, 
\item
$\FF$ is a Selmer structure on $T$,
\item
$\PP$ is a set of primes of $K$ disjoint from $\Sigma(\FF)$,
\item
$r \ge 1$.
\end{itemize}
\end{defn}

\begin{defn}
\label{mide}
If $L$ is a finite Galois extension of $K$ and $\tau \in G_K$, define 
\begin{multline*}
\PP(L,\tau) := \{\text{primes $\l \notin \Sigma(\FF)$ : $\l$ is unramified in $L/K$} \\ 
    \text{and $\Frl$ is conjugate to $\tau$ in $\Gal(L/K)$}\}.
\end{multline*}
\end{defn}

Fix Selmer data $(T,\FF,\PP,r)$ as in Definition \ref{sddef}.  Let $\Tb = T/\m T$, so $\Tb^* = T^*[\m]$.
If $R$ is artinian, let $M$ denote the smallest power of $p$ such that $MR = 0$.  If $R$ is a discrete 
valuation ring, let $M:= p^\infty$.
Let $\cH$ denote the Hilbert class field of $K$, and $\cH_M := \cH(\bmu_M,(\O_K^\times)^{1/M})$.
Let $\k$ denote the residue field $R/\m$.
In order to obtain the strongest results, we will usually make the following additional assumptions.

{\renewcommand{\labelenumi}{\theenumi}
\renewcommand{\theenumi}{(H.\arabic{enumi})}
\begin{enumerate}
\item
\label{h.1}
$\Tb^{G_K} = (\Tb^*)^{G_K} = 0$ and 
$\Tb$ is an absolutely irreducible $\k[[G_K]]$-module, 
\item
\label{h.2} 
there is a $\tau \in \Gal(\Kb/\cH_M)$ and a finite Galois extension $L$ of $K$ in $\cH_M$
such that $T/(\tau-1)T$ is free of rank one over $R$ and $\PP(L,\tau) \subset \PP$,
\item
\label{h.3}
$H^1(\cH_M(T)/K,T/\m T) = H^1(\cH_M(T)/K,T^*[\m]) = 0$,
\item
\label{h.4}
either $\Tb \not\cong \Tb^*$ as $\k[[G_K]]$-modules, or $p > 3$,
\item
\label{h.5}
the Selmer structure $\FF$ is cartesian (Definition \ref{cardef}),
\item
\label{h.6}
$r = \cm(T) > 0$, where $\cm(T)$ is the core rank of $T$.
\end{enumerate}
(Only) when $R$ is artinian, we will also sometimes assume 
\begin{enumerate}
\addtocounter{enumi}{6}
\item
\label{h.7}
$\Il = 0$ for every $\l \in \PP$.
\end{enumerate}
%and the following assumption will play a role in Conjecture \ref{esksconj}:
%\begin{enumerate}
%\addtocounter{enumi}{7}
%\item
%\label{h.8}
%there is a $\gamma \in \Gal(\Kb/\cH_{p^\infty})$ such that $T^{\gamma=1} = 0$.
%\end{enumerate}
}

\begin{rem}
\label{5.7}
Note that if the above properties hold for $(T,\FF,\PP,r)$, then they also 
hold if $R$ is replaced by $R/\m^k$ and $T$ by $T/\m^k$, for $k \ge 0$.  
If $R$ is artinian and \ref{h.1} through \ref{h.6} hold, then Lemma \ref{okl} below shows that 
\ref{h.1} through \ref{h.7} hold if we replace $L$ by $\cH_M$ and  $\PP$ by $\PP(\cH_M,\tau)$.
\end{rem}

\begin{rem}
Assumption \ref{h.5} is needed to have a well-defined notion of core rank.  
Assumption \ref{h.2} is needed to provide is with a large selection of 
primes $\l$ such that $T/(\Frl-1,\m^k)$ is free of rank one, for large $k$.
 
We deduce from assumption \ref{h.3} that restriction 
from $K$ to $\cH_M(T)$ is injective on the Selmer group; this allows us to view Selmer classes 
in $\Hom(G_{\cH_M(T)},T)$.  Assumptions \ref{h.1} and \ref{h.4} then allow 
us to satisfy various Cebotarev conditions simultaneously.
\end{rem}

\begin{lem}
\label{okl}
Suppose $R$ is artinian and $\tau$ is as in {\rm \ref{h.2}}.
If $\l \in \PP(\cH_M,\tau)$, then $\Il = 0$. 
\end{lem}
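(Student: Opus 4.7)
The plan is to verify that the two clauses defining $\Il$ in Definition \ref{pndef} both hold with $\Il = 0$, namely (a) $[K(\l)_\l : K_\l]R \subset 0$, equivalently $M \mid [K(\l)_\l : K_\l]$, and (b) $T/(\Frl - 1)T$ is free of rank one over $R$. A first observation is that $\tau \in \Gal(\Kb/\cH_M)$ has trivial image in $\Gal(\cH_M/K)$, so the condition $\l \in \PP(\cH_M,\tau)$ of Definition \ref{mide} forces $\Frl$ to be trivial in $\Gal(\cH_M/K)$, i.e.\ $\l$ splits completely in $\cH_M$. In particular $\l$ splits in the Hilbert class field $\cH \subset \cH_M$, so $\l$ is principal and the ``principal'' case of Definition \ref{pndef} applies.

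For (a) I would identify $[K(\l)_\l : K_\l]$ by class field theory. Since $\l$ is principal and splits in $\cH$, the Frobenius of $\l$ is trivial on the class-group quotient, so decomposition and inertia coincide and the local degree $[K(\l)_\l : K_\l]$ equals the $p$-part of $|(\O_K/\l)^\times / \image(\O_K^\times)|$, the image being that of the global units reduced modulo $\l$. Splitting of $\l$ in $K(\bmu_M) \subset \cH_M$ yields $M \mid |(\O_K/\l)^\times|$. Splitting in $\cH(\bmu_M, (\O_K^\times)^{1/M}) = \cH_M$ forces every global unit to become an $M$-th power locally at $\l$, so $\image(\O_K^\times) \subset ((\O_K/\l)^\times)^M$, a subgroup of index $M$. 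Therefore $|(\O_K/\l)^\times / \image(\O_K^\times)|$ is divisible by $M$, and since $M$ is a $p$-power the same holds for its $p$-part.

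For (b) I would use that $\Frl$ and $\tau$ become conjugate as automorphisms of $T$. The appropriate reading of the Cebotarev condition defining $\PP(\cH_M,\tau)$ in this setting is that one can choose a Frobenius at $\l$ whose image in $\Gal(\cH_M(T)/K)$, where $\cH_M(T) = \cH_M \cdot K(T)$ is the field through which both $\Frl$ and $\tau$ act on $T$, is conjugate to the image of $\tau$. An inner automorphism of $\Aut(T)$ then carries $\Frl$ to $\tau$, producing an $R$-module isomorphism $T/(\Frl-1)T \cong T/(\tau-1)T$; the latter is free of rank one by \ref{h.2}.

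The main obstacle is clause (b): one must be careful to interpret the Cebotarev condition so that the Frobenius action on $T$ is really conjugate to $\tau$'s, rather than only recording that $\l$ splits in $\cH_M$. Once (b) is in hand, (a) is a standard tame class field theory calculation and the freeness transfers by transport of structure.
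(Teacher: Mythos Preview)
Your argument follows the paper's proof line for line: the same class-field-theory computation for (a), using that $\Frl$ fixes both $\bmu_M$ and $(\O_K^\times)^{1/M}$ to obtain $M \mid [K(\l)_\l:K_\l]$, and the same assertion $T/(\Frl-1)T \cong T/(\tau-1)T$ for (b). The paper states this last isomorphism without further comment, so the concern you flag---that Definition~\ref{mide} with $L=\cH_M$ only constrains $\Frl$ in $\Gal(\cH_M/K)$, where $\tau$ is already trivial, and hence does not by itself pin down the action of $\Frl$ on $T$---is one the paper's own proof does not address.
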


\begin{proof}
Since $\Frl$ fixes $\cH$, $\l$ is principal.  By class field theory we have 
\begin{equation}
\label{2.4.1}
\Gal(K(\l)_\l/\Kl) \cong (\O_K/\l)^\times/\image(\O_K^\times).
\end{equation}
Since $\tau$ acts trivially on $\bmu_M$, so does $\Frl$, so $|(\O_K/\l)^\times|$ is cyclic 
of order divisible by $M$.
Since $\tau$ acts trivially on $(\O_K^\times)^{1/M}$, so does $\Frl$, so the 
reduction of $\O_K^\times$ is contained in $((\O_K/\l)^\times)^M$.
By \eqref{2.4.1} we conclude that $[K(\l)_\l:\Kl]$ is divisible by $M$, 
so $[K(\l)_\l:\Kl]R = 0$. 
We also have that $T/(\Frl-1)T \cong T/(\tau-1)T$ is free of rank one over $R$, 
so the lemma follows from the definition of $\Il$.
\end{proof}

\section{Examples}
\label{esexs}

\subsection{A canonical Selmer structure}

\begin{defn}
\label{css}
When $R$ is a discrete valuation ring, 
we define a canonical {\em unramified Selmer structure} $\FFc$ on $T$ by
\begin{itemize}
\item
$\Sigma(\FFc) := \{\l : \text{$T$ is ramified at 
$\l$}\} \cup \{\p : \p \mid p\} \cup \{v : v \mid \infty\}$,
\item
if $\l \in \Sigma(\FFc)$ and $\l \nmid p\infty$ then 
$$
\HS{\FFc}(K_\l,T) := \ker\bigl[H^1(K_\l,T) \to H^1(K_\l^{\unr},T \otimes \Qp)\bigr],
$$
\item
if $\p \mid p$ then define the universal norm subgroup 
%$K_{\infty,\p} = \cup K_{n,\p}$ 
%denote the unramified $\Zp$ extension of $K_\p$, and let 
$$
H^1(K_\p,T)^\ru := \cap_{K_\p \subset L \subset K_\p^\unr} \Cor_{L/K_\p}H^1(L,T),
$$ 
intersection over all finite unramified extensions $L$ of $K_\p$.  Define
$$
\HS{\FFc}(K_\p,T) := H^1(K_\p,T)^{\ru,\sat},
$$
the saturation of 
$H^1(K_\p,T)^\ru$ in $H^1(K_\p,T)$, i.e., 
$H^1(K_\p,T)/\HS{\FFc}(K_\p,T)$ is $R$-torsion-free and 
$\HS{\FFc}(K_\p,T)/H^1(K_\p,T)^\ru$ has finite length,
\item
if $v \mid \infty$ then $$\HS{\FFc}(K_v,T) := H^1(K_v,T).$$
\end{itemize}
In other words, $\HS{\FFc}(K,T)$ is the Selmer group of 
classes that (after multiplication by some power of $p$) 
are unramified away from $p$, and  universal norms in the 
unramified $\Zp$-extension above $p$.
\end{defn}

Note that the Selmer structure $\FFc$ satisfies \ref{h.5} by Remark \ref{5.2}.

\begin{lem}
\label{urrel}
If $\p\mid p$ then $\corank_R\HS{\FFc^*}(K_\p,T^*) = \corank_RH^0(K_\p,T^*)$.  
\end{lem}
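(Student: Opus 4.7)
The plan is to use local Tate duality to translate the question about the corank of $\HS{\FFc^*}(K_\p,T^*)$ into a rank computation on the $T$-side, and then to identify that rank via the universal-norm description of $\HS{\FFc}(K_\p,T)$.

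First I would observe that, because $\HS{\FFc}(K_\p,T)$ is by definition the saturation of $H^1(K_\p,T)^\ru$, the two modules have the same $R$-rank and differ by a finite group. Taking orthogonal complements under the (perfect) Tate pairing $\pairl{~}{~}$, I conclude that $\HS{\FFc^*}(K_\p,T^*)$ differs from $(H^1(K_\p,T)^\ru)^\perp$ by a finite group, hence has the same $R$-corank. Moreover, for any closed $R$-submodule $A\subset H^1(K_\p,T)$, the Pontryagin-duality identification $A^\perp\cong\Hom(H^1(K_\p,T)/A,\Qp/\Zp)$ shows that $\corank_R A^\perp=\rank_R(H^1(K_\p,T)/A)$.

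Next I would identify $(H^1(K_\p,T)^\ru)^\perp$ more concretely. Letting $K_{\p,n}$ run through the finite layers of the unramified $\Zp$-extension $K_\p^{\unr,\Zp}/K_\p$, and using the adjointness $\pair{\Cor x}{y}_{K_\p}=\pair{x}{\Res y}_{K_{\p,n}}$, I get
$$
\bigl(\Cor_{K_{\p,n}/K_\p}H^1(K_{\p,n},T)\bigr)^\perp=\ker\bigl(\Res:H^1(K_\p,T^*)\too H^1(K_{\p,n},T^*)\bigr).
$$
Intersecting over $n$ on the $T$-side becomes unioning on the $T^*$-side (where $H^1(K_\p,T^*)$ is discrete), so
$$
(H^1(K_\p,T)^\ru)^\perp=\ker\bigl(H^1(K_\p,T^*)\too H^1(K_\p^{\unr,\Zp},T^*)\bigr).
$$
Set $\Gamma:=\Gal(K_\p^{\unr,\Zp}/K_\p)\cong\Zp$ and $W:=(T^*)^{G_{K_\p^{\unr,\Zp}}}$. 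The inflation-restriction sequence (together with $H^2(\Gamma,-)=0$, since $\Gamma$ has cohomological dimension $1$) identifies the above kernel with $H^1(\Gamma,W)$.

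Finally I would compute $\corank_R H^1(\Gamma,W)$. Because $\Gamma\cong\Zp$ is topologically cyclic with generator $\gamma$, we have $H^1(\Gamma,W)=W/(\gamma-1)W$. Since $W$ is a cofinitely generated $R$-module and $\gamma-1$ is an $R$-endomorphism, the kernel and cokernel of $\gamma-1$ on $W$ have the same $R$-corank (both are controlled by the number of zero eigenvalues of $\gamma-1$ on $W\otimes_R\Frac(R)$). Thus
$$
\corank_R\, W/(\gamma-1)W=\corank_R\, W^\Gamma=\corank_R\, (T^*)^{G_{K_\p}}=\corank_R\, H^0(K_\p,T^*),
$$
where the middle equality uses that $G_{K_\p^{\unr,\Zp}}\!\cdot\Gamma=G_{K_\p}$. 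Stringing these identifications together gives $\corank_R\HS{\FFc^*}(K_\p,T^*)=\corank_R H^0(K_\p,T^*)$.

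The only real subtlety to watch is the commutation of the orthogonal complement with the intersection $\cap_n\Cor H^1(K_{\p,n},T)$; this relies on the fact that $H^1(K_\p,T^*)$ is a discrete $R$-module, so unions of submodules are automatically closed, and duality between compact and discrete modules sends projective limits to injective limits. Everything else is an application of local Tate duality, Hochschild–Serre, and the elementary structure theory of cofinitely generated $R$-modules.
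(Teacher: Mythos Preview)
Your argument is correct and follows essentially the same route as the paper: both reduce to identifying the orthogonal complement of the universal norms with $H^1(\Gamma,W)=W/(\gamma-1)W$ for $W=(T^*)^{I}$ and $\Gamma$ the unramified Galois group, and then use the exact sequence $0\to W^{\gamma=1}\to W\xrightarrow{\gamma-1}W\to W/(\gamma-1)W\to 0$ to equate coranks. The only differences are cosmetic: the paper outsources the orthogonal-complement identification to a lemma of Perrin-Riou \cite[\S2.1.1]{bprheights} and works with the full maximal unramified extension $K_\p^\unr$, whereas you unpack that step directly via the $\Cor$/$\Res$ adjunction and work with the unramified $\Zp$-extension. (The reduction from all finite unramified $L$ in the definition of $H^1(K_\p,T)^\ru$ to the $\Zp$-layers is harmless since prime-to-$p$ corestrictions are surjective on $R$-modules, but you might say so explicitly.)
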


\begin{proof}
By the Lemma in \cite[\S2.1.1]{bprheights} 
(applied to the unramified $\Zp$-extension of $K_\p$), 
$\HS{\FFc^*}(K_\p,T^*)$ is the maximal divisible submodule of the image of the 
(injective) inflation map
$$
H^1(K_\p^\unr/K_\p,(T^*)^{G_{K_\p^\unr}}) \too H^1(K_\p,T^*).
$$
We have 
$$
H^1(K_\p^\unr/K_\p,(T^*)^{G_{K_\p^\unr}}) \cong 
   (T^*)^{G_{K_\p^\unr}}/(\gamma-1)(T^*)^{G_{K_\p^\unr}}
$$
where $\gamma$ is a topological generator of $\Gal(K_\p^\unr/K_\p)$. 
Thus we have an exact sequence 
\begin{multline*}
0 \too H^0(K_\p,T^*) \too (T^*)^{G_{K_\p^\unr}} \map{\gamma-1} (T^*)^{G_{K_\p^\unr}} \\
   \too H^1(K_\p^\unr/K_\p,(T^*)^{G_{K_\p^\unr}}) \too 0 
\end{multline*}
and the lemma follows.
\end{proof}

\begin{cor}
\label{urrelcor}
If $\p\mid p$ and $H^0(K_\p,T^*)$ has finite length, then $\HS{\FFc}(K_\p,T) = H^1(K_\p,T)$.
\end{cor}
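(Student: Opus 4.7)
The plan is to argue on the dual side: by Definition \ref{globdualdef}, $\HS{\FFc^*}(K_\p,T^*)$ is the orthogonal complement of $\HS{\FFc}(K_\p,T)$ under the local Tate pairing, and since $H^1(K_\p,T)$ is finitely generated over the complete DVR $R$ its submodule $\HS{\FFc}(K_\p,T)$ is automatically closed. The biorthogonality of the Pontryagin pairing between the compact $R$-module $H^1(K_\p,T)$ and the discrete cofinitely generated module $H^1(K_\p,T^*)$ then gives $\HS{\FFc}(K_\p,T) = \HS{\FFc^*}(K_\p,T^*)^{\perp}$, so it suffices to prove $\HS{\FFc^*}(K_\p,T^*) = 0$, as its orthogonal complement would then be all of $H^1(K_\p,T)$.

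To prove that vanishing I would combine two observations. First, since $R$ is a discrete valuation ring, the hypothesis that $H^0(K_\p,T^*)$ has finite length means $H^0(K_\p,T^*)$ is finite, so $\corank_R H^0(K_\p,T^*) = 0$; Lemma \ref{urrel} then yields $\corank_R \HS{\FFc^*}(K_\p,T^*) = 0$. Second, the proof of Lemma \ref{urrel} actually identifies $\HS{\FFc^*}(K_\p,T^*)$ with the maximal divisible submodule of the image of the injective inflation map $H^1(K_\p^\unr/K_\p,(T^*)^{G_{K_\p^\unr}}) \hookto H^1(K_\p,T^*)$, so $\HS{\FFc^*}(K_\p,T^*)$ is divisible. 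A divisible $R$-module of corank zero is zero, completing the argument.

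I do not foresee any essential obstacle: the corollary really is a direct consequence of Lemma \ref{urrel} together with the divisibility output of its proof and local Tate duality. The only point that requires a moment of care is the biorthogonality step, but this is standard in the present setting since $H^1(K_\p,T)$ is a finitely generated module over a complete local ring, hence profinite, and the Tate pairing identifies it with the Pontryagin dual of $H^1(K_\p,T^*)$.
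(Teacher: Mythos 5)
Your argument is correct, and it reaches the same reduction as the paper (via Lemma \ref{urrel}, the hypothesis forces $\corank_R\HS{\FFc^*}(K_\p,T^*)=0$), but it closes the loop differently. The paper stops at the \emph{statement} of Lemma \ref{urrel}: corank zero plus cofinite generation gives that $\HS{\FFc^*}(K_\p,T^*)$ has finite length, hence so does its Pontryagin dual $H^1(K_\p,T)/\HS{\FFc}(K_\p,T)$; since that quotient is $R$-torsion-free by the very definition of $\HS{\FFc}(K_\p,T)$ as a saturation, it must vanish. You instead prove the stronger assertion $\HS{\FFc^*}(K_\p,T^*)=0$ by combining corank zero with the divisibility of $\HS{\FFc^*}(K_\p,T^*)$, and then invoke biorthogonality (which is legitimate here, as you note, because $\HS{\FFc}(K_\p,T)$ is a closed submodule of the profinite module $H^1(K_\p,T)$). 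The one thing to be aware of is that the divisibility you use is not part of the statement of Lemma \ref{urrel} but is extracted from its proof (the identification, via Perrin-Riou's lemma, of $\HS{\FFc^*}(K_\p,T^*)$ with the maximal divisible submodule of the image of inflation); the paper's route avoids this by leaning on the torsion-freeness built into Definition \ref{css}. Both are short and valid; yours yields the slightly stronger intermediate conclusion that the dual local condition vanishes outright, while the paper's is more self-contained given only the lemma as stated.
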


\begin{proof}
By Lemma \ref{urrel} $\HS{\FFc^*}(K_\p,T^*)$ has finite length, so 
$H^1(K_\p,T)/\HS{\FFc}(K_\p,T)$ has finite length.  But by definition 
$H^1(K_\p,T)/\HS{\FFc}(K_\p,T)$ is $R$-torsion-free, so $\HS{\FFc}(K_\p,T) = H^1(K_\p,T)$.
\end{proof}

\begin{thm}
\label{dvrdm}
Suppose $R$ is a discrete valuation ring.
%and $\FF = \FFc$, the unramified 
%Selmer structure on $T$ given by Definition \ref{css}.  
Then 
$$
\cm(T,\FFc,\PP) = \sum_{v \mid \infty} \corank_R(H^0(K_v,T^*)).
$$ 
\end{thm}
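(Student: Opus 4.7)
My approach is to express the core rank as a sum of local ranks via Poitou--Tate global duality in its ``Selmer ratio'' form, and then to compute each local contribution explicitly using the definition of $\FFc$.

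First, I apply Poitou--Tate duality. Under hypothesis \ref{h.1}, the global invariants $H^0(K,T)$ and $H^0(K,T^*)$ both vanish, so the standard Selmer ratio identity gives
$$
\rank_R \HS{\FFc}(K,T) \;-\; \corank_R \HS{\FFc^*}(K,T^*)
\;=\; \sum_v \bigl( \rank_R \HS{\FFc}(K_v,T) \;-\; \rank_R H^0(K_v,T) \bigr).
$$
By Proposition \ref{stprops}(iii) applied to the artinian quotients $T/\m^k T$ (with $\n=1$), the left-hand side equals $\cm(T,\FFc)$. Write $\delta(v)$ for the $v$-th local summand on the right.

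Next I compute each $\delta(v)$. At any $v\nmid p\infty$ where $T$ is unramified, Lemma \ref{isofunct}(ii) gives $\HS{\FFc}(K_v,T)=\Hf(K_v,T)\cong T/(\Fr_v-1)T$, which has the same $R$-rank as $T^{\Fr_v=1}=H^0(K_v,T)$; so $\delta(v)=0$. The same vanishing holds at bad finite places $v\in\Sigma(\FFc)$, $v\nmid p\infty$, by applying the same argument to the ``unramified up to torsion'' condition in Definition \ref{css}. At $\p\mid p$, local Tate duality gives $\rank_R\HS{\FFc}(K_\p,T)+\corank_R\HS{\FFc^*}(K_\p,T^*)=\rank_R H^1(K_\p,T)$; Lemma \ref{urrel} identifies $\corank_R\HS{\FFc^*}(K_\p,T^*)$ with $\corank_R H^0(K_\p,T^*)$; and combining these with the local Euler characteristic formula together with the duality $\rank_R H^2(K_\p,T)=\corank_R H^0(K_\p,T^*)$ yields $\delta(\p)=[K_\p:\Qp]\rank_R T$. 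At archimedean $v$, assuming $p$ is odd (which is automatic under \ref{h.4} once the nonarchimedean-isomorphism case is excluded), $H^1(K_v,T)=0$, so $\HS{\FFc}(K_v,T)=0$ and $\delta(v)=-\rank_R H^0(K_v,T)$.

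Summing and using $\sum_{\p\mid p}[K_\p:\Qp]=[K:\Q]=\sum_{v\mid\infty}[K_v:\R]$ gives
$$
\cm(T,\FFc) \;=\; \sum_{v\mid\infty}\bigl([K_v:\R]\rank_R T - \rank_R H^0(K_v,T)\bigr).
$$
To identify each summand with $\corank_R H^0(K_v,T^*)$: for complex $v$ both sides equal $\rank_R T$; for real $v$ and $p$ odd, decompose $T$ into $\pm$-eigenspaces of complex conjugation $c_v$ and use that $c_v$ acts as $-1$ on $\bmu_{p^\infty}$ to obtain $\corank_R H^0(K_v,T^*)=\rank_R T^{c_v=-1}=\rank_R T-\rank_R T^{c_v=1}$, matching the left side. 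The main technical obstacle is the local computation at $\p\mid p$: one must handle the saturated universal-norm local condition of Definition \ref{css} carefully (including its interaction with artinian reduction) and combine Lemma \ref{urrel} with local Tate duality and the local Euler characteristic formula to extract $\delta(\p)$; the archimedean step in residue characteristic $2$ or $3$ is a secondary subtlety that \ref{h.4} only partially addresses.
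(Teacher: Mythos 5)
Your argument is correct in outline, and it rests on the same engine as the paper's proof --- the Greenberg--Wiles/Poitou--Tate formula applied to the artinian quotients $T/\m^kT$, combined with Lemma \ref{urrel} at the places above $p$ --- but you organize the local terms on the opposite side of local duality. The paper's version of the formula (equation \eqref{wiles}, quoted from \cite[Proposition 2.3.5]{kolysys}) has local contribution $\length(H^0(K_v,T/\m^kT)^*)) - \length(\HS{\FFc^*}(K_v,(T/\m^kT)^*))$ at each $v\in\Sigma(\FFc)$; with that bookkeeping the terms at $\p\mid p$ are bounded directly by Lemma \ref{urrel}, the terms at bad $\l\nmid p\infty$ are bounded by the four-term unramified exact sequence, and the archimedean terms alone produce $k\,\corank_R H^0(K_v,T^*)$ with no further manipulation. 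Your ``primal'' form $\rank_R\HS{\FFc}(K_v,T)-\rank_R H^0(K_v,T)$ instead makes the places above $p$ carry the main contribution $[K_\p:\Qp]\rank_R T$ (extracted via the local Euler characteristic formula, local duality, and Lemma \ref{urrel}), which you must then redistribute to the archimedean places using $\sum_{\p\mid p}[K_\p:\Qp]=[K:\Q]=\sum_{v\mid\infty}[K_v:\R]$ and the eigenspace comparison $\corank_R H^0(K_v,T^*)=\rank_R T-\rank_R H^0(K_v,T)$ at real $v$. All of these extra steps are correct, but they are exactly what the dual bookkeeping avoids; what your route buys is that the answer visibly comes from a global Euler characteristic count rather than from the archimedean local conditions. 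Two small points. First, the genuine technical content --- which you correctly flag but do not carry out --- is the passage from lengths of the propagated local and global conditions on $T/\m^kT$ to $k$ times ranks over $R$ up to bounded error; for the saturated universal-norm condition at $\p\mid p$ this is precisely the paper's combination of Lemma \ref{urrel} with the bounded-kernel-and-cokernel comparison of $H^1(K_\p,(T/\m^kT)^*)$ with $H^1(K_\p,T^*)[\m^k]$, and it must also be done for the global Selmer groups (Proposition \ref{stprops}(i),(ii)). Second, hypothesis \ref{h.4} does \emph{not} force $p$ to be odd (it permits $p=2$ when $\Tb\not\cong\Tb^*$); this is harmless for your argument, since for any $p$ the group $H^1(K_v,T)$ at archimedean $v$ is killed by $2$ and is therefore $R$-torsion, so $\rank_R\HS{\FFc}(K_v,T)=0$ and the rank identities at real places survive after tensoring with $\Frac(R)$, but the parenthetical justification should be dropped.
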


\begin{proof}
For every $k > 0$ let $T_k = T/\m^k T$.  
If $f, g$ are functions of $k \in \Z^+$, we will write $f(k) \sim g(k)$ to mean that 
$|f(k)-g(k)|$ is bounded independently of $k$.  By definition of core rank 
(see Definition \ref{crd} and Proposition \ref{stprops}(iii)),
the theorem will follow if we can show that 
\begin{equation}
\label{dvd1}
\length(\HS{\FFc}(K,T_k)) - \length(\HS{\FFc^*}(K,T_k^*)) 
   \sim k\sum_{v \mid \infty} \corank_R(H^0(K_v,T^*)).
\end{equation}
By \cite[Proposition 2.3.5]{kolysys} (which is essentially \cite[Lemma 1.6]{wiles}),  
for every $k \in \Z^+$
\begin{multline}
\label{wiles}
\length(\HS{\FFc}(K,T_k)) - \length(\HS{\FFc^*}(K,T_k^*)) \\
   = \length(H^0(K,T_k)) - \length(H^0(K,T_k^*)) \\
    + \sum_{v \in \Sigma(\FFc)} \
            (\length(H^0(K_v,T_k^*)) - \length(\HS{\FFc^*}(K_v,T_k^*))).
\end{multline}

By hypothesis \ref{h.1}, $H^0(K,T_k) = H^0(K,T_k^*) = 0$.  
If $v \mid \infty$, then 
$$
\length(H^0(K_v,T_k^*)) \sim k \;\corank_R(H^0(K_v,T^*)), \quad 
   \length(\HS{\FFc^*}(K_v,T_k^*)) \sim 0.
$$

Suppose $\l \in \Sigma(\FF)$, $\l \nmid p\infty$.  Let $\II_\l$ denote an inertia group 
above $\l$ in $G_K$.  
By \cite[Lemma 1.3.5]{EulerSystems}, we have
$$
\length(\HS{\FFc^*}(K_\l,T_k^*)) \sim \length((T_k^*)^{\II_\l}/(\Frl-1)(T_k^*)^{\II_\l}).
$$
On the other hand, the exact sequence 
$$
0 \to H^0(K_\l,T_k^*) \to (T_k^*)^{\II_\l} 
    \map{\Frl-1} (T_k^*)^{\II_\l} \to (T_k^*)^{\II_\l}/(\Frl-1)(T_k^*)^{\II_\l} \to 0
$$
shows that 
$$
\length(H^0(K_\l,T_k^*)) = \length((T_k^*)^{\II_\l}/(\Frl-1)(T_k^*)^{\II_\l}).
$$
Thus the term for $v=\l$ in \eqref{wiles} is bounded independent of $k$.

Now suppose $\p \mid p$.  By Lemma \ref{urrel}, 
$\corank_R\HFs(K_\p,T^*) = \corank_RH^0(K_\p,T^*)$.
By definition $\HS{\FFc^*}(K_\p,T_k^*)$ is the inverse image of $\HS{\FFc^*}(K_\p,T^*)$ under the 
natural map $H^1(K_\p,T_k^*) \to H^1(K_\p,T^*)[\m^k]$.  A simple exercise shows that the 
kernel and cokernel of this map have length bounded independent of $k$, so 
we see that 
$$
\length(\HS{\FFc^*}(K_\p,T_k^*)) \sim k \;\corank_R\HS{\FFc^*}(K_\p,T^*) 
   = k \;\corank_RH^0(K_\p,T^*).
$$
Thus the term for $v=\p$ in \eqref{wiles} is bounded independent of $k$.

Combining these calculations proves \eqref{dvd1}, and hence the theorem.
\end{proof}

\subsection{Multiplicative groups}
\label{exgm}

Suppose $K$ is a number field and $\rho$ is a character of $G_K$ of finite order.  
For simplicity we will assume that $p > 2$, $\rho$ is nontrivial, and $\rho$ takes values in $\Zp^\times$.  
(Everything that follows holds more generally, only assuming that $\rho$ has 
order prime to $p$, but we would have to tensor everything with the extension 
$\Zp[\rho]$ where $\rho$ takes its values.)

Let $T := \Zp(1) \otimes \rho^{-1}$, a free $\Zp$-module of rank one with $G_K$ acting via the 
product of $\rho^{-1}$ and the cyclotomic character.  
Let $E$ be the cyclic extension of $K$ cut out by $\rho$, i.e., 
such that $\rho$ factors through an injective homomorphism $\Gal(E/K) \hookto \Zp^\times$.  
Let 
$$
\PP = \{\text{primes $\l$ of $K$} : 
   \text{$\l \nmid p$ and $\rho$ is unramified at $\l$}\}.
$$

A simple exercise in 
Galois cohomology (see for example \cite[\S6.1]{kolysys} or \cite[\S1.6.C]{EulerSystems}) 
shows that
$$
H^1(K,T) \cong (E^\times \otimes \Zp)^\rho
$$
where the superscript $\rho$ means the subgroup on which $\Gal(E/K)$ acts via $\rho$, 
and for every prime $\l$, 
$$
H^1(K_\l,T) \cong (E_\l^\times \otimes \Zp)^\rho
$$
where $E_\l = E \otimes_K K_\l$ is the product of the completions of $E$ above $\l$.  
With these identifications, the unramified Selmer structure of Definition \ref{css} 
is given by
$$
\HS{\FFc}(K_\l,T) := (\O_{E,\l}^\times \otimes \Zp)^\rho
$$
for every $\l$, where $\O_{E,\l}$ is the ring of integers of $E_\l$.  

\begin{prop}
\label{13.1}
Let $\Cl(E)$ denote the ideal class group of $E$.  There are natural isomorphisms
$$
\HS{\FFc}(K,T) \cong (\O_{E}^\times \otimes \Zp)^\rho, 
   \quad \HS{\FFc^*}(K,T^*) \cong \Hom(\Cl(E)^\rho,\Qp/\Zp)
$$
and for every $k \ge 0$ an exact sequence
$$
0 \too (\O_E^\times/(\O_E^\times)^{p^k})^\rho \too \HS{\FFc}(K,T/p^kT) \too \Cl(E)[p^k]^\rho \too 0
$$
and an isomorphism
$$
\HS{\FFc^*}(K,T^*[p^k]) \cong \Hom(\Cl(E)^\rho,\Z/p^k\Z).
$$
\end{prop}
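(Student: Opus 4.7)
The plan is to deduce all four statements from the already-established identifications $H^1(K,T) \cong (E^\times \otimes \Zp)^\rho$ and $H^1(K_\l, T) \cong (E_\l^\times \otimes \Zp)^\rho$, together with Kummer theory and global class field theory. Since $\rho$ has finite order, takes values in $\Zp^\times$, and $p>2$, its image lies in $\bmu_{p-1}$; hence $|\Gal(E/K)|$ is prime to $p$, and taking $\rho$-isotypic components is exact. For (i), the exact sequence $1 \to \O_E^\times \to E^\times \to P_E \to 0$ (with $P_E$ the torsion-free group of principal fractional ideals of $E$) remains exact after tensoring with $\Zp$. An element $x \otimes 1 \in E^\times \otimes \Zp$ has image in $\O_{E,\l}^\times \otimes \Zp$ for every finite $\l$ iff its image in $P_E \otimes \Zp$ vanishes, iff it lies in $\O_E^\times \otimes \Zp$; passing to $\rho$-components gives the claim.

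For (ii), identify $T^* \cong (\Qp/\Zp)(\rho)$; inflation-restriction along $E/K$ yields $H^1(K,T^*) \cong \Hom(G_E^{\ab}, \Qp/\Zp)^\rho$, and local Tate duality combined with local CFT identifies the dual Selmer condition at each $\l$ with the unramified homomorphisms out of $G_{E_\l}^{\ab}$. Global CFT then identifies everywhere-unramified homomorphisms with $\Hom(\Cl(E),\Qp/\Zp)$, whose $\rho$-isotypic piece is $\Hom(\Cl(E)^\rho, \Qp/\Zp)$. Statements (iii) and (iv) are finite-level analogs. By Kummer theory and inflation-restriction, $H^1(K, T/p^kT) \cong (E^\times/(E^\times)^{p^k})^\rho$, and the Selmer subgroup consists of classes $[x]$ with $p^k \mid \ord_\l(x)$ at every finite $\l$. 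The assignment $[x] \mapsto \frac{1}{p^k}\dv(x)$ then gives a well-defined, surjective homomorphism onto $\Cl(E)[p^k]$ with kernel $\O_E^\times\cdot(E^\times)^{p^k}/(E^\times)^{p^k} \cong \O_E^\times/(\O_E^\times)^{p^k}$; taking $\rho$-components preserves exactness and yields (iii). For (iv), $H^1(K, T^*[p^k]) \cong \Hom(G_E^{\ab}, \Z/p^k\Z)^\rho$ and the Selmer condition cuts out unramified-everywhere homomorphisms, whence $\Hom(\Cl(E)^\rho, \Z/p^k\Z)$ exactly as in the characteristic-zero case.

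The main technical hurdle is the local condition at archimedean places and primes $\p \mid p$: at archimedean places $p>2$ kills the relevant cohomology, so the unrestricted condition is vacuous, but at $\p \mid p$ the canonical Selmer structure is defined as the saturation of universal norms in the unramified $\Zp$-extension, which is not a priori the same as the image of local units. The identification $\HS{\FFc}(K_\p, T) = (\O_{E,\p}^\times \otimes \Zp)^\rho$ is, however, asserted by the paper in the display immediately preceding the proposition. Granting this, the Kummer-theoretic analysis above applies uniformly at every finite place, and the four isomorphisms all fall out by the $\rho$-component decomposition of the standard exact sequences relating units, principal ideals, and the class group of $E$.
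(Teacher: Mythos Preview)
Your argument is correct and is precisely the standard Kummer-theory/class-field-theory computation one expects here; you also correctly isolate the one nontrivial input, namely the identification $\HS{\FFc}(K_\p,T) = (\O_{E,\p}^\times \otimes \Zp)^\rho$ at primes $\p\mid p$, which the paper states just before the proposition. The paper itself gives no argument at all---its proof consists solely of the citation ``See for example \cite[Proposition 6.1.3]{kolysys}''---so your write-up is in fact more complete than what appears in the text, and presumably reconstructs the content of the cited reference.
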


\begin{proof}
See for example \cite[Proposition 6.1.3]{kolysys}.
\end{proof}

Suppose in addition now that $\rho \ne \teich$, and either $\rho^2 \ne \teich$ or $p > 3$, 
where $\teich : G_K \to \Zp^\times$ is the Teichm\"uller character 
giving the action of $G_K$ on $\bmu_p$.
Then conditions \ref{h.1}, \ref{h.3}, and \ref{h.4} of \S\ref{sscr} 
all hold.  By Remark \ref{5.2}, the Selmer structure $\FFc$ satisfies \ref{h.5} as well, 
and condition \ref{h.2} holds with $\tau = 1$ and $L = E$.  
Finally, if there is at least one real place $v$ of $K$ such that $\rho$ 
is trivial on complex conjugation at $v$, then the following corollary shows that 
condition \ref{h.6} holds.

\begin{cor}
\label{eighttwo}
The core rank $\cm(T,\FFc)$ is 
$$
\cm(T) = \dim_{\Fp}(\O_E^\times/(\O_E^\times)^{p})^\rho = \rank_{\Zp}(\O_{E}^\times \otimes \Zp)^\rho 
   = |\{\text{archimedean $v$} : \rho(\sigma_v) = 1\}|
$$
where $\sigma_v \in \Gal(E/K)$ is the complex conjugation at $v$.
\end{cor}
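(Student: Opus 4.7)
The plan is to apply Theorem \ref{dvrdm} to obtain the numerical formula, and then recognize the answer as the dimension of the $\rho$-component of the Dirichlet regulator representation.

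First I would invoke Theorem \ref{dvrdm} directly: under the standing hypotheses,
$$
\cm(T,\FFc)\;=\;\sum_{v\mid\infty}\corank_R H^0(K_v,T^*).
$$
Since $T=\Zp(1)\otimes\rho^{-1}$, a direct computation gives $T^*=\Hom(T,\bmu_{p^\infty})\cong(\Qp/\Zp)\otimes\rho$ as $G_K$-modules (the $G_K$-action on the underlying $\Qp/\Zp$ is by $\rho$). For a complex place $v$, $G_{K_v}$ is trivial so $H^0(K_v,T^*)=T^*$ has corank $1$; for a real place $v$, complex conjugation $\sigma_v$ acts by $\rho(\sigma_v)=\pm 1$, and since $p$ is odd we get $H^0(K_v,T^*)=T^*$ (corank $1$) when $\rho(\sigma_v)=1$ and $H^0(K_v,T^*)=T^*[2]=0$ when $\rho(\sigma_v)=-1$. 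With the usual convention that $\sigma_v=1$ for complex $v$, this yields $\cm(T)=|\{v\mid\infty:\rho(\sigma_v)=1\}|$.

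Next I would identify this count with $\rank_{\Zp}(\O_E^\times\otimes\Zp)^\rho$ using the Galois-equivariant Dirichlet regulator. As $\R[\Gal(E/K)]$-modules,
$$
\R\otimes_\Z\O_E^\times\;\cong\;\ker\bigl(\R[X_\infty(E)]\xrightarrow{\,\mathrm{sum}\,}\R\bigr),
$$
where $X_\infty(E)$ is the set of archimedean places of $E$ with its natural permutation action. Since $\rho\ne 1$, the sum map kills the $\rho$-isotypic part, so $(\R\otimes\O_E^\times)^\rho\cong\R[X_\infty(E)]^\rho$. Decomposing $\R[X_\infty(E)]=\bigoplus_{v\mid\infty}\Ind_{D_v}^{\Gal(E/K)}\mathbf{1}$ over the archimedean places of $K$ and applying Frobenius reciprocity shows that $\bigl(\Ind_{D_v}^{\Gal(E/K)}\mathbf{1}\bigr)^\rho$ has dimension $1$ if $\rho|_{D_v}$ is trivial and $0$ otherwise. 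Since $D_v$ is trivial for complex $v$ and generated by $\sigma_v$ for real $v$, the right-hand dimension is exactly $|\{v\mid\infty:\rho(\sigma_v)=1\}|$.

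Finally, for the equality with $\dim_{\Fp}(\O_E^\times/(\O_E^\times)^p)^\rho$: because $p$ is odd, every finite-order element of $\Zp^\times$ lies in $\bmu_{p-1}$, so $\rho(\Gal(E/K))$ has order prime to $p$, and taking $\rho$-isotypic components is an exact functor on $\Zp[\Gal(E/K)]$-modules. It therefore suffices to check that $(\O_E^\times\otimes\Zp)^\rho$ is $\Zp$-free, i.e., that $\bmu_{p^\infty}(E)^\rho=0$. But the $G_K$-action on any $p$-power roots of unity in $E$ reduces modulo $p$ to $\teich$, so a nonzero $\rho$-component would force $\rho=\teich$, contradicting the standing hypothesis $\rho\ne\teich$. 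The only step needing genuine care is the Galois-equivariant form of Dirichlet's theorem invoked in the previous paragraph, and that is classical; everything else is bookkeeping around Theorem \ref{dvrdm}.
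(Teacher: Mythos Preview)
Your proof is correct, but it takes a different route from the paper's.

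The paper reads the equalities from left to right: it first uses Proposition \ref{13.1} (the explicit identification of $\HS{\FFc}(K,T/p^kT)$ and $\HS{\FFc^*}(K,T^*[p^k])$ with units and class groups) together with the definition of core rank to get $\cm(T)=\dim_{\Fp}(\O_E^\times/(\O_E^\times)^p)^\rho$ directly; the passage to the $\Zp$-rank uses $\rho\ne\teich$, and the final archimedean count is quoted from the equivariant Dirichlet theorem (citing Tate). You instead start from the other end: you invoke Theorem \ref{dvrdm} to land immediately on the archimedean formula, compute $H^0(K_v,T^*)$ by hand, and then work backwards through the Dirichlet regulator and the $\rho\ne\teich$ argument. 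Your route avoids appealing to Proposition \ref{13.1} entirely, at the cost of unpacking Theorem \ref{dvrdm} and the regulator isomorphism explicitly; the paper's route is shorter because it has already established Proposition \ref{13.1} and is willing to cite the unit-rank formula as ``well-known.'' Both approaches need the same two nontrivial external facts (the equivariant unit theorem and $\rho\ne\teich$), just in a different order.
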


\begin{proof}
The first equality follows from Proposition \ref{13.1} and the definition of core rank, and the second 
because $\rho \ne \teich$.  The third equality is well-known (using that $\rho \ne 1$); see for example 
\cite[Proposition I.3.4]{tate}.  
\end{proof}

%One can also deduce Corollary \ref{eighttwo} from Theorem \ref{dvrdm}.

Thus if $E/K$ is an extension of totally real fields 
and $\rho \ne 1$, then $\cm(T,\FFc) = [K:\Q]$ by Corollary \ref{eighttwo}, 
and all conditions \ref{h.1} through \ref{h.6} are satisfied.

If $K = \Q$, then $\cm(T) = 1$, and a Kolyvagin system (see \S\ref{ksp}) can be constructed 
from the Euler system of cyclotomic units (see \cite{kolysys}).  

For a general totally real field $K$, if we assume the  
version of Stark's Conjecture described in \cite{rubin-stark}, then the 
so-called ``Rubin-Stark'' elements predicted by that conjecture
can be used to construct both an Euler system and a Stark system (see \S\ref{Ssec}).  
For the details and a thorough discussion of this example, see \cite{gen.darmon}.

\subsection{Abelian varieties}
Suppose $A$ is an abelian variety of dimension $d$ defined over the number field $K$.  
Let 
$$
\PP = \{\text{primes $\l$ of $K$} : \text{$\l \nmid p$ and $A$ has good reduction at $\l$}\}.
$$
Let $T$ be the Tate module $T_p(A) := \varprojlim A[p^k]$.  
Then $T$ is a free $\Zp$-module of rank $2d$ 
with a natural action of $G_K$, and $T^* = \check{A}[p^\infty]$ 
where $\check{A}$ is the dual abelian variety to $A$.

Let $\FF$ be the Selmer structure on $T$ given by 
$
\HF(K_v,T) = H^1(K_v,T)
$
for every $v$.  
Then $\FF$ is the unramified Selmer structure $\FFc$ given by Definition \ref{css}. 
(For $v$ dividing $p$, this follows from the Lemma in \cite[\S2.1.1]{bprheights}, 
and for $v$ not dividing $p$ it follows from the fact that $H^1(K_v,T)$ is finite.)
Further, $\FF$ is the usual Selmer structure attached to an abelian variety, with 
the local conditions at primes above $p$ relaxed (see 
for example \cite[\S 1.6.4]{EulerSystems}).  Hence we have 
an exact sequence
$$
0 \too \HF(K,T^*) \too \Sel_{p^\infty}(\check{A}/K) 
   \too \oplus_{\p \mid p} H^1(K_\p,\check{A}[p^\infty]).
$$

Suppose now that $p > 3$, and that the image of $G_K$ in $\Aut(A[p]) \cong \GL_{2d}(\Fp)$ 
is large enough so that conditions \ref{h.1}, \ref{h.2}, and \ref{h.3} of \S\ref{sscr} 
all hold.  For example, this will be true if the image of $G_K$ contains $\GSp_{2d}(\Fp)$.
Condition \ref{h.4} holds since $p>3$, and $\FF$ satisfies \ref{h.5} by Remark \ref{5.2}.
The following consequence of Theorem \ref{dvrdm} shows that condition \ref{h.6} 
holds as well.

\begin{prop}
\label{13.3}
The core rank of $T$ is given by
$
\cm(T) = d\;[K:\Q] .
$
\end{prop}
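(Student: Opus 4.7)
The plan is to apply Theorem \ref{dvrdm}. The discussion preceding the proposition already verifies the running hypotheses \ref{h.1}--\ref{h.6} for the Selmer structure $\FF$ on $T = T_p(A)$, and identifies $\FF$ with the unramified Selmer structure $\FFc$ of Definition \ref{css}. Theorem \ref{dvrdm} therefore gives
$$
\cm(T) = \sum_{v\mid\infty} \corank_{\Zp} H^0(K_v, T^*),
$$
with $T^* = \check{A}[p^\infty]$, reducing the problem to a local computation at each archimedean place.

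At a complex place $v$ the decomposition group is trivial, so $H^0(K_v, T^*) = \check{A}[p^\infty]$, which has $\Zp$-corank $2d$ since $T_p(\check{A})$ is free of rank $2d$ over $\Zp$. At a real place $v$ one has $H^0(K_v, T^*) = \check{A}(\R)[p^\infty]$, and I would argue that its $\Zp$-corank equals $d$ by observing that the identity component $\check{A}(\R)^0$ is a compact real Lie group of dimension $d$, hence a real torus isomorphic to $(\R/\Z)^d$, whose $p^\infty$-torsion subgroup is $(\Qp/\Zp)^d$; the component group $\check{A}(\R)/\check{A}(\R)^0$ is finite and contributes nothing to the corank. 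Equivalently, complex conjugation acts on $V_p(\check{A}) = T_p(\check{A}) \otimes \Q_p$ with $+1$ and $-1$ eigenspaces each of $\Q_p$-dimension $d$.

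Letting $r_1$ and $r_2$ denote the number of real and complex places of $K$, summing over archimedean places yields
$$
\cm(T) = r_1 \cdot d + r_2 \cdot 2d = d(r_1 + 2r_2) = d[K:\Q],
$$
as claimed. The only nontrivial input is the description of the real-Lie-group structure of $\check{A}(\R)$ and its $p$-power torsion, which is classical; everything else is a direct invocation of Theorem \ref{dvrdm}.
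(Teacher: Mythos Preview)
Your proof is correct and follows essentially the same approach as the paper's own proof: both invoke Theorem \ref{dvrdm} and then compute $\corank_{\Zp} H^0(K_v,\check{A}[p^\infty])$ to be $d$ at real places and $2d$ at complex places, summing to $d(r_1+2r_2)=d[K:\Q]$. You supply more detail at the real places (via the identity component of $\check{A}(\R)$ being a real torus of dimension $d$), which the paper simply asserts.
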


\begin{proof}
By Theorem \ref{dvrdm}, we have 
$$
\cm(T) = \sum_{v \mid \infty} \corank_{\Zp} H^0(K_v,\check{A}[p^\infty]).
$$  
If $v$ is a real place, then $\corank_{\Zp} H^0(K_v,\check A[p^\infty]) = d$, and if $v$ is a complex place then 
$\corank_{\Zp} H^0(K_v,\check A[p^\infty]) = \corank_{\Zp} \check A[p^\infty] = 2d$.
Thus 
$$
\sum_{v \mid \infty} \corank_{\Zp} H^0(K_v,\check A[p^\infty]) = \sum_{v \mid \infty} d\;[K_v:\R] = d\;[K:\Q].
$$
\end{proof}

If $K = \Q$ and $d = 1$ (i.e., $A$ is an elliptic curve), 
then Proposition \ref{13.3} shows that $\cm(T) = 1$.  In this case Kato has 
constructed an Euler system for $T$, from which one can produce a Kolyvagin system  
(\cite[Theorem 3.2.4]{kolysys}).

\part{Stark systems and the structure of Selmer groups}
\label{part2}

\section{Stark systems}
\label{Ssec}

Suppose for this section that $R$ is a principal artinian ring of length $k$, 
so $\m^k = 0$ and $\m^{k-1} \ne 0$. 
Fix Selmer data $(T,\FF,\PP,r)$ as in Definition \ref{sddef}.  
We assume throughout this section 
that \ref{h.7} of \S\ref{sscr} holds, i.e., $\Il = 0$ for every $\l\in\PP$.

Recall that $\nu(\n)$ denotes the number of prime factors of $\n$.

\begin{defn}
\label{yndef}
For every $\n \in \NN$, define 
\begin{align*}
\W{\n} &:= \oplus_{\l\mid\n} \Hom(\Ht(K_{\l},T),R), \\
\Y{\n} &:= \wedge^{r+\nu(\n)}\HS{\FF^{\n}}(K,T) \otimes \wedge^{\nu(\n)}\W{\n},
\end{align*}
where as usual the exterior powers are taken in the category of $R$-modules.
\end{defn}

Then $\W{\n}$ is a free $R$-module of rank $\nu(\n)$, since each 
$\Ht(K_{\l},T)$ is free of rank one (Lemma \ref{applemma}).  
If we fix an ordering $\n = \l_i \cdots \l_{\nu(\n)}$ of the primes 
dividing $\n$, and a generator $h_i$ of $\Hom(\Ht(K_{\l_i},T),R)$ for every $i$, 
then $h_1\wedge\cdots\wedge h_{\nu(\n)}$ is a generator of the free, 
rank-one $R$-module $\wedge^{\nu(\n)}\W{\n}$.

For the structure of $\Y{\n}$ when $r$ is the core rank of $T$, see Lemma \ref{ylem} below.

\begin{defn}
\label{piqdef}
For every $\l \in \PP$, define the transverse localization map
$$
\loc_{\l}^{\tr} : H^1(K,T) \map{\loc_\l} H^1(K_\l,T) \onto \Ht(K_\l,T),
$$
where the second map is projection (using the direct sum decomposition 
of Lemma \ref{isofunct}(i)) with kernel $\Hf(K_\l,T)$.
If $\n \in \NN$ and $\l\mid\n$, then 
\begin{equation}
\label{gi}
\ker \bigl(\loc_{\l}^{\tr} \bigl| \HS{\FF^{\n}}(K,T) \bigr) = \HS{\FF^{\n/\l}}(K,T).
\end{equation}
In exactly the same way, we can define a map $\loc_\l^{\f}$ 
by using the finite projection and the isomorphism $\fsl$ of 
Definition \ref{fsmap}
$$
\loc_\l^{\f} : H^1(K,T) \map{\loc_\l} H^1(K_\l,T) \onto \Hf(K,T) 
   \map{\fsl} \Ht(K_\l,T) \otimes \Gal(K(\l)_\l/K_\l),
$$ 
and then 
\begin{equation}
\label{gi2}
\ker \bigl(\loc_{\l}^{\f} \bigl| \HS{\FF^{\n}}(K,T) \bigr) = \HS{\FF^{\n/\l}(\l)}(K,T).
\end{equation}
\end{defn}

\begin{defn}
\label{4.3}
Suppose $\n \in \NN$ and $\mm \mid \n$.  By \eqref{gi} we have an exact sequence
$$
0 \too \HS{\FF^\mm}(K,T) \too \HS{\FF^\n}(K,T) 
   \map{\oplus \loc_\l^\tr} \dirsum{\l\mid(\n/\mm)}\Ht(K_\l,T)
$$
and it follows that the square
\begin{equation}
\label{cd}
\raisebox{22pt}
{\xymatrix@R=15pt{
~\HS{\FF^\mm}(K,T)~ \ar@{^(->}[r]\ar_{\oplus \loc_\l^\tr}[d] & ~\HS{\FF^\n}(K,T)~ \ar^{\oplus \loc_\l^\tr}[d] \\
~\dirsum{\l\mid\mm}\Ht(K_\l,T)~ \ar@{^(->}[r] & ~\dirsum{\l\mid\n}\Ht(K_\l,T)~
}}
\end{equation}
is cartesian.  Let
$$
\Psi_{\n,\mm} : \Y{\n} \too \Y{\mm}   
$$
be the map of Proposition \ref{bhprop}(i) attached to this diagram.

Concretely, $\Psi_{\n,\mm}$ is given as follows.
Fix a factorization $\n = \l_1 \cdots \l_t$, with 
$\mm = \l_1 \cdots \l_s$, and a generator $h_i$ of $\Hom(\Ht(K_{\l_i},T),R)$ 
for every $i$.  Let $\n_i = \prod_{j \le i}\l_j$.  These choices lead to a map
$$
\widehat{h_{s+1} \circ \loc_{\l_{s+1}}^{\tr}} \circ \cdots \circ 
   \widehat{h_{t} \circ \loc_{\l_{t}}^{\tr}}
   : \wedge^{r+t}\HS{\FF^{\n}}(K,T) \too \wedge^{r+s} \HS{\FF^{\mm}}(K,T)
$$
(where $\widehat{h_{i} \circ \loc_{\l_{i}}^{\tr}} 
   : \wedge^i\HS{\FF^{\n_i}}(K,T) \to \wedge^{i-1}\HS{\FF^{\n_{i-1}}}(K,T)$
is given by Proposition \ref{wedgemap}) 
and an isomorphism $\wedge^{\nu(\n)}\W{\n} \isom \wedge^{\nu(\mm)}\W{\mm}$ 
given by $h_1\wedge\cdots\wedge h_{t} \mapsto h_1\wedge\cdots\wedge h_{s}$.  
The tensor product of these two maps is the map 
$\Psi_{\n,\mm} : \Y{\n} \too \Y{\mm}$, and is independent of the choices made. 
\end{defn}

\begin{prop}
\label{premagic}
Suppose $\n \in \NN$, $\n' \mid \n$, and $\n'' \mid \n'$.  Then 
$\Psi_{\n',\n''} \circ \Psi_{\n,\n'} = \Psi_{\n,\n''}$.
\end{prop}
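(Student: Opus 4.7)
The plan is to unwind both sides using Definition \ref{4.3} with a \emph{single compatible} ordering of the primes dividing $\n$, and then check that the composition on the left collapses term-by-term to the right-hand side.

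First, I would choose a factorization $\n = \l_1 \cdots \l_t$ together with integers $u \le s \le t$ so that $\n'' = \l_1 \cdots \l_u$ and $\n' = \l_1 \cdots \l_s$; this is possible since $\n'' \mid \n' \mid \n$. Fix a generator $h_i$ of $\Hom(\Ht(K_{\l_i},T),R)$ for each $i$, and set $\n_i := \prod_{j \le i} \l_j$. Because the cartesian square \eqref{cd} holds for every pair $\mm \mid \n$ of divisors of $\n$, in particular for every consecutive pair $\n_{i-1}\mid\n_i$, each elementary map
$$
\widehat{h_i \circ \loc_{\l_i}^{\tr}} : \wedge^{r+i}\HS{\FF^{\n_i}}(K,T) \too \wedge^{r+i-1}\HS{\FF^{\n_{i-1}}}(K,T)
$$
is well defined via Proposition \ref{wedgemap}. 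Using this chosen ordering, the three maps in the statement are computed as the following compositions:
\begin{align*}
\Psi_{\n,\n'} &= \widehat{h_{s+1}\circ\loc_{\l_{s+1}}^\tr} \circ \cdots \circ \widehat{h_{t}\circ\loc_{\l_{t}}^\tr} \otimes (h_1\wedge\cdots\wedge h_t \mapsto h_1\wedge\cdots\wedge h_s),\\
\Psi_{\n',\n''} &= \widehat{h_{u+1}\circ\loc_{\l_{u+1}}^\tr} \circ \cdots \circ \widehat{h_{s}\circ\loc_{\l_{s}}^\tr} \otimes (h_1\wedge\cdots\wedge h_s \mapsto h_1\wedge\cdots\wedge h_u),\\
\Psi_{\n,\n''} &= \widehat{h_{u+1}\circ\loc_{\l_{u+1}}^\tr} \circ \cdots \circ \widehat{h_{t}\circ\loc_{\l_{t}}^\tr} \otimes (h_1\wedge\cdots\wedge h_t \mapsto h_1\wedge\cdots\wedge h_u).
\end{align*}
On the Selmer-side factor, the composite $\Psi_{\n',\n''}\circ\Psi_{\n,\n'}$ is by construction the composition of the same elementary arrows $\widehat{h_i\circ\loc_{\l_i}^\tr}$ for $i$ running from $t$ down to $u+1$; this equals the Selmer factor of $\Psi_{\n,\n''}$ on the nose. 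On the $\wedge^{\nu(\cdot)}\W{\cdot}$ factor, the composition of the two wedge-truncations is visibly the single truncation $h_1\wedge\cdots\wedge h_t \mapsto h_1\wedge\cdots\wedge h_u$. Taking the tensor product of the two factorwise identities gives $\Psi_{\n',\n''}\circ\Psi_{\n,\n'} = \Psi_{\n,\n''}$.

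The one point that needs care, and the main obstacle, is invoking the fact (asserted at the end of Definition \ref{4.3}) that $\Psi_{\n,\mm}$ does not depend on the ordering of the primes dividing $\n/\mm$ or on the choice of generators $h_i$; without that independence the argument is only valid for the specific compatible ordering we selected. Given this independence, the computation above is unambiguous, and the proposition follows. I would also remark that the argument works verbatim with $\loc_\l^\tr$ replaced by $\loc_\l^\f$, since it only uses the cartesian property of \eqref{cd} and the wedge-map construction of Proposition \ref{wedgemap}.
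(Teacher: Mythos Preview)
Your argument is correct and is essentially the paper's own proof unwound in context: the paper simply cites Proposition~\ref{bhprop}(iii), whose proof is exactly your compatible-ordering computation carried out in the abstract cartesian-square setting. The only difference is packaging---you redo the calculation in the specific case rather than invoking the general lemma.
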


\begin{proof}
This is Proposition \ref{bhprop}(iii).
\end{proof}

\begin{defn}
\label{4.6}
Thanks to Proposition \ref{premagic}, we can define the $R$-module 
$\SS_r(T) = \SS_r(T,\FF,\PP)$ of 
{\em Stark systems of rank $r$} to be the inverse limit
$$
\SS_r(T) := \lim_{\overleftarrow{\n\in\NN}} \Y{\n}
$$
with respect to the maps $\Psi_{\n,\mm}$.
\end{defn}

We call these collections Stark systems because a fundamental example 
is given by elements predicted by  a generalized Stark 
conjecture \cite{gen.darmon, rubin-stark}.

Let $\Y{\n}' = \m^{\length(\HS{(\FF^*)_{\n}}(K,T^*))}\Y{\n}$.

\begin{lem}
\label{ylem}
Suppose that hypotheses {\rm\ref{h.1}} through {\rm\ref{h.7}} 
of \S\ref{sscr} are satisfied, so in particular $r$ is the core rank of $T$.  Then:
\begin{enumerate}
\item
$\Y{\n}'$ is a cyclic $R$-module of length $\max\{k - \length(\HS{(\FF^*)_{\n}}(K,T^*)),0\}$.
\item
There are $\n\in\NN$ such that  $\HS{(\FF^*)_{\n}}(K,T^*) = 0$.
\item
If $\HS{(\FF^*)_{\n}}(K,T^*) = 0$ then $\Y{\n}$ is free of rank one over $R$.
\item
If $\HS{(\FF^*)_{\n}}(K,T^*) = 0$ and $\mm\mid\n$, 
then 
$
\Psi_{\n,\mm}(\Y{\n}) = \Y{\mm}'.
$
\end{enumerate}
\end{lem}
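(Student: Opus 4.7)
The plan is to read off (i) and (iii) from Corollary \ref{threefive}, to prove (ii) by a Cebotarev density argument, and to settle (iv) by an explicit Smith-normal-form computation for the iterated transverse localization. For (i) and (iii): each $\Ht(K_\l,T)$ is free of rank one by Lemma \ref{applemma}, so $\W{\n}$ is free of rank $\nu(\n)$ and $\wedge^{\nu(\n)}\W{\n}$ is free of rank one. Hence
$$
\Y{\n}' \;=\; \m^{\mu(\n)}\Y{\n} \;\cong\; \m^{\mu(\n)}\wedge^{r+\nu(\n)}\HS{\FF^\n}(K,T),
$$
where $\mu(\n) := \length(\HS{(\FF^*)_\n}(K,T^*))$. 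By Corollary \ref{threefive}(iv) this is isomorphic to $\m^{\mu(\n)}$, a cyclic $R$-module of length $\max\{k-\mu(\n),0\}$, proving (i). When $\mu(\n)=0$, Corollary \ref{threefive}(ii) sharpens this to $\HS{\FF^\n}(K,T)\cong R^{r+\nu(\n)}$, so $\Y{\n}$ is free of rank one and (iii) follows.

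For (ii) I would run a standard Cebotarev argument along the lines of \cite[Cor.~3.5.5, Prop.~3.6.1]{kolysys}. Hypothesis \ref{h.3} lets one restrict a class in $\HS{(\FF^*)_{\mm}}(K,T^*)$ to $G_{\cH_M(T)}$ injectively, realizing it as a $G_K$-equivariant homomorphism into $T^*$; the absolute irreducibility of $\Tb$ supplied by \ref{h.1}/\ref{h.4}, together with the element $\tau$ furnished by \ref{h.2}, then produces primes $\l\in\PP(L,\tau)\subseteq\PP$ whose localization on any prescribed nonzero class is nonzero, so that imposing the strict condition at $\l$ kills that class. Since $\HS{(\FF^*)_\mm}(K,T^*)$ has finite length, finitely many such steps annihilate it.

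Part (iv) is the substantive claim. Fix a factorization $\n=\mm\l_1\cdots\l_t$ and set $\phi:=(h_1\loc_{\l_1}^{\tr},\ldots,h_t\loc_{\l_t}^{\tr}) : \HS{\FF^\n}(K,T)\to R^t$. Iterating \eqref{gi} gives $\ker\phi=\HS{\FF^\mm}(K,T)$, and a length comparison using Corollary \ref{threefive}(ii) for both $\n$ and $\mm$ shows that $\coker(\phi)$ has length exactly $\mu(\mm)-\mu(\n)=\mu(\mm)$. Because (iii) makes $\HS{\FF^\n}(K,T)$ free of rank $r+\nu(\n)$, I can choose a basis $e_1,\ldots,e_{r+\nu(\n)}$ and a basis $f_1,\ldots,f_t$ of $R^t$ putting $\phi$ into Smith normal form: $\phi(e_i)=\pi^{a_i}f_i$ for $1\le i\le t$, $\phi(e_i)=0$ for $i>t$, and $\sum_{i=1}^t a_i=\mu(\mm)$. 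Direct computation of the iterated contraction of Definition \ref{4.3} on the generator $e_1\wedge\cdots\wedge e_{r+\nu(\n)}$ then yields, up to sign, $\pi^{\mu(\mm)}\cdot e_{t+1}\wedge\cdots\wedge e_{r+\nu(\n)}$, an element of $\wedge^{r+\nu(\mm)}\HS{\FF^\mm}(K,T)$.

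To close (iv) I must identify this element as a generator of $\m^{\mu(\mm)}\wedge^{r+\nu(\mm)}\HS{\FF^\mm}(K,T)$; tensoring with the canonical rank-one isomorphism $\wedge^{\nu(\n)}\W{\n}\cong\wedge^{\nu(\mm)}\W{\mm}$ then gives $\Psi_{\n,\mm}(\Y{\n})=\Y{\mm}'$. This identification rests on the Smith-basis decomposition
$$
\HS{\FF^\mm}(K,T) \;=\; \bigoplus_{i=1}^t R\cdot(\pi^{k-a_i}e_i) \;\oplus\; \bigoplus_{i=t+1}^{r+\nu(\n)} R\cdot e_i,
$$
whose torsion summands are annihilated by $\pi^{\max_i a_i}\le\pi^{\mu(\mm)}$. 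Consequently $\m^{\mu(\mm)}$ acts on $\wedge^{r+\nu(\mm)}\HS{\FF^\mm}(K,T)$ only through its top free piece $\wedge^{r+\nu(\mm)}R^{r+\nu(\mm)}$, which is generated precisely by $e_{t+1}\wedge\cdots\wedge e_{r+\nu(\n)}$. The main obstacle is this last step: $\HS{\FF^\mm}(K,T)$ can carry substantial torsion when $\mu(\mm)>0$, so one cannot naively work inside the ambient free $\HS{\FF^\n}(K,T)$, and the delicate point is that $\pi^{\mu(\mm)}$ is exactly the power needed to simultaneously kill the torsion and extract the cyclic free image in the codomain.
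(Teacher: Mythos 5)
Your proposal is correct and follows essentially the same route as the paper: (i) and (iii) are read off from Corollary \ref{threefive}, (ii) is the same Cebotarev argument via \cite[Proposition 3.6.1]{kolysys}, and (iv) amounts to computing the image of the iterated contraction attached to the cartesian square \eqref{cd}. The only difference is that for (iv) you re-derive the key exterior-algebra fact, Proposition \ref{bhprop}(ii), by an explicit Smith-normal-form computation (combined with the length count from Corollary \ref{threefive}(ii)) rather than citing it; since the appendix proof of that proposition is essentially the same diagonalization, this is the paper's argument with the black box opened.
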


\begin{proof}
Assertions (i) and (iii) follow directly from Corollary \ref{threefive}(iv).

Since $\HFs(K,T^*)$ is finite, we can choose generators $c_1, \ldots, c_t$ of $\HFs(K,T^*)[\m]$.
For each $i$, use \cite[Proposition 3.6.1]{kolysys} to choose $\l_i \in \NN$ 
such that $\loc_{\l_i}(c_i) \ne 0$, and let $\n = \prod_i \l_i$. 
Then $\HS{(\FF^*)_{\n}}(K,T^*) = 0$, so (ii) holds.

Proposition \ref{bhprop}(ii) applied to the diagram \eqref{cd} shows that 
$$
\Psi_{\n,\mm}(\Y{\n}) = \m^{\length(\HS{\FF^\mm}(K,T))-(r+\nu(\mm))k}\Y{\mm}.
$$
Corollary \ref{threefive}(ii) shows that 
$$
\length(\HS{\FF^\mm}(K,T))-(r+\nu(\mm))k = \length(\HS{(\FF^*)_{\mm}}(K,T^*))
$$
which proves (iv).
\end{proof}

\begin{thm}
\label{magic}
Suppose that hypotheses {\rm\ref{h.1}} through {\rm\ref{h.7}} 
of \S\ref{sscr} are satisfied.
Then the $R$-module $\SS_r(T)$ is free of rank one, and 
for every $\n\in\NN$, the image of  
the projection map $\SS_r(T) \to \Y{\n}$ is $\Y{\n}'$.
\end{thm}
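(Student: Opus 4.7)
The plan is to isolate a cofinal subfamily $\NN_0 \subset \NN$ over which the transition maps defining $\SS_r(T)$ are isomorphisms between free rank-one $R$-modules; this delivers both assertions of the theorem almost immediately from Lemma \ref{ylem}.

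First I would define
$$
\NN_0 := \{\n \in \NN : \HS{(\FF^*)_\n}(K,T^*) = 0\}
$$
and verify that $\NN_0$ is cofinal in $\NN$ under divisibility. Given $\mm \in \NN$, choose generators $c_1,\ldots,c_t$ of the finite module $\HS{(\FF^*)_\mm}(K,T^*)[\m]$ and apply \cite[Proposition 3.6.1]{kolysys} to produce primes $\l_1,\ldots,\l_t$, not in $\Sigma(\FF)$ and not dividing $\mm$, with $\loc_{\l_i}(c_i) \ne 0$; then $\mm \prod_i \l_i$ lies in $\NN_0$ and is divisible by $\mm$. This is exactly the argument of Lemma \ref{ylem}(ii) applied to the smaller Selmer group $\HS{(\FF^*)_\mm}(K,T^*)$, and it is the only step requiring genuine new input. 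Cofinality gives
$$
\SS_r(T) = \varprojlim_{\n \in \NN} \Y{\n} = \varprojlim_{\n \in \NN_0} \Y{\n}.
$$

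Next I would unpack Lemma \ref{ylem} inside $\NN_0$. For $\n \in \NN_0$, part (iii) says $\Y{\n}$ is free of rank one over $R$. For $\mm \mid \n$ with both $\mm,\n \in \NN_0$, part (iv) gives $\Psi_{\n,\mm}(\Y{\n}) = \Y{\mm}' = \Y{\mm}$, the last equality because $\mm \in \NN_0$. Since a surjection between free rank-one $R$-modules is given by multiplication by a unit, every transition map of the inverse system $(\Y{\n})_{\n \in \NN_0}$ is an isomorphism. Therefore $\SS_r(T)$ is canonically isomorphic to $\Y{\n_0}$ for any fixed $\n_0 \in \NN_0$, and in particular is free of rank one.

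For the image statement at an arbitrary $\n \in \NN$, pick $\n' \in \NN_0$ with $\n \mid \n'$ (using cofinality) and factor the projection as $\SS_r(T) \isom \Y{\n'} \map{\Psi_{\n',\n}} \Y{\n}$. The first arrow is an isomorphism by the previous step, while Lemma \ref{ylem}(iv) identifies the image of $\Psi_{\n',\n}$ with $\Y{\n}'$; compatibility of projections from the inverse limit (Proposition \ref{premagic}) shows this image is independent of the auxiliary choice of $\n'$. The only real obstacle in the whole argument is verifying cofinality of $\NN_0$, and that is a short extension of what has already been established in Lemma \ref{ylem}(ii).
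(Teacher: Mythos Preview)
Your proof is correct and follows the same strategy as the paper: identify a cofinal subfamily of $\NN$ on which the $\Y{\n}$ are free of rank one and the transition maps are isomorphisms, then read off both assertions from Lemma \ref{ylem}. The only difference is in how you establish cofinality of $\NN_0$: you rerun the Cebotarev argument of Lemma \ref{ylem}(ii) relative to each $\mm$, whereas the paper observes that a \emph{single} application suffices. Namely, once you have one $\d \in \NN_0$ from Lemma \ref{ylem}(ii), every multiple of $\d$ in $\NN$ automatically lies in $\NN_0$ (since $\HS{(\FF^*)_{\n}}(K,T^*) \subset \HS{(\FF^*)_{\d}}(K,T^*) = 0$ whenever $\d \mid \n$), and the set of multiples of $\d$ is already cofinal in $\NN$. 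So your ``only step requiring genuine new input'' in fact requires none.

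A minor point of phrasing: the primes $\l_i$ you produce must lie in $\PP$ (so that $\mm\prod_i\l_i \in \NN$), not merely outside $\Sigma(\FF)$; this is what \cite[Proposition 3.6.1]{kolysys} actually delivers under the running hypotheses.
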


\begin{proof} 
Using Lemma \ref{ylem}(ii), choose an $\d \in \NN$ such that $\HS{(\FF^*)_{\d}}(K,T^*) = 0$. 
Then $\HS{(\FF^*)_{\n}}(K,T) = 0$ for every $\n \in \NN$ divisible by $\d$.
Now the theorem follows from Lemma \ref{ylem}(iv).
\end{proof}

\section{Stark systems over discrete valuation rings}

For this section we assume that $R$ is a discrete valuation ring, and 
we fix Selmer data $(T,\FF,\PP,r)$ as in Definition \ref{sddef}.
We assume throughout this section that hypotheses \ref{h.1} 
through \ref{h.6} of \S\ref{sscr} are satisfied.  
For $k > 0$ recall from Definition \ref{pndef} that 
$$
\PPk := \{\l \in \PP : \Il \in \m^k\},
$$
and $\NNk$ is the set of squarefree products of primes in $\PPk$.
By Remark \ref{5.7}, the Selmer data $(T/\m^kT,\FF,\PPk,r)$ 
satisfies \ref{h.1} through \ref{h.7} over the ring $R/\m^k$.
In this section we will define the module $\SS_r(T)$ of Stark 
systems of rank $r$ over $T$, and use the results of \S\ref{Ssec} about 
$\SS_r(T/\m^kT)$ to study $\SS_r(T)$.  

\begin{defn}
\label{yndef2}
For every $\n \in \NN$, define 
\begin{align*}
\W{\n} &:= \oplus_{\l\mid\n} \Hom(\Ht(K_{\l},T/\In T),R/\In), \\
\Y{\n} &:= \wedge^{r+\nu(\n)}\HS{\FF^{\n}}(K,T/\In T) \otimes \wedge^{\nu(\n)}\W{\n}, \\
\Y{\n}' &:= \m^{\length(\HS{(\FF^*)_{\n}}(K,T^*[\In]))}\Y{\n}.
\end{align*}
A {\em Stark system of rank $r$} for $T$ (more precisely, for $(T,\FF,\PP)$)
is a collection $\{\epsilon_\n \in \Y{\n} : \n\in\NN\}$ such that if 
$\n\in\NN$ and $\mm \mid \n$, then
$$
\Psi_{\n,\mm}(\epsilon_\n) = \bar\epsilon_\mm
$$
where $\bar\epsilon_\mm$ is the image of $\epsilon_\mm$ in $\Y{\mm} \otimes R/\In$, 
and $\Psi_{\n,\mm} : \Y{\n} \to \Y{\mm} \otimes R/\In$ is the map 
of Definition \ref{4.3} applied to $T/\In T$ and $R/\In$.
Denote by $\SS_r(T) = \SS_r(T,\FF,\PP)$ the $R$-module of Stark systems for $T$.
\end{defn}

\begin{lem}
\label{surjity}
If $j \le k$, then the projection map $T/\m^k T \to T/\m^j T$ and restriction to $\PPk$ 
induce a surjection and an isomorphism, respectively
$$
\xymatrix@C=17pt{
\SS_r(T/\m^k T,\PPk) \ar@{->>}[r] & \SS_r(T/\m^j T,\PPk) & \ar_-{\;\;\sim}[l] \SS_r(T/\m^j T,\PP_j)
}
$$
\end{lem}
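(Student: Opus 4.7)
The plan is to apply Theorem \ref{magic} to the three Stark-system modules in sight in order to reduce both assertions to concrete statements about a single well-chosen component $\Y{\d}$.

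By Remark \ref{5.7}, each of the Selmer data $(T/\m^kT,\FF,\PPk,r)$, $(T/\m^jT,\FF,\PPk,r)$, and $(T/\m^jT,\FF,\PP_j,r)$ satisfies hypotheses \ref{h.1}--\ref{h.7} over the appropriate artinian quotient of $R$, so Theorem \ref{magic} makes the three corresponding Stark-system modules free of rank one over $R/\m^k$, $R/\m^j$, and $R/\m^j$ respectively. Using Lemma \ref{ylem}(ii) applied to $T/\m^kT$, combined with Lemma \ref{okl} under \ref{h.2} to guarantee that the Cebotarev primes produced there already lie in $\PPk$, I choose $\d\in\NNk$ with $\HS{(\FF^*)_{\d}}(K,T^*[\m^k]) = 0$. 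Since $T^*[\m^j] \hookto T^*[\m^k]$, Proposition \ref{stprops}(ii) then gives $\HS{(\FF^*)_{\d}}(K,T^*[\m^j]) = 0$ as well. Hence in all three contexts $\Y{\d}' = \Y{\d}$ is free of rank one, and by Theorem \ref{magic} each projection $\SS_r \to \Y{\d}$ is an isomorphism between free rank-one modules.

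For the surjection $\SS_r(T/\m^kT,\PPk) \onto \SS_r(T/\m^jT,\PPk)$: the reduction $T/\m^kT \onto T/\m^jT$ induces a comparison map on $\Y{\d}$, so by the isomorphisms just produced it suffices to show this comparison is surjective. Proposition \ref{stprops}(i) applied to $(T/\m^kT,\FF^\d)$ over $R/\m^k$ with $i=k-j$ shows that $\HS{\FF^\d}(K,T/\m^kT) \to \HS{\FF^\d}(K,T/\m^jT)$ has kernel killed by $\m^{k-j}$, while Corollary \ref{threefive}(ii) for our $\d$ makes source and target free of ranks $r+\nu(\d)$ over $R/\m^k$ and $R/\m^j$ respectively; a length count then forces surjectivity. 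The corresponding reduction on $\W{\d}$ is surjective since each $\Ht(K_\l,-)$ is free of rank one, and passing to top exterior powers yields surjectivity of $\Y{\d}(T/\m^kT) \to \Y{\d}(T/\m^jT)$, which is what was needed.

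For the isomorphism $\SS_r(T/\m^jT,\PP_j) \isomm \SS_r(T/\m^jT,\PPk)$: restriction from $\NN_j$ to $\NNk$ commutes with projection to the $\d$-component (since $\d\in\NNk$), so the restriction map is compatible with the two isomorphisms $\SS_r \isom \Y{\d}(T/\m^jT)$ produced above, and is therefore itself an isomorphism. The main technical obstacle will be confirming the surjectivity of the reduction map on the Selmer groups $\HS{\FF^\d}(K,T/\m^iT)$ via Proposition \ref{stprops}(i) together with the length calculation from Corollary \ref{threefive}(ii); once that is in hand, the passage through exterior powers and back to the Stark-system modules is formal.
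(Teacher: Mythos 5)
Your proof is correct and follows essentially the same route as the paper: choose a core vertex $\d\in\NNk$, use Theorem \ref{magic} to identify all three Stark-system modules with the free rank-one module $\Y{\d}$, and check the induced maps on that component. The only difference is that you supply the length-count justification for the surjectivity of $\Y{\d}(T/\m^kT)\to\Y{\d}(T/\m^jT)$, which the paper simply asserts by writing the bottom row as $\Y{\n}\otimes R/\m^k\onto\Y{\n}\otimes R/\m^j$; your added detail is sound.
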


\begin{proof}
Let $\n \in \NNk$ be such that 
$\HS{(\FF^*)_{\n}}(K,T^*[\m]) = 0$.  
Then by Theorem \ref{magic}, projecting to $\Y{\n}$ 
gives a commutative diagram with vertical isomorphisms
$$
\xymatrix{
\SS_r(T/\m^k T,\PPk) \ar[r]\ar^{\cong}[d] & \SS_r(T/\m^j T,\PPk) \ar^{\cong}[d]
   & \ar[l]\ar^{\cong}[d] \SS_r(T/\m^j T,\PP_j) \\
\Y{\n} \otimes R/\m^k \ar@{->>}[r] & \Y{\n} \otimes R/\m^j & \Y{\n} \otimes R/\m^j \ar_{=}[l]
}
$$
Since the bottom maps are a surjection and an isomorphism, so are the top ones.
\end{proof}

\begin{prop}
\label{betprp}
The natural maps $T \onto T/\m^k$ and $\PPk \hookto \PP$ induce an isomorphism
$$
\SS_r(T,\PP) \map{\;\sim\;} \varprojlim \SS_r(T/\m^k T,\PPk)
$$
where the inverse limit is with respect to the maps of Lemma \ref{surjity}.
\end{prop}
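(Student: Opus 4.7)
The plan is to exploit an elementary observation: since every $\Il$ for $\l\in\PP$ is a power of $\m$, for each $\n\in\NN$ the ideal $\In$ also equals a power of $\m$, say $\m^{k(\n)}$ with $k(\n):=\min_{\l\mid\n}k_\l$ (where $\Il=\m^{k_\l}$). In particular $\n\in\NNk$ if and only if $k\le k(\n)$, and over the ring $R/\m^{k(\n)}$ the ideals $\Il$ for $\l\mid\n$ all vanish, so the modules $\Y{\n}$ (and $\Y{\n}'$) built from the data $(T,\FF,\PP)$ are \emph{canonically} identified with the ones built from $(T/\m^{k(\n)}T,\FF,\PP_{k(\n)})$. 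This identification is the main device on which the whole argument runs.

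First I construct the map. For each $k\ge 1$ and each $\n\in\NNk$, the projection $T/\In T\onto T/\m^k T$ (which exists because $\In\subset\m^k$) induces a natural reduction $\Y{\n}(T)\to\Y{\n}(T/\m^k T)$; a direct verification shows these commute with the comparison maps $\Psi_{\n,\mm}$ used in Definition \ref{yndef2}, so they assemble into an $R$-linear map $\SS_r(T,\PP)\to\SS_r(T/\m^k T,\PPk)$. A second check, using that the transition maps of Lemma \ref{surjity} are defined via the same reduction $T/\m^k T\to T/\m^j T$ followed by restriction from $\NNk$ to $\NN_j$, shows these maps are compatible as $k$ varies, and hence give a homomorphism
$$
\Phi\,:\,\SS_r(T,\PP)\too\varprojlim_k\SS_r(T/\m^k T,\PPk).
$$

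For injectivity of $\Phi$, suppose $\{\epsilon_\n\}_{\n\in\NN}\in\ker\Phi$. For each $\n\in\NN$ take $k=k(\n)$; then $\n\in\NNk$ and the image of $\epsilon_\n$ in $\Y{\n}(T/\m^{k(\n)}T)$ vanishes. Under the canonical identification $\Y{\n}(T)=\Y{\n}(T/\m^{k(\n)}T)$ noted above, this image is $\epsilon_\n$ itself, so $\epsilon_\n=0$. For surjectivity, given a compatible family $\{\epsilon^{(k)}\}$ in the inverse limit, set $\epsilon_\n:=\epsilon^{(k(\n))}_\n$, viewed in $\Y{\n}(T)$ via the canonical identification. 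To verify the required compatibility $\Psi_{\n,\mm}(\epsilon_\n)=\bar\epsilon_\mm$ whenever $\mm\mid\n$, observe that $k(\n)\le k(\mm)$ (the minimum over divisors of $\n$ is no larger than the minimum over divisors of $\mm$), so both $\n$ and $\mm$ lie in $\NN_{k(\n)}$. The within-level compatibility of $\epsilon^{(k(\n))}$ gives $\Psi_{\n,\mm}(\epsilon^{(k(\n))}_\n)=\bar\epsilon^{(k(\n))}_\mm$; the between-level compatibility (applied at the pair $k(\n)\le k(\mm)$) identifies $\epsilon^{(k(\n))}_\mm$ with the reduction of $\epsilon^{(k(\mm))}_\mm=\epsilon_\mm$ modulo $\m^{k(\n)}$, i.e.\ with $\bar\epsilon_\mm$. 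Combining these gives the required identity.

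The only real difficulty in this plan is the bookkeeping: one must keep straight, at each step, precisely which $\Y{\n}$ module is being considered ($T$-version vs.\ $T/\m^k T$-version), and verify that the canonical identifications are functorial with respect to both the within-level maps $\Psi_{\n,\mm}$ and the between-level reduction and restriction maps of Lemma \ref{surjity}. Once this is carefully laid out, everything is formal and uses no properties beyond the structural facts already recorded in Definition \ref{yndef2} and Lemma \ref{surjity}.
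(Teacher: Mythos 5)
Your argument is essentially the paper's: both proofs rest on the observation that $\In$ equals the power $\m^{k(\n)}$ of the maximal ideal, so that the module $\Y{\n}$ built from $T$ is literally the $\n$-stalk at level $k(\n)$, and injectivity and surjectivity then follow by evaluating each coordinate at its own level $k=k(\n)$. The one place your uniform recipe breaks down is the vertex $\n=1$: there $I_1=0$, which is not $\m^k$ for any finite $k$ because $R$ is a discrete valuation ring, so $k(1)$ is undefined and there is no canonical identification of $\Y{1}(T)=\wedge^r\HF(K,T)$ with a single finite-level stalk. For that vertex you must argue separately, as the paper does, using $\wedge^r\HF(K,T)=\varprojlim_k\wedge^r\HF(K,T/\m^kT)$: for injectivity, a nonzero $\epsilon_1$ has nonzero image in $\wedge^r\HF(K,T/\m^kT)$ for $k$ sufficiently large, and for surjectivity one defines $\epsilon_1$ as the limit of the classes $\epsilon^{(k)}_1$ (and checks compatibility with $\Psi_{\n,1}$ by passing to a large enough finite level). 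With that case supplied, your proof coincides with the one in the paper.
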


\begin{proof}
Suppose $\ssys \in \SS_r(T)$ is nonzero.  Then we can find an $\n$ 
such that $\epsilon_\n \ne 0$ in $\Y{\n}$.
If $\n \ne 1$ then $\In \ne 0$, and we let $k$ be such that $\m^k = \In$.  
If $\n = 1$ choose $k$ so that  
$\epsilon_1 \ne 0$ in $\wedge^r\HF(K,T/\m^{k} T)$.
In either case $\In \subset \m^k$, and the image 
of $\ssys$ in $\SS_r(T/\m^k T,\PPk)$ is nonzero.  
Thus the map in the proposition is injective.

Now suppose $\{\ssyi{k}\} \in \varprojlim \SS_r(T/\m^k,\PPk)$.  If $\n \in \NN$ 
and $\n \ne 1$, let $j$ be such that $\In = \m^j$ and  define 
$$
\epsilon_{\n} := \kapi{j}_{\n} \in \Y{\n}.
$$  
If $\n = 1$, define 
$$
\epsilon_1 = \lim_{k\to\infty} \kapi{k}_{1} \in \lim_{k\to\infty}\wedge^r\HF(K,T/\m^kT) 
   = \wedge^r\HF(K,T) = \Y{1}.
$$
It is straightforward to verify that this defines an element $\ssys \in \SS_r(T,\PP)$ 
that maps to $\ssyi{k} \in \SS_r(T/\m^k T,\PPk)$ for every $k$.
Thus the map in the proposition is surjective as well.
\end{proof}

\begin{thm}
\label{rankdvr}
Suppose $R$ is a discrete valuation ring and hypotheses \ref{h.1} through \ref{h.6} hold.
Then the $R$-module of Stark systems of rank $r$, $\SS_r(T,\PP)$, is free of rank one, generated by 
a Stark system $\ssys$ whose image in $\SS_r(T/\m T,\PP)$ is nonzero.  The map 
$\SS_r(T,\PP) \to \SS_r(T/\m^k,\PPk)$ is surjective for every $k$.
\end{thm}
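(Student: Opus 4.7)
The strategy is to combine the three inputs that the author has set up earlier: Proposition \ref{betprp} identifies $\SS_r(T,\PP)$ with the inverse limit $\varprojlim \SS_r(T/\m^k T,\PPk)$, Remark \ref{5.7} together with Theorem \ref{magic} shows each term of this inverse system is free of rank one over $R/\m^k$, and Lemma \ref{surjity} guarantees that the transition maps in the system are surjective. The proof then reduces to a standard inverse-limit argument: a system of free rank-one modules over the $R/\m^k$ with surjective transition maps has an inverse limit that is free of rank one over $R = \varprojlim R/\m^k$.

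The main construction is inductive. First I would apply Theorem \ref{magic} and Remark \ref{5.7} to the Selmer data $(T/\m^k T,\FF,\PPk,r)$, obtaining for each $k$ that $\SS_r(T/\m^k T,\PPk)$ is a free $R/\m^k$-module of rank one. Next, starting from any generator $\epsilon^{(1)}$ of the one-dimensional $\k$-vector space $\SS_r(T/\m T,\PP_1)$, and supposing compatible generators $\epsilon^{(1)},\dots,\epsilon^{(k)}$ have been chosen, I would use Lemma \ref{surjity} to pick any lift $\epsilon^{(k+1)}\in\SS_r(T/\m^{k+1} T,\PP_{k+1})$ of $\epsilon^{(k)}$. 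Such a lift is automatically a generator, since its further reduction mod $\m$ is the nonzero element $\epsilon^{(1)}$ of a free rank-one module over the residue field $\k$, so by Nakayama $\epsilon^{(k+1)}$ generates the free $R/\m^{k+1}$-module $\SS_r(T/\m^{k+1}T,\PP_{k+1})$. The compatible sequence $\{\epsilon^{(k)}\}$ then defines an element $\ssys\in\varprojlim\SS_r(T/\m^k T,\PPk)\cong\SS_r(T,\PP)$ via Proposition \ref{betprp}.

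To see that $\ssys$ generates $\SS_r(T,\PP)$ freely: given $\ssys'\in\SS_r(T,\PP)$, write its image in $\SS_r(T/\m^k T,\PPk)$ as $r_k\cdot\epsilon^{(k)}$ for a unique $r_k\in R/\m^k$. Compatibility of $\{\ssys'_k\}$ and $\{\epsilon^{(k)}\}$ under the transition maps forces $r_{k+1}\equiv r_k\pmod{\m^k}$, so the $r_k$ assemble into an element $r\in\varprojlim R/\m^k=R$ with $\ssys'=r\cdot\ssys$. Uniqueness of $r$ is immediate: if $r\cdot\ssys=0$ then $r\in\m^k$ for all $k$, and $\bigcap_k\m^k=0$ since $R$ is a discrete valuation ring. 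The image of $\ssys$ in $\SS_r(T/\m T,\PP)\cong\SS_r(T/\m T,\PP_1)$ (the isomorphism coming from Lemma \ref{surjity} with $j=k=1$) is the generator $\epsilon^{(1)}$, hence nonzero. Finally, the surjectivity of the projection $\SS_r(T,\PP)\to\SS_r(T/\m^k T,\PPk)$ is automatic: the image contains $\epsilon^{(k)}$, which generates the target as an $R/\m^k$-module.

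There is no real obstacle here beyond bookkeeping; the only point that requires a moment's thought is checking that any lift of a generator through a surjective transition map is again a generator, which is handled by Nakayama applied to the free rank-one modules over the local rings $R/\m^k$ together with the fact that the bottom reduction $\epsilon^{(1)}$ was chosen nonzero. All deeper content (the rank-one freeness of the finite-level modules, the inverse-limit description, and the surjectivity of transition maps) has already been packaged into Theorem \ref{magic}, Proposition \ref{betprp}, and Lemma \ref{surjity}.
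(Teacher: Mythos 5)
Your proposal is correct and follows the paper's own proof exactly: the paper also cites Theorem \ref{magic} for rank-one freeness at each finite level, Lemma \ref{surjity} for surjectivity of the transition maps, and Proposition \ref{betprp} for the inverse-limit identification, leaving the standard lifting-of-generators argument implicit. You have merely written out that last routine step in full.
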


\begin{proof}
By Theorem \ref{magic},   
$\SS_r(T/\m^kT,\PPk)$ is free of rank one over $R/\m^k$ for every $k$.  
The maps $\SS_r(T/\m^{k+1}T,\PP_{k+1}) \to \SS_r(T/\m^kT,\PPk)$ 
are surjective by Lemma \ref{surjity}, so the theorem follows from  
Proposition \ref{betprp}.
\end{proof}

\section{Structure of the dual Selmer group}
\label{extra}

In this section $R$ is either a principal artinian local ring or a discrete 
valuation ring.  We let $k := \length(R)$, so $k$ is finite in the artinian case 
and $k = \infty$ in the discrete valuation ring case.

Fix Selmer data $(T,\FF,\PP,r)$.  
We continue to assume that hypotheses \ref{h.1} 
through \ref{h.6} are satisfied, 
and if $R$ is artinian we assume that \ref{h.7} is satisfied as well.
Recall that %$\Tb = T/\m T$, and 
if $\n\in\NN$ then $\nu(\n)$ denotes the number of prime divisors of $\n$.

\begin{defn}
\label{allde}
Define functions $\smesh, \mesh, \divity_{\ssys} \in \Maps(\NN,\Z_{\ge 0}\cup\{\infty\})$
\begin{itemize}
\item
$\smesh(\n) = \length(\HS{(\FF^*)_{\n}}(K,T^*))$,
\item
$\mesh(\n) = \length(\HFfns(K,T^*))$,
\end{itemize}
and if $\ssys \in \SS_r(T)$ is a Stark system
\begin{itemize}
\item
$\divity_{\ssys}(\n) = \max\{j : \epsilon_\n \in \m^j\Y{\n}\}$.
\end{itemize}
Define $\partial : \Maps(\NN,\Z_{\ge 0}\cup\{\infty\}) \to \Maps(\Z_{\ge 0},\Z_{\ge 0}\cup\{\infty\})$ 
by
$$
\partial f(i) = \min\{f(\n) : \text{$\n \in \NN$ and $\nu(\n) = i$}\}.
$$ 
%
%Let $\Maps_{\noninc}(\Z_{\ge 0},\Z_{\ge 0}\cup\{\infty\})$ denote the non-increasing maps, i.e., 
%the ones satisfying $g(i+1) \le g(i)$ for every $i \ge 0$.  If 
%$g \in \Maps_{\noninc}(\Z_{\ge 0},\Z_{\ge 0}\cup\{\infty\})$, then the {\em order of vanishing} 
%of $g$ is
%$$
%\ord(g) = \max\{i : g(i) = \infty\}
%$$
%and we define $\Delta(g) \in \Maps(\Z_{> \ord(g)},\Z_{\ge 0})$ by
%$\Delta(g) = g(i)-g(i+1)$.
\end{defn}

\begin{defn}
\label{allder}
The {\em order of vanishing} of a nonzero Stark system $\ssys \in \SS_r(T)$ is 
$$
\ordv{\ssys} := \min\{\nu(\n) : \n \in \NN, \epsilon_\n \ne 0\}
   = \min\{i : \partial\divity_{\ssys}(i) \ne \infty\}.
$$
%We will show in Theorems \ref{struc} and \ref{korank} below that 
%$\partial(\divity_{\ssys}) \in \Maps_{\noninc}(\Z_{\ge 0},\Z_{\ge 0}\cup\{\infty\})$,
%and it follows that $\ord(\ssys) = \ord(\divity_{\ssys})$.
%
We say $\ssys \in \SS_r(T)$ is 
{\em primitive} if its image in $\SS_r(T/\m T)$ is nonzero.
%For $\ssys \in \SS_r(T)$ and $i \in \Z^+$, define
%$$
%\der{i}(\ssys) := \max\{j : \text{$\epsilon_\n \in \m^j\Y{\n}$ 
%   for every $\n \in \NN$ with $\nu(\n) = i$}\}
%$$
%(we allow $\der{i}(\ssys) = \infty$), and 
%$$
%\der{\infty}(\ssys) := \min\{\der{i}(\ssys) : i \ge 0 \}.
%$$
We also define the  sequence of {\em elementary divisors} 
$$
d_{\ssys}(i) := \partial\divity_{\ssys}(i) - \partial\divity_{\ssys}(i+1), \quad i \ge \ordv{\ssys}.
$$
\end{defn}

Note that $\partial\divity_{\ssys}(i) = \infty$ if $i < \ordv{\ssys}$;  
Theorems \ref{struc} and \ref{korank} below show that the converse is true as well, 
so the $d_{\ssys}(i)$ are well-defined and finite.

\begin{prop}
\label{6.12}
Suppose $R$ is artinian, 
and $\HFs(K,T^*) \cong \oplus_{i\ge 1} R/\m^{e_i}$ with $e_1 \ge e_2 \ge \cdots$.  
Then for every $t \ge 0$, 
$$
\partial\mesh(t) = \partial\smesh(t) = \sum_{i > t} e_i.
$$
\end{prop}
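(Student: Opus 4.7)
The plan is to prove $\partial\smesh(t) = \sum_{i>t} e_i$ by a direct argument and then deduce $\partial\mesh(t) = \partial\smesh(t)$ from Poitou--Tate duality. Under hypothesis \ref{h.7}, Lemma \ref{applemma} makes $\Hf(K_\l, T^*)$ free of rank one over $R$ for every $\l\in\PP$. Since a class in $\HFs(K,T^*)$ is automatically unramified at $\l\in\PP$, the strict condition at $\l\mid\n$ is equivalent to the vanishing of its finite localization, so
$$
\HS{(\FF^*)_\n}(K,T^*) = \ker\Bigl(\HFs(K,T^*) \to \bigoplus_{\l\mid\n}\Hf(K_\l, T^*)\Bigr).
$$
Writing $M := \HFs(K,T^*) \cong \bigoplus_i R/\m^{e_i}$, the lower bound $\partial\smesh(t) \ge \sum_{i>t} e_i$ becomes pure linear algebra: for $t$ homomorphisms $\phi_1,\ldots,\phi_t\colon M \to R$, the joint image is a quotient of $M$ sitting inside $R^t$, so its elementary divisors are dominated termwise by $(e_1,\ldots,e_t)$, giving $\length(\image) \le \sum_{i\le t} e_i$. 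For the matching upper bound I would use hypothesis \ref{h.3} to inject $M$ into $\Hom(G_{\cH_M(T)},T^*)^{G_K}$, and iteratively apply the Chebotarev argument of \cite[Proposition 3.6.1]{kolysys} inside $\PP(\cH_M,\tau)$ (available by \ref{h.2}) to choose $\l_1,\ldots,\l_t$ whose $\loc^{\f}_{\l_i}$ realize the projections onto the elementary divisor summands of $M$; for $\n = \l_1\cdots\l_t$ this yields $\smesh(\n) = \sum_{i>t} e_i$.

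For the second equality, the inclusion $\HS{(\FF^*)_\n}(K,T^*) \subset \HFfns(K,T^*)$ gives $\smesh(\n) \le \mesh(\n)$ always, so $\partial\smesh(t) \le \partial\mesh(t)$. For the reverse, I would show that the $\n$ built above can be chosen so that in addition $\mesh(\n) = \smesh(\n)$. Note that $\HFfn(K,T)$ is precisely the kernel of the finite localization $\HS{\FF^\n}(K,T) \to \bigoplus_{\l\mid\n}\Hf(K_\l, T)$ (transverse at $\l$ is the same as vanishing of $\loc^{\f}_\l$), so combining with Corollary \ref{threefive}(i),(ii) gives
$$
\mesh(\n) - \smesh(\n) = tk - \length A(\n),
$$
where $A(\n)$ denotes the image of this $T$-side localization. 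Hence it suffices to find $\n$ with $\nu(\n) = t$ that simultaneously achieves both the minimum $\smesh(\n) = \sum_{i>t} e_i$ and the surjectivity $A(\n) = \bigoplus_{\l\mid\n}\Hf(K_\l, T)$.

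The main obstacle is arranging both conditions at the same primes $\l_i$ via a refined Chebotarev construction. By Corollary \ref{threefive}(ii), $\HS{\FF^\n}(K,T)$ contains a free direct summand of rank $r+t$, so its finite localization onto the rank-$t$ module $\bigoplus_{\l\mid\n}\Hf(K_\l, T)$ can in principle be surjective. I would run a parallel Chebotarev construction on the $T$-side, in which at each inductive step the condition that the new prime extends $A$ by the full local factor $\Hf(K_{\l_i}, T)$ defines a positive-density subset of $\PP(\cH_M,\tau)$. Hypothesis \ref{h.4} ($\Tb \not\cong \Tb^*$ or $p > 3$) guarantees that the $T$-side and $T^*$-side Chebotarev conditions sit in essentially disjoint quotients of the relevant big Galois group, so the two sets of conditions are compatible. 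An inductive selection then produces $\l_1,\ldots,\l_t$ satisfying both constraints, giving $\mesh(\n) = \smesh(\n) = \sum_{i>t} e_i$ and completing the proof.
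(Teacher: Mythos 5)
Your proposal is correct and follows essentially the same route as the paper: the termwise linear-algebra bound giving $\smesh(\n) \ge \sum_{i>t} e_i$, followed by an inductive Chebotarev choice (via \cite[Proposition 3.6.1]{kolysys}) of primes at which both a class generating $\m^{k-1}\HFfn(K,T)$ and a class generating $\m^{e_i-1}$ of the current dual Selmer group localize nontrivially. The only difference is bookkeeping in the second equality: where you deduce $\mesh(\n)=\smesh(\n)$ from the length formula of Corollary \ref{threefive} together with surjectivity of the finite localization on $\HS{\FF^\n}(K,T)$, the paper invokes \cite[Theorem 4.1.7(ii)]{kolysys} at each inductive step to get $\HFfnls(K,T^*) = \HS{\FF^\l(\n)^*}(K,T^*)$ --- two equivalent packagings of the same Poitou--Tate input.
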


\begin{proof}
Suppose $\n \in \NN$ and $\nu(\n) = t$.  Consider the map
$$
\HFs(K,T^*) \too \dirsum{\l \mid \n} \Hf(K_\l,T^*).
$$
The right-hand side is free of rank $t$ over $R$, and $R$ is principal, so the image is 
a quotient of $\HFs(K,T^*)$ generated by (at most) $t$ elements.  
Hence the image has length at most $\sum_{i \le t} e_i$, so 
the kernel has length at least $\sum_{i>t} e_i$.  
But by definition this kernel is $\HS{(\FF^*)_{\n}}(K,T^*)$, which is contained in 
$\HFfns(K,T^*)$, so
\begin{equation}
\label{eq:3}
\mesh(\n) \ge \smesh(\n) \ge \sum_{i>t} e_i.
\end{equation}

We will prove by induction on $t$ that $\n$ can be chosen so that 
$\nu(\n) = t$ and $\HFfns(K,T^*) \cong \oplus_{i>t} R/\m^{e_i}$.  For such 
an $\n$ equality holds in \eqref{eq:3}, and the lemma follows.  
When $t = 0$ we can just take $\n = 1$.

Suppose we have an $\n$ with $\nu(\n) = t-1$ and 
$\HFfns(K,T^*) \cong \oplus_{i>t-1} R/\m^{e_i}$.
Since $\cm(T) > 0$, Corollary \ref{threefive} shows that 
$\m^{k-1}\HFfn(K,T) \ne 0$.
Fix a nonzero element $c \in \m^{k-1}\HFfn(K,T) \subset \HFfn(K,T)[\m]$.
If $e_t > 0$ then choose a nonzero element 
$c' \in \m^{e_t-1}\HFfns(K,T^*) \subset \HFfns(K,T^*)[\m]$.  
By \cite[Proposition 3.6.1]{kolysys} we can use the Cebotarev theorem 
to choose a prime $\l \in \PP$ such that 
the localization $\loc_\l(c) \ne 0$ and, if $e_t > 0$, such that 
$\loc_\l(c') \ne 0$ as well.

Since $\Hf(K_\l,T)$ is free of rank one over $R$, and (by our choice of $\l$)
the localization of $\m^{k-1}\HFfn(K,T)$ at $\l$ is nonzero, it follows that 
the localization map $\HFfn(K,T) \to \Hf(K_\l,T)$ is surjective.
Similarly, we have that $\HS{\FF(\n)^*}(K,T^*)$ has exponent $\m^{e_t}$, and 
if $e_t > 0$ then the localization of $\m^{e_t-1}\HFfns(K,T^*)$ at $\l$ 
is nonzero, so 
$$
\HS{\FF(\n)^*}(K,T^*)/\HS{\FF^\l(\n)^*}(K,T^*) \cong \loc_\l(\HS{\FF(\n)^*}(K,T^*)) \cong R/\m^{e_t}
$$
and therefore $\HS{\FF^\l(\n)^*}(K,T^*) \cong \oplus_{i > t} R/\m^{e_i}$.
By \cite[Theorem 4.1.7(ii)]{kolysys} we have 
$\HFfnls(K,T^*) = \HS{\FF^\l(\n)^*}(K,T^*)$, so $\n\l \in \NN$ has the desired property.
\end{proof}

\begin{prop}
\label{struclem}
Suppose $R$ is artinian of length $k$, and $\ssys \in \SS_r(T)$.  
Fix $s \ge 0$ such that $\ssys$ generates $\m^s \SS_r(T)$,
and nonnegative integers $e_1 \ge e_2 \ge \cdots$ such that
$$
\HFs(K,T^*) \cong \oplus_i R/\m^{e_i}.
$$ 
Then for every $t \ge 0$, 
$$
\partial\divity_{\ssys}(t) = 
\begin{cases}
s+\sum_{i>t} e_i & \text{if $s+\sum_{i>t} e_i < k$}, \\
\infty & \text{if $s+\sum_{i>t} e_i \ge k$}.
\end{cases}
$$
\end{prop}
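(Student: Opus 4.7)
The plan is to convert the hypothesis about $\ssys$ generating $\m^s\SS_r(T)$ into $\m$-adic information about each projection $\epsilon_\n$, and then take $\partial$ and invoke Proposition \ref{6.12}.

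First I would apply Theorem \ref{magic}: the module $\SS_r(T)$ is free of rank one over $R$, and for each $\n \in \NN$ the image of the projection $\SS_r(T) \to \Y{\n}$ is precisely $\Y{\n}' = \m^{\smesh(\n)}\Y{\n}$. Since $\ssys$ is a generator of the cyclic module $\m^s\SS_r(T)$, its image $\epsilon_\n$ is therefore a generator of $\m^s\Y{\n}' = \m^{s+\smesh(\n)}\Y{\n}$ as an $R$-module.

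Next I would read off $\divity_{\ssys}(\n)$ from this. By Lemma \ref{ylem}(i), $\m^{\smesh(\n)}\Y{\n}$ is cyclic of length $\max\{k - \smesh(\n),\, 0\}$, so $\epsilon_\n = 0$ if and only if $s + \smesh(\n) \ge k$, in which case $\divity_{\ssys}(\n) = \infty$. When $s + \smesh(\n) < k$ the submodule $\m^{s+\smesh(\n)}\Y{\n}$ is nonzero; if $\epsilon_\n$ also lay in $\m^{s+\smesh(\n)+1}\Y{\n}$, then since $\epsilon_\n$ generates $\m^{s+\smesh(\n)}\Y{\n}$ we would have $\m \cdot \m^{s+\smesh(\n)}\Y{\n} = \m^{s+\smesh(\n)}\Y{\n}$, and Nakayama's lemma (valid because $\Y{\n}$ is a finitely generated $R$-module) would force $\m^{s+\smesh(\n)}\Y{\n} = 0$, a contradiction. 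Hence in this regime $\divity_{\ssys}(\n) = s + \smesh(\n)$ exactly.

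Finally, the function $\n \mapsto \divity_{\ssys}(\n)$ agrees with $\n \mapsto s + \smesh(\n)$ wherever the latter is $< k$ and equals $\infty$ elsewhere, so minimizing over $\n$ with $\nu(\n) = t$ gives $\partial\divity_{\ssys}(t) = s + \partial\smesh(t)$ whenever this is $< k$, and $\infty$ otherwise; substituting $\partial\smesh(t) = \sum_{i>t} e_i$ from Proposition \ref{6.12} completes the proof. The only nontrivial input beyond bookkeeping is Step~2, and there the delicate point is knowing the image of $\SS_r(T)$ in $\Y{\n}$ is genuinely cyclic equal to $\Y{\n}'$, which is exactly what Theorem \ref{magic} together with Lemma \ref{ylem} supplies; after that the Nakayama step is automatic.
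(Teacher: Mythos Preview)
Your proof is correct and follows essentially the same approach as the paper: both use Theorem~\ref{magic} and Lemma~\ref{ylem}(i) to identify $\epsilon_\n$ as a generator of the cyclic module $\m^{s+\smesh(\n)}\Y{\n}$, read off $\divity_\ssys(\n)$ from this, and then apply Proposition~\ref{6.12}. The only cosmetic difference is that the paper first reduces to the case $s=0$ and then treats a generator of $\SS_r(T)$, whereas you carry the parameter $s$ throughout; your explicit Nakayama step is exactly what underlies the paper's one-line assertion that $\epsilon_\n \in \m^{\smesh(\n)+1}\Y{\n}$ iff $\smesh(\n) \ge k$.
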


\begin{proof}
It is enough to prove the proposition when $s = 0$, and the general case will follow.
So we may assume that $\ssys$ generates $\SS_r(T)$.  By Theorem \ref{magic} 
and Lemma \ref{ylem}(i), 
we have that $\epsilon_\n$ generates $\Y{\n}' = \m^{\smesh(\n)}\Y{\n}$, which is cyclic 
of length $\max\{k-\smesh(\n),0\}$.
Hence $\epsilon_\n \in \m^{\smesh(\n)}\Y{\n}$, and $\epsilon_\n \in \m^{\smesh(\n)+1}\Y{\n}$ 
if and only if $\smesh(\n) \ge k$.
Therefore
$$
\partial\divity_{\ssys}(t) = 
\begin{cases}
\partial\smesh(t) & \text{if $\partial\smesh(t) < k$}, \\
\infty & \text{if $\partial\smesh(t) \ge k$}.
\end{cases}
$$
Now the proposition follows from the calculation of $\partial\smesh(t)$ in Lemma \ref{6.12}.
\end{proof}

\begin{thm}
\label{struc}
Suppose $R$ is artinian, $\ssys \in \SS_r(T)$, and $\epsilon_1 \ne 0$.  Then 
$$
\renewcommand{\arraystretch}{1.4}
\arraycolsep=2pt
\begin{array}{ccccccccc}
\partial\divity_{\ssys}(0) & \ge & \partial\divity_{\ssys}(1) 
   & \ge & \partial\divity_{\ssys}(2) & \ge & \cdots, \\
d_{\ssys}(0) & \ge & d_{\ssys}(1) & \ge & d_{\ssys}(2) & \ge & \cdots & \ge & 0,
\end{array}
$$
and
$$
\HFs(K,T^*) \cong \dirsum{i \ge 0} R/\m^{d_{\ssys}(i)}.
$$
\end{thm}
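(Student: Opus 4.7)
The plan is to reduce everything to Proposition \ref{struclem}, which already computes $\partial\divity_\ssys(t)$ in terms of the integer $s$ (defined by $\ssys$ generating $\m^s\SS_r(T)$) and the elementary divisors of $\HFs(K,T^*)$. The theorem just packages those computations together, using the hypothesis $\epsilon_1\neq 0$ to guarantee finiteness.

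First I would invoke Theorem \ref{magic} to see that $\SS_r(T)$ is cyclic, hence there exists $s\ge 0$ with $\ssys$ generating $\m^s\SS_r(T)$. Next, since $R$ is a principal artinian local ring and $\HFs(K,T^*)$ is a finitely generated torsion $R$-module, I would decompose it as $\HFs(K,T^*)\cong\bigoplus_{i\ge 1} R/\m^{e_i}$ with $e_1\ge e_2\ge\cdots\ge 0$ (eventually zero). Since $\epsilon_1\neq 0$ and $\nu(1)=0$, we have $\ordv{\ssys}=0$, so $d_\ssys(i)$ is a candidate invariant for every $i\ge 0$.

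The crucial step is then to show that all the values $\partial\divity_\ssys(t)$ are finite, so that the formula $\partial\divity_\ssys(t)=s+\sum_{i>t}e_i$ from Proposition \ref{struclem} applies without the ``$\infty$'' case. The hypothesis $\epsilon_1\neq 0$ translates into $\partial\divity_\ssys(0)=\divity_\ssys(1)<k$, which by Proposition \ref{struclem} forces $s+\sum_{i\ge 1} e_i<k$. Since this is the largest of the sums in question, every $s+\sum_{i>t} e_i$ is likewise $<k$, and so $\partial\divity_\ssys(t)=s+\sum_{i>t}e_i$ for every $t\ge 0$.

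Once this formula is in hand, both claims fall out by direct computation. Monotonicity of $\partial\divity_\ssys$ is immediate because dropping terms from a sum of non-negative integers can only decrease it. For the elementary divisors I would compute
\[
d_\ssys(i)=\partial\divity_\ssys(i)-\partial\divity_\ssys(i+1)=\sum_{j>i}e_j-\sum_{j>i+1}e_j=e_{i+1},
\]
which is non-increasing and non-negative because the $e_j$ are. Finally, reindexing gives
\[
\HFs(K,T^*)\cong\bigoplus_{i\ge 1} R/\m^{e_i}=\bigoplus_{i\ge 0} R/\m^{e_{i+1}}=\bigoplus_{i\ge 0} R/\m^{d_\ssys(i)},
\]
completing the proof. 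There is no real obstacle here beyond bookkeeping: the serious content was absorbed into Proposition \ref{struclem} (and, underlying it, Proposition \ref{6.12} together with the Cebotarev argument choosing primes $\l$ that detect the top quotient of $\HFs(K,T^*)$). The only thing that requires attention is verifying that $\epsilon_1\neq 0$ is the correct finiteness hypothesis; this is automatic once one notes that $\divity_\ssys(1)<k$ controls the largest sum $s+\sum_{i\ge 1}e_i$.
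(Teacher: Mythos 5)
Your proposal is correct and follows essentially the same route as the paper: both reduce to Proposition \ref{struclem}, observing that $\epsilon_1 \ne 0$ forces $\partial\divity_{\ssys}(0) = s + \sum_{i\ge 1} e_i < k$, hence every $s+\sum_{i>t}e_i < k$ and the formula holds without the infinite case, after which monotonicity and the identification $d_{\ssys}(i) = e_{i+1}$ are immediate bookkeeping. Nothing further is needed.
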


\begin{proof}
Let $s$ be such that $\ssys$ generates $\m^s \SS_r(T)$.  
If $\epsilon_1 \ne 0$ then $\partial\divity_{\ssys}(0) < k$, so in 
Proposition \ref{struclem} we have $\partial\divity_{\ssys}(t) = s + \sum_{i>t} e_i$ for every $t$.
The theorem follows directly.
\end{proof}

If $R$ is a discrete valuation ring then $F$ will denote 
the field of fractions of $R$, and if $M$ is an $R$-module we define 
\begin{itemize}
\item 
$\rank_R M := \dim_{F}M \otimes F$,
\item
$\corank_R M := \rank_R \Hom_R(M,F/R)$,
\item
$M_\dv$ is the maximal divisible submodule of $M$.
\end{itemize}

\begin{prop}
\label{11.4}
Suppose $R$ is a discrete valuation ring, and $\ssys \in\SS_r(T)$ generates $\m^s\SS_r(T)$.  
Let $a := \corank_R(\HFs(K,T^*))$ and write
$$
\HFs(K,T^*)/(\HFs(K,T^*))_\dv \cong \dirsum{i>a} R/\m^{e_i}
$$
with $e_{a+1} \ge e_{a+2} \ge \cdots$. 
Then
$$
\partial\divity_{\ssys}(t) = 
\begin{cases}
\infty & \text{if $t < a$}, \\
s+\sum_{i>t} e_i & \text{if $t \ge a$}.
\end{cases}
$$
\end{prop}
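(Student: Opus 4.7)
The plan is to reduce the proposition to the artinian case, Proposition~\ref{struclem}, by working modulo $\m^k$ for each $k$ and letting $k$ vary. After rescaling $\ssys$ by $\pi^{-s}$ (where $\pi$ is a uniformizer) we may assume $s = 0$ and $\ssys$ generates $\SS_r(T,\PP)$; the general case is then recovered by multiplying back by $\pi^s$.

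By Theorem~\ref{rankdvr}, for each $k \ge 1$ the image $\ssys^{(k)}$ of $\ssys$ in $\SS_r(T/\m^k T,\PPk)$ is a generator of that free, rank-one $R/\m^k$-module, so Proposition~\ref{struclem} applies and yields
$$
\partial\divity_{\ssys^{(k)}}(t) \;=\; \begin{cases} \sum_{i>t} e_i^{(k)}, & \text{if this is } < k, \\ \infty, & \text{otherwise,}\end{cases}
$$
where $e_1^{(k)} \ge e_2^{(k)} \ge \cdots$ are the invariants of $\HFs(K,T^*[\m^k])$ and $\partial$ is taken over $\NNk$. The cartesian hypothesis~\ref{h.5} applied to the short exact sequence $0 \to T^*[\m^k] \to T^* \xrightarrow{\pi^k} T^* \to 0$ (in the spirit of Proposition~\ref{stprops}(ii)) identifies $\HFs(K,T^*[\m^k])$ with $\HFs(K,T^*)[\m^k]$; combined with the assumed decomposition of $\HFs(K,T^*)$, this gives $e_i^{(k)} = k$ for $1 \le i \le a$ and $e_i^{(k)} = \min(k,e_i)$ for $i > a$.

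When $t < a$, the sum $\sum_{i > t} e_i^{(k)}$ contains at least $a - t \ge 1$ terms each equal to $k$, so $\partial\divity_{\ssys^{(k)}}(t) = \infty$ for every $k$. For any $\n \in \NN$ with $\nu(\n) = t$, let $k_\n$ be defined by $\In = \m^{k_\n}$ (with $k_\n = \infty$ when $\In = 0$). If $k_\n < \infty$ I take $k = k_\n$: then $\n \in \NNk$, the reduction $\Y{\n} \to \Y{\n}^{(T/\m^k T)}$ is an isomorphism (both being $R/\m^{k_\n}$-modules), and $\epsilon_\n^{(k)} = 0$ forces $\epsilon_\n = 0$. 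If $k_\n = \infty$ then $\n \in \NNk$ for every $k$, and $\epsilon_\n \in \bigcap_k \m^k \Y{\n} = 0$ by the Krull intersection theorem. This proves the $t < a$ case.

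When $t \ge a$, choosing $k$ larger than both $\max_{i > a} e_i$ and $\sum_{i > t} e_i$, the artinian formula gives $\partial\divity_{\ssys^{(k)}}(t) = \sum_{i > t} e_i$. The upper bound $\partial\divity_\ssys(t) \le \sum_{i>t} e_i$ then follows from choosing an $\n \in \NNk$ that achieves the artinian minimum: by the density of primes with $\Il = 0$ guaranteed by hypothesis~\ref{h.2} and a Chebotarev argument (cf.\ Lemma~\ref{okl}), one can arrange $k_\n = \infty$, so $\divity_\ssys(\n) = \divity_{\ssys^{(k)}}(\n) = \sum_{i>t}e_i$. I expect the main technical obstacle to be the matching lower bound for $\n$ with small $k_\n$: one must rule out $\divity_\ssys(\n) < \sum_{i > t} e_i$, which requires a careful analysis of $\length(\HS{(\FF^*)_\n}(K,T^*)[\m^{k_\n}])$ for such $\n$, most naturally handled by running Corollary~\ref{threefive} level-by-level at $k = k_\n$ and comparing the truncated invariants there with the full $e_i$.
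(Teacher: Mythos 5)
Your overall strategy is exactly the paper's: reduce to the artinian case via Theorem \ref{rankdvr} and Proposition \ref{struclem}, and use Proposition \ref{stprops}(ii) to identify the invariants of $\HFs(K,T^*[\m^k])$ with the truncations $\min\{k,e_i\}$. The case $t<a$ is essentially right (though for $\n=1$ the clean justification that $\epsilon^{(k)}_1=0$ for all $k$ forces $\epsilon_1=0$ is the injectivity of $\Y{1}\to\varprojlim_k(\Y{1}\otimes R/\m^k)$ used in Proposition \ref{betprp}, not the Krull intersection applied to $\bigcap_k\m^k\Y{1}$, since you have not shown that the kernel of reduction is $\m^k\Y{1}$). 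The problems are in the case $t\ge a$. For the upper bound you propose to pick $\n\in\NNk$ achieving the artinian minimum and to ``arrange $k_\n=\infty$'' using ``the density of primes with $\Il=0$.'' When $R$ is a discrete valuation ring there are no such primes: by Definition \ref{pndef} one has $\Il\supseteq[K(\l)_\l:\Kl]R\ne 0$, so $\In\ne 0$ for every $\n\ne 1$, and Lemma \ref{okl} is an artinian statement that does not transfer. The step is, however, unnecessary: since $\n\in\NNk$ means $\In\subseteq\m^k$, the reduction $\Y{\n}\to\Y{\n}\otimes R/\m^k$ carries $\m^j\Y{\n}$ into $\m^j(\Y{\n}\otimes R/\m^k)$, so $\divity_{\ssys}(\n)\le\divity_{\ssyi{k}}(\n)$ automatically, and the inequality $\partial\divity_{\ssys}(t)\le s+\sum_{i>t}e_i$ follows with no Cebotarev input.

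The more serious gap is that you leave the lower bound $\partial\divity_{\ssys}(t)\ge s+\sum_{i>t}e_i$ unproved, calling it the ``main technical obstacle'' and gesturing at a level-by-level comparison via Corollary \ref{threefive}. It is not an obstacle, and the key observation is one you already made in the $t<a$ case: for $\n\ne 1$ with $\In=\m^{k_\n}$, Definition \ref{yndef2} builds $\Y{\n}$ out of $T/\In T$, so $\Y{\n}$ \emph{is} the corresponding module for $T/\m^{k_\n}T$ and $\epsilon_\n=\epsilon^{(k_\n)}_\n$ on the nose. Applying Proposition \ref{struclem} at level $k=k_\n$ gives either $\epsilon_\n=0$ or $\epsilon_\n\in\m^{\,s+\sum_{i>t}\min\{k_\n,e_i\}}\Y{\n}$. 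If $e_i\ge k_\n$ for some $i>t$, or if $s+\sum_{i>t}\min\{k_\n,e_i\}\ge k_\n$, then $\m^{\,s+\sum_{i>t}e_i}\Y{\n}=0$ as well (because $\m^{k_\n}$ kills $\Y{\n}$), so the desired containment holds trivially; otherwise $\min\{k_\n,e_i\}=e_i$ for all $i>t$ and the exponent is exactly $s+\sum_{i>t}e_i$. Either way $\divity_{\ssys}(\n)\ge s+\sum_{i>t}e_i$, which is what was missing. With these two repairs your argument coincides with the proof in the paper.
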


\begin{proof}
Let $e_1 = \cdots = e_a := \infty$.  
Since 
$$
\HFs(K,T^*) = \varinjlim \HFs(K,T^*[\m^k]),
$$ 
Proposition \ref{stprops}(ii) applied to all the $T/\m^kT$ shows that 
for every $k \in \Z^+$ we have 
\begin{equation}
\label{eq:n}
\HFs(K,T^*[\m^k]) = \HFs(K,T^*)[\m^k] \cong 
    \dirsum{i \ge 1} R/\m^{\min\{k,e_i\}}.
\end{equation}
For every $k \ge 0$ let $\ssyi{k}$ denote the image of $\ssys$ in $\SS_r(T/\m^k T,\PPk)$.
Fix $s \ge 0$ such that $\ssys$ generates $\m^s\SS_r(T)$.  Then by Theorem \ref{rankdvr}, 
$\ssyi{k}$ generates $\m^s\SS_r(T/\m^k T)$ for every $k$.

Fix $t$, and choose $\n \in \NN$ with $\nu(\n) = t$.  Let $k$ be such that $\In = \m^k$.  
By \eqref{eq:n} and Proposition \ref{struclem} we have that 
$\epsilon^{(k)}_\n = 0$ if $t < a$, and $\epsilon^{(k)}_\n \in \m^{s+\sum_{i>t}e_i}\Y{\n}$ if $t > a$.  
But $\epsilon^{(k)}_\n = \epsilon_\n \in \Y{\n}$, so we conclude that 
$\partial\divity_{\ssys}(t) = \infty$ if $t < a$, and 
$\partial\divity_{\ssys}(t) \ge s+\sum_{i>t}e_i$ if $t \ge a$.

Now suppose $t \ge a$, and fix $k > s+\sum_{i>t}e_i$.  By Proposition \ref{struclem} we can 
find $\n\in \NN$ with $\In \subset \m^k$ such that 
$\epsilon^{(k)}_\n \notin \m^{s+1+\sum_{i>t}e_i}\Y{\n}$.
Since $\epsilon^{(k)}_\n$ is the image of $\epsilon_\n$, we have that 
$\epsilon_\n \notin \m^{s+1+\sum_{i>t}e_i}\Y{\n}$.
This shows that $\partial\divity_{\ssys}(t) \le s+\sum_{i>t}e_i$, and the proof is complete.
\end{proof}

\begin{thm}
\label{korank}
Suppose $R$ is a discrete valuation ring, $\ssys \in \SS_r(T)$ and $\ssys \ne 0$.  Then:
\begin{enumerate}
\item
the sequence $\partial\divity_{\ssys}(t)$ is nonincreasing, finite for $t \ge \ordv{\ssys}$, 
and nonnegative,
\item
the sequence $d_{\ssys}(i)$ is nonincreasing, finite for $i \ge \ordv{\ssys}$, 
and nonnegative,
\item
$\ordv{\ssys}$ and the $d_{\ssys}(i)$ are independent of the choice of 
nonzero $\ssys \in \SS_r(T)$, 
\item
$\corank_R(\HFs(K,T^*)) = \ordv{\ssys}$, 
\item
$\HFs(K,T^*)/(\HFs(K,T^*))_\dv \cong 
    \oplus_{i\ge\ordv{\ssys}} R/\m^{d_{\ssys}(i)}$,
\item
$\length_R(\HFs(K,T^*)/(\HFs(K,T^*))_\dv) 
    = \partial\divity_{\ssys}(\ordv{\ssys})-\partial\divity_{\ssys}(\infty)$, 
where $\partial\divity_{\ssys}(\infty) := \lim_{t \to \infty}\partial\divity_{\ssys}(t)$
\item
$\ssys$ is primitive if and only if $\der{\infty}(\ssys) = 0$,
\item
$\length(\HFs(K,T^*))$ is finite if and only if $\epsilon_1 \ne 0$,
\item
$
\length(\HFs(K,T^*)) \le \partial\divity_{\ssys}(0) = \max\{s : \epsilon_1 \in \m^s\wedge^r\HF(K,T)\},
$
with equality if and only if $\ssys$ is primitive.
\end{enumerate}
\end{thm}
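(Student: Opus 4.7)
The plan is to obtain everything as a direct corollary of Proposition \ref{11.4} together with Theorem \ref{rankdvr}. First, Theorem \ref{rankdvr} says $\SS_r(T)$ is free of rank one over $R$, so any nonzero $\ssys$ has a uniquely determined finite $s \ge 0$ with $\ssys$ generating $\m^s\SS_r(T)$; by the same theorem, $\ssys$ is primitive precisely when $s = 0$. This legitimizes applying Proposition \ref{11.4} to $\ssys$, which yields the key formula
$$
\partial\divity_{\ssys}(t) = \begin{cases} \infty & \text{if $t < a$,} \\ s + \sum_{i>t} e_i & \text{if $t \ge a$,} \end{cases}
$$
where $a := \corank_R \HFs(K,T^*)$ and $e_{a+1} \ge e_{a+2} \ge \cdots$ are the elementary divisors of $\HFs(K,T^*)/(\HFs(K,T^*))_\dv$. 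Observe that only finitely many $e_i$ are nonzero (the torsion quotient is finitely generated), so $\lim_{t\to\infty}\partial\divity_{\ssys}(t) = s$.

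Everything in (i)--(vi) then reduces to bookkeeping on this formula. The sequence $\partial\divity_{\ssys}(t)$ is nonincreasing and nonnegative for $t \ge a$ because the $e_i$ are nonnegative, giving (i); and a direct computation yields $d_{\ssys}(i) = \partial\divity_{\ssys}(i) - \partial\divity_{\ssys}(i+1) = e_{i+1}$ for $i \ge a$, which is nonincreasing, giving (ii). Since only $s$ depends on $\ssys$ and $s$ cancels in the differences $d_{\ssys}(i)$, and since $\ordv{\ssys} = a$ depends only on $(T,\FF,\PP)$, statement (iii) is immediate; the equality $\ordv{\ssys} = a$ is (iv). For (v), shifting indices gives $\bigoplus_{i \ge \ordv{\ssys}} R/\m^{d_{\ssys}(i)} = \bigoplus_{j > a} R/\m^{e_j} \cong \HFs(K,T^*)/(\HFs(K,T^*))_\dv$. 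For (vi), summing elementary divisors,
$$
\length_R(\HFs(K,T^*)/(\HFs(K,T^*))_\dv) = \sum_{i > a} e_i = \partial\divity_{\ssys}(a) - s = \partial\divity_{\ssys}(\ordv{\ssys}) - \partial\divity_{\ssys}(\infty).
$$

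The remaining statements are extracted from the same formula. Primitivity is $s = 0$, which equals $\partial\divity_{\ssys}(\infty) = 0$ by the limit computation above, proving (vii). For (viii), $\HFs(K,T^*)$ has finite length iff $(\HFs(K,T^*))_\dv = 0$ iff $a = 0$, which by the formula is iff $\partial\divity_{\ssys}(0) < \infty$, i.e., iff $\epsilon_1 \ne 0$. Finally, for (ix), when $\epsilon_1 \ne 0$ we have $a = 0$, so by (vi),
$$
\length(\HFs(K,T^*)) = \partial\divity_{\ssys}(0) - s \le \partial\divity_{\ssys}(0),
$$
with equality iff $s = 0$, i.e., iff $\ssys$ is primitive; the identification $\partial\divity_{\ssys}(0) = \max\{s : \epsilon_1 \in \m^s\wedge^r\HF(K,T)\}$ is the definition of $\divity_{\ssys}(1)$.

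The only real obstacle is keeping the indexing between $\partial\divity_{\ssys}(t)$, $d_{\ssys}(i)$ and the $e_i$ straight (note the shift $d_{\ssys}(i) = e_{i+1}$); the substantive work has already been done in Proposition \ref{11.4}, whose proof reduces to the artinian calculation in Proposition \ref{struclem} via Theorem \ref{rankdvr}.
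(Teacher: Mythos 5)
Your proof is correct and takes the same route as the paper, whose entire argument for Theorem \ref{korank} is that it ``follows directly from Proposition \ref{11.4}''; you have simply made the bookkeeping explicit (the identification $\ordv{\ssys}=a$, the index shift $d_{\ssys}(i)=e_{i+1}$, and the computation $\partial\divity_{\ssys}(\infty)=s$). Nothing further is needed.
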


\begin{proof}
The theorem follows directly from Proposition \ref{11.4}.
\end{proof}

\part{Kolyvagin systems}
\label{part3}

\section{Sheaves and monodromy}
\label{sog}

In this section we recall some concepts and definitions from \cite{kolysys}.

\begin{defn}
\label{sheafdef}
If $X$ is a graph, a sheaf $\cS$ (of $R$-modules) on $X$ is a rule
assigning:
\begin{itemize}
\item 
to each vertex $v$ of $X$, an $R$-module $\cS(v)$ (the stalk of $X$ at $v$),
\item 
to each edge $e$ of $X$, an $R$-module $\cS(e)$,
\item  
to each pair $(e, v)$ where $v$ is an endpoint of the edge $e$, 
an $R$-module map $\pve : \cS(v) \to \cS(e)$.
\end{itemize}
A global section of $\cS$ is a collection $\{\kappa_v \in \cS(v) : v \in V\}$ 
such that for every edge $e \in E$, if $e$ has endpoints $v, v'$ then 
$\pve(\kappa_v) = \pvpe(\kappa_{v'})$ in $\cS(e)$.
We write $\Gamma(\cS)$ for the $R$-module of global sections of $\cS$.
\end{defn}

\begin{defn}
\label{lcdef}
We say that a sheaf $\cS$ on a graph $X$ is
{\em locally cyclic} if all the $R$-modules $\cS(v)$, $\cS(e)$
are cyclic and all the maps $\pve$ are surjective. 

If $\cS$ is locally cyclic then a {\em surjective path} (relative to $\cS$)
from $v$ to $w$ is a path $(v=v_1,v_2,\ldots,v_k=w)$
in $X$ such that for each $i$, if $e_i$ is the edge joining $v_i$ and 
$v_{i+1}$, then $\shmap{v_{i+1}}{e_i}$ is an isomorphism.
We say that the vertex $v$ is a {\em hub} of $\cS$
if for every vertex $w$ there is an $\cS$-surjective path from $v$ to $w$.

Suppose now that the sheaf $\cS$ is locally cyclic.  If
$P = (v_1,v_2,\ldots,v_k)$ is a surjective path in $X$, we can define a
surjective map $\shmap{P}{} : \cS(v_1) \to \cS(v_k)$ by
$$
\shmap{P}{} :=
(\shmap{v_k}{e_{k-1}})^{-1} \circ \shmap{v_{k-1}}{e_{k-1}}
     \circ (\shmap{v_{k-1}}{e_{k-2}})^{-1}
     \circ \cdots \circ (\shmap{v_2}{e_1})^{-1} \circ \shmap{v_1}{e_1}
$$
since all the inverted maps are isomorphisms.
We will say that $\cS$ has {\em trivial monodromy} if 
whenever $v, w, w'$ are vertices, $P, P'$ are surjective paths  
$(v,\ldots,w)$ and $(v,\ldots,w')$, and $w, w'$ are joined by an edge $e$, 
then 
$\shmap{w}{e}\circ\shmap{P}{} = \shmap{w'}{e}\circ\shmap{P'}{} 
    \in \Hom(\cS(v),\cS(e))$.
In particular 
for every pair $v, w$ of vertices and
and every pair $P, P'$ of surjective paths from $v$ to $w$, we require that
$\shmap{P}{} = \shmap{P'}{} \in \Hom(\cS(v),\cS(w))$.
\end{defn}

\begin{prop}
\label{lcgraphprop}
Suppose $\cS$ is locally cyclic and $v$ is a hub of $\cS$.
\begin{enumerate}
\item
The map $f_v : \Gamma(\cS) \to \cS(v)$ defined by $\ksys \mapsto \kappa_v$
is injective, and is surjective if and only if $\cS$ has trivial monodromy.
\item
If $\ksys \in \Gamma(\cS)$, and if $u$ is a vertex such that
$\kappa_u \ne 0$ and $\kappa_u$ generates $\m^i \cS(u)$ for some
$i \in \Z^+$, then $\kappa_w$ generates $\m^i \cS(w)$ for every vertex $w$.
\end{enumerate}
\end{prop}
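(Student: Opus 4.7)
The plan is to exploit the hub property together with the surjectivity of the path maps $\shmap{P}{}$ on cyclic $R$-modules.

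For injectivity in (i), I would take any vertex $w$, pick a surjective path $P = (v = v_0, v_1, \ldots, v_k = w)$ from the hub, and iterate the edge relation $\shmap{v_i}{e_i}(\kappa_{v_i}) = \shmap{v_{i+1}}{e_i}(\kappa_{v_{i+1}})$: each $\shmap{v_{i+1}}{e_i}$ is an isomorphism, so $\kappa_{v_{i+1}}$ is determined by $\kappa_{v_i}$, and inductively $\kappa_w = \shmap{P}{}(\kappa_v)$. Thus $\kappa_v = 0$ forces $\ksys = 0$. For the surjectivity criterion, I would construct an inverse to $f_v$ by the formula $x \mapsto (\shmap{P}{}(x))_w$, choosing a surjective path $P$ from $v$ to each $w$. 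The two requirements for this to define a global section are exactly (a) independence of the chosen path $P$, and (b) edge compatibility $\shmap{w}{e}\shmap{P}{}(x) = \shmap{w'}{e}\shmap{P'}{}(x)$ for $w, w'$ joined by an edge $e$; these are precisely the two clauses of trivial monodromy. Conversely, given surjectivity, every $x \in \cS(v)$ extends to a global section whose values at other vertices must equal $\shmap{P}{}(x)$ for every surjective path $P$ by the injectivity half, so both clauses of trivial monodromy fall out by varying $x$.

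For (ii), I would again choose surjective paths $P$ from $v$ to $u$ and $P'$ from $v$ to $w$, obtaining $\kappa_u = \shmap{P}{}(\kappa_v)$ and $\kappa_w = \shmap{P'}{}(\kappa_v)$ where both composed maps are $R$-linear surjections between cyclic $R$-modules. Since $\shmap{P'}{}$ is surjective, proving that $\kappa_w$ generates $\m^i\cS(w)$ reduces to proving that $\kappa_v$ generates $\m^i\cS(v)$. For this step I would exploit that $R$ is a principal local ring, write $\cS(v) \cong R/\m^a$ and $\cS(u) \cong R/\m^b$ with $a \ge b$, and observe that the surjection $\shmap{P}{}$ is then the canonical quotient up to a unit. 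A short check on the valuation $j$ of a coefficient representing $\kappa_v$ against a generator shows that $j < i$ would make $\kappa_u$ generate $\m^j\cS(u) \supsetneq \m^i\cS(u)$ (possible only if $\m^i\cS(u) = 0$, contradicting $\kappa_u \ne 0$), while $j > i$ would place $\kappa_u$ in $\m^{i+1}\cS(u)$ (contradicting the hypothesis that $\kappa_u$ generates $\m^i\cS(u)$). Hence $j = i$, and surjectivity of $\shmap{P'}{}$ propagates the generating property to $\kappa_w$.

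The main obstacle is this last ``pull-back to the hub'' in (ii): along a surjective path the structural surjectivity goes outward from $v$, not inward, so one cannot simply invert $\shmap{P}{}$. What rescues the argument is the joint use of the hypothesis $\kappa_u \ne 0$ (which forbids the coefficient from sitting too deep in $\m$) and the cyclic-over-principal-local structure (which rigidifies surjections of cyclic $R$-modules up to units). Once that foothold is established, part (i) supplies the dictionary that transports information from $v$ to any target vertex $w$.
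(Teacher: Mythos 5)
Your argument is correct and is essentially the proof the paper has in mind: the paper itself simply cites \cite[Proposition 3.4.4]{kolysys}, whose proof runs exactly along these lines (transport of $\kappa_v$ along surjective paths for injectivity and for (ii), the path maps $\shmap{P}{}$ as the inverse of $f_v$ under trivial monodromy, and the rigidity of surjections of cyclic modules over a principal local ring for the ``pull back to the hub'' step). One small point to tidy: the definition of trivial monodromy quantifies over surjective paths issuing from an \emph{arbitrary} starting vertex, not just from the hub $v$, so in the converse direction of (i) you should reduce a given surjective path $P=(v_0,\dots,w)$ to the hub by precomposing with a surjective path $Q$ from $v$ to $v_0$ and using that $\shmap{Q}{}$ is surjective and that concatenations of surjective paths are surjective; this is immediate but needs saying.
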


\begin{proof}
This is \cite[Proposition 3.4.4]{kolysys}.
\end{proof}

\begin{defn}
\label{perf}
A  global section $\kappa \in \Gamma(\cS)$ will be called {\em primitive} 
if for every vertex $v$, $\kappa(v) \in \cS(v)$ is a generator of the 
$R$-module $\cS(v)$.  

It follows from Proposition \ref{lcgraphprop} that a locally cyclic 
sheaf $\cS$ with a hub has a primitive global section if and only if $\cS$ has 
trivial monodromy.
\end{defn}

\section{Kolyvagin systems and the Selmer sheaf}
\label{ksp}

Fix Selmer data $(T,\FF,\PP,r)$ as in Definition \ref{sddef}.  
Recall that we have defined a Selmer structure $\FF(\n)$ for every $\n\in\NN$ 
(Definition \ref{FFabc}) by modifying the local condition at primes dividing $\n$, 
and that $K(\l)$ is the $p$-part of the ray class field of $K$ modulo $\l$.

\begin{defn}
For every $\n \in\NN$, define 
$$
\Gn := \tensor{\l \mid \n} \Gal(K(\l)_\l/\Kl).
$$
Each $\Gal(K(\l)_\l/\Kl)$ is cyclic with order contained in $\In$, so $\Gn \otimes (R/\In)$ is free of rank one over $R/\In$.

If $\l$ is a prime dividing $\n$, then $(T/\In T)/(\Frl-1)(T/\In T)$ is free of rank one over 
$R/\In$, so we can apply the results of \S\ref{lcg} to $H^1(\Kl,T/\In T)$.  In particular we will write 
$$
\fsl : \Hf(\Kl,T/\In T) \too \Ht(\Kl,T/\In T) \otimes G_\l
$$ 
for the finite-singular isomorphism of Definition \ref{fsmap} applied to $\Kl$.

If $\l$ is a prime, $\n\l \in \NN$, and $r \ge 1$, then we can 
compare $\wedge^r \HFfn(K,T/\In T) \otimes \Gn$ and 
$\wedge^r \HS{\FFf{\n\l}}(K,T/\Inl T) \otimes \Gnl$ using the exterior algebra of Appendix \ref{exalg}.
Namely, applying Proposition \ref{wedgemap} with the localization maps of Definition \ref{piqdef}
\begin{align*}
&\loc_\l^\f : \HFfn(K,T/\Inl T) \too \Hf(K_\l,T/\Inl T) \map{\fsl} \Ht(K_\l,T/\Inl T) \otimes G_\l, \\ 
&\loc_\l^\tr : \HS{\FFf{\n\l}}(K,T/\Inl T) \too \Ht(K_\l,T/\Inl T)
\end{align*}
gives the top and bottom maps, respectively, in the following diagram:
\begin{equation}
\label{compdiag}
\hskip -50pt
\raisebox{48pt}
{\xymatrix@R=25pt@C=15pt{
(\wedge^r\HFfn(K,T/\In T)) \otimes \Gn \ar^(.6){\widehat{\loc_\l^f}\otimes 1}[dr] \\
    & \hskip -80pt \Ht(\Kl,T/\Inl T) \otimes (\wedge^{r-1}\HS{\FF_\l(\n)}(K,T/\Inl T)) 
       \otimes \Gnl \hskip -80pt \\
    (\wedge^r\HS{\FFf{\n\l}}(K,T/\Inl T)) \otimes \Gnl  \ar_(.6){\widehat{\loc_\l^\tr}\otimes 1}[ur]
}}
%\hskip -30pt
%\raisebox{75pt}
%{\xymatrix@R=25pt@C=15pt{
%\hskip -40pt(\wedge^r\HFfn(K,T/\In T)) \otimes \Gn \hskip -40pt \ar^{\widehat{\loc_\l^f}\otimes 1}[dr] \\
%& \hskip -80pt \Hf(\Kl,T/\Inl T) \otimes (\wedge^{r-1}\HS{\FF_\l(\n)}(K,T/\Inl T)) 
%      \otimes \Gn \hskip -80pt \ar^{\fsl \otimes 1}[dr] \\
%    && \hskip -80pt \Ht(\Kl,T/\Inl T) \otimes (\wedge^{r-1}\HS{\FF_\l(\n)}(K,T/\Inl T)) 
%       \otimes \Gnl \hskip -80pt \\
%    \hskip -40pt(\wedge^r\HS{\FFf{\n\l}}(K,T/\Inl T)) \otimes \Gnl \hskip -30pt \ar_-{\widehat{\loc_\l^\tr}\otimes 1}[urr]
%}}
\end{equation}
\end{defn}

\begin{defn}
\label{canonicalsheaf}
Define a graph $\X := \X(\PP)$ by taking the set of vertices of $\X$ to be $\NN := \NN(\PP)$ 
(Definition \ref{pndef}), and whenever 
$\n, \n\l \in \NN$ (with $\l$ prime) we join $\n$ and $\n\l$ by an edge.

The {\em Selmer sheaf} associated to $(T,\FF,\PP,r)$ is the
sheaf $\HH = \HH_{(T,\FF,\PP,r)}$ of $R$-modules on $\X$ defined as follows.  
Let 
\begin{itemize}
\item
$\HH(\n) := (\wedge^r\HFfn(K,T/\In T)) \otimes \Gn$ for $\n \in \NN$, 
\end{itemize}
and if $e$ is the edge joining $\n$ and $\n\l$ define 
\begin{itemize}
\item
$\HH(e) := \Ht(\Kl,T/\Inl T) \otimes (\wedge^{r-1}\HS{\FF_\l(\n)}(K,T/\Inl T)) \otimes \Gnl$,
\item
$\pne : \HH(\n) \to \HH(e)$ is the upper map of \eqref{compdiag},
\item
$\pnle : \HH(\n\l) \to \HH(e)$ is the lower map of \eqref{compdiag}.
\end{itemize}
We call $\HH(n) := \wedge^r\HFfn(K,T/\In T) \otimes \Gn$ the {\em Selmer stalk} at $n$.
\end{defn}

\begin{defn}
\label{ksdefn}
A {\em Kolyvagin system} for $(T,\FF,\PP,r)$ 
(or simply a Kolyvagin system of rank $r$ for $T$, if $\FF$ and $\PP$ are fixed) 
is a global section of the Selmer sheaf $\HH$.
We write $\KS_r(T,\FF,\PP)$, or simply $\KS_r(T)$ when there is no 
risk of confusion, for the $R$-module of Kolyvagin systems $\Gamma(\HH)$.  

Concretely, a Kolyvagin system for $(T,\FF,\PP,r)$ is a collection of classes 
$$
\{\kappa_\n \in (\wedge^r\HFfn(K,T/\In T)) \otimes \Gn : \n \in \NN\}
$$
such that if $\l$ is prime and $\n\l \in \NN$,
the images of $\kappa_{\n}$ and $\kappa_{\n\l}$ coincide in the diagram
\eqref{compdiag}.
\end{defn}

\begin{rem}
The definition of Kolyvagin system given in \cite{kolysys} corresponds to 
the definition above with $r = 1$.
\end{rem}

\section{Stub Kolyvagin systems}
\label{stubks}

Suppose until the final result of this section that 
$R$ is a principal artinian ring of length $k$.
Fix Selmer data $(T,\FF,\PP,r)$ as in Definition \ref{sddef}
such that hypotheses \ref{h.1} through \ref{h.7} of \S\ref{sscr} hold. 
In particular $r = \cm(T)$ is the core rank of $T$.

Recall that for $\n \in \NN$ we defined
$$
\mesh(\n) := \length_R(\HFfns(K,T^*)) \in \Z_{\ge 0} \cup \{\infty\}.
$$
We say that a vertex $\n \in \NN$ is a {\em core vertex} if $\mesh(\n) = 0$.

\begin{prop}
The following are equivalent:
\begin{enumerate}
\item
$\n$ is a core vertex for $T$,
\item
$\HFfn(K,T)$ is free of rank $r$ over $R$, 
\item
$\HH(\n)$ is free of rank one over $R$,
\item 
$\n$ is a core vertex for $T/\m T$.
\end{enumerate}
\end{prop}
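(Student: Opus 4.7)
The plan is to verify the four-way equivalence by splitting it into (i) $\iff$ (ii) $\iff$ (iii) and (i) $\iff$ (iv), each link following from a result already in the paper. A preliminary reduction: by hypothesis \ref{h.7}, $\Il = 0$ for every $\l \in \PP$, hence $\In = 0$, so $T/\In T = T$ and $\Gn \otimes R$ is free of rank one over $R$. Therefore
$$
\HH(\n) \;=\; \bigl(\wedge^r \HFfn(K,T)\bigr) \otimes \Gn \;\cong\; \wedge^r \HFfn(K,T)
$$
as abstract $R$-modules, and (iii) becomes a statement purely about the top exterior power of $\HFfn(K,T)$.

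For (i) $\iff$ (ii) I would invoke Corollary \ref{threefive}(i), which provides the noncanonical decomposition $\HFfn(K,T) \cong \HFfns(K,T^*) \oplus R^r$. In the artinian setting the left-hand side is free of rank $r$ iff the torsion summand $\HFfns(K,T^*)$ vanishes, iff $\mesh(\n) = \length_R(\HFfns(K,T^*)) = 0$. The implication (ii) $\Rightarrow$ (iii) is then immediate. For the converse (iii) $\Rightarrow$ (ii), I would apply the standard exterior algebra identity
$$
\wedge^r(M \oplus R^r) \;\cong\; \bigoplus_{i=0}^{r} \wedge^i M \otimes_R \wedge^{r-i} R^r
$$
with $M := \HFfns(K,T^*)$. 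The $i=0$ piece contributes a free summand $R$, while the $i=1$ piece contributes $M \otimes_R R^r \cong M^r$; so if the entire exterior power has length $\length(R)$, a length comparison forces $M^r = 0$, whence $M = 0$ using $r \ge 1$ from \ref{h.6}.

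For (i) $\iff$ (iv) I would note that $(T/\m T)^* = T^*[\m]$, so being a core vertex for $T/\m T$ is the condition $\HFfns(K,T^*[\m]) = 0$. Since $\FFf{\n}$ is cartesian (hypothesis \ref{h.5} together with \cite[Lemma 3.7.1]{kolysys}), Proposition \ref{stprops}(ii) applies and gives $\HFfns(K,T^*[\m]) \cong \HFfns(K,T^*)[\m]$. Over an artinian local ring, a finitely generated torsion module is zero iff its $\m$-torsion is zero (pick a nonzero element and let $j$ be minimal with $\m^j$ in its annihilator). Hence $\HFfns(K,T^*)[\m] = 0$ iff $\HFfns(K,T^*) = 0$, completing the equivalence with (i).

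The main obstacle is the (iii) $\Rightarrow$ (ii) step: one cannot in general recover freeness of a module from freeness of its top exterior power. The decomposition of Corollary \ref{threefive}(i) is precisely what rescues the argument here, since it already extracts a free rank-$r$ complement and reduces the problem to showing that the residual torsion summand $M$ vanishes, a condition which the $M^r$ piece inside $\wedge^r(M \oplus R^r)$ detects cleanly.
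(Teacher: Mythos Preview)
Your proof is correct and follows the same route as the paper's: (i) $\iff$ (ii) via Corollary \ref{threefive}, (i) $\iff$ (iv) via Proposition \ref{stprops}(ii), and (ii) $\iff$ (iii) by direct inspection of the exterior power (the paper simply calls this ``easy to see''). One small remark: you do not need to argue that $\FFf{\n}$ is cartesian, since Proposition \ref{stprops}(ii) is already stated for the modified Selmer structure $\FFf{\n}$ under the sole assumption that $\FF$ itself is cartesian (hypothesis \ref{h.5}).
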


\begin{proof}
We have (i) $\iff$ (ii) by Corollary \ref{threefive}, and 
(i) $\iff$ (iv) by Proposition \ref{stprops}(ii).  It is easy to see that 
(ii) $\iff$ (iii).
\end{proof}

\begin{prop}
\label{eqim}
If $\n,\n\l \in \NN$ and $e$ is the edge joining them, then
$$
\pne(\m^{\mesh(\n)} \HH(\n)) = \pnle(\m^{\mesh(\n\l)} \HH(\n\l)) \subset \HH(e).
$$
\end{prop}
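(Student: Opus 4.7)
The plan is to use Proposition \ref{wedgemap} to identify both images as submodules of the common stalk $\HH(e)$, and then reduce to a length comparison inside a cyclic $R$-module which is in turn controlled by Poitou--Tate global duality.

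Hypothesis \ref{h.7} gives $\Il = 0$ for every $\l \in \PP$, hence $\In = \Inl = 0$, and every module below is an $R$-module. By \eqref{gi} and \eqref{gi2} the localization maps of Definition \ref{piqdef},
\[
\loc_\l^\f : \HFfn(K,T) \too \Ht(K_\l,T) \otimes G_\l, \quad \loc_\l^\tr : \HS{\FFf{\n\l}}(K,T) \too \Ht(K_\l,T),
\]
have the common kernel $\HS{\FF_\l(\n)}(K,T)$. Applying Proposition \ref{wedgemap} to the short exact sequences attached to these maps identifies
\[
\pne(\HH(\n)) = \loc_\l^\f(\HFfn(K,T)) \otimes \wedge^{r-1}\HS{\FF_\l(\n)}(K,T) \otimes \Gn
\]
and similarly
\[
\pnle(\HH(\n\l)) = \loc_\l^\tr(\HS{\FFf{\n\l}}(K,T)) \otimes \wedge^{r-1}\HS{\FF_\l(\n)}(K,T) \otimes \Gnl
\]
as submodules of $\HH(e)$. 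Using $\Gnl = \Gn \otimes G_\l$ and the fact that $\Ht(K_\l,T) \otimes G_\l$ is free of rank one over $R$ (Lemma \ref{applemma}), the proposition reduces to the identity
\[
\m^{\mesh(\n)}\loc_\l^\f(\HFfn(K,T)) = \m^{\mesh(\n\l)}\loc_\l^\tr(\HS{\FFf{\n\l}}(K,T)) \otimes G_\l
\]
of submodules of the cyclic $R$-module $\Ht(K_\l,T) \otimes G_\l$.

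Since any submodule of a cyclic $R$-module is of the form $\m^j$ times the whole, the displayed identity is equivalent to a single pair of cokernel-length formulas. I would deduce these from the five-term Poitou--Tate exact sequence of \cite[Theorem 2.3.4]{kolysys}, applied to the pairs of Selmer structures $\FF_\l(\n) \subset \FFf{\n}$ and $\FF_\l(\n) \subset \FFf{\n\l}$, each of which differs only at $\l$, with local-quotient term $\Hf(K_\l,T)$ and $\Ht(K_\l,T)$ respectively. The resulting formulas read
\[
\length(\coker \loc_\l^\f) = \length(\HS{\FF_\l(\n)^*}(K,T^*)) - \mesh(\n),
\]
\[
\length(\coker \loc_\l^\tr) = \length(\HS{\FF_\l(\n)^*}(K,T^*)) - \mesh(\n\l),
\]
so that after scaling by $\m^{\mesh(\n)}$ (resp.\ $\m^{\mesh(\n\l)}$) both sides of the displayed identity coincide with $\m^{\length(\HS{\FF_\l(\n)^*}(K,T^*))}\bigl(\Ht(K_\l,T) \otimes G_\l\bigr)$.

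The main technical point is the Poitou--Tate bookkeeping: one must correctly identify the dual Selmer structure $\FF_\l(\n)^*$ (relaxed at $\l$, transverse at the primes dividing $\n$, and dual to $\FF$ at $\Sigma(\FF)$), and verify that the connecting map in the five-term exact sequence is precisely $\loc_\l^\f$ (resp.\ $\loc_\l^\tr$) followed by the cokernel projection, so that the cokernel lengths emerge in exactly the form above. Once this is in place, the reduction via Proposition \ref{wedgemap} and the cyclicity of $\Ht(K_\l,T)\otimes G_\l$ make the conclusion immediate.
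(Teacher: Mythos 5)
Your proof is correct and follows essentially the same route as the paper's: both identify $\pne(\HH(\n))$ and $\pnle(\HH(\n\l))$ via Proposition \ref{wedgemap}(ii) and then reduce to comparing $\m^{\mesh(\n)}\loc_\l^\f(\HFfn(K,T))$ with $\m^{\mesh(\n\l)}\loc_\l^\tr(\HFfnl(K,T))\otimes\Gl$ inside the cyclic module $\Ht(K_\l,T)\otimes\Gl$. The only difference is that the paper cites \cite[Lemma 4.1.7]{kolysys} for this last comparison, whereas you unpack it directly from the Poitou--Tate five-term sequence (which is how that lemma is proved anyway), and your cokernel-length bookkeeping is accurate.
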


\begin{proof}
By Proposition \ref{wedgemap}(ii) and Definition \ref{canonicalsheaf} of $\pne$ and $\pnle$, we have 
\begin{align*}
\pne(\HH(\n)) &= \fsl(\loc_\l(\HFfn(K,T))) \otimes \wedge^{r-1}\HS{\FF_\l(\n)}(K,T) \otimes \Gn, \\
\pnle(\HH(\n\l)) &= \loc_\l(\HFfnl(K,T)) \otimes \wedge^{r-1}\HS{\FF_\l(\n)}(K,T) \otimes \Gnl.
\end{align*}
By \cite[Lemma 4.1.7]{kolysys}, global duality shows that
$$
\m^{\mesh(\n)}\fsl(\loc_\l(\HFfn(K,T))) = \m^{\mesh(\n\l)}\loc_\l(\HFfnl(K,T)) \otimes \Gl
$$
and the proposition follows.
\end{proof}

We define a subsheaf $\HH'$ of the Selmer sheaf $\HH$ as follows.

\begin{defn}
\label{stubsheaf}
The {\em sheaf of stub Selmer modules} $\HH' = \HH'_{(T,\FF,\PP,r)} \subset \HH$ 
is the subsheaf of $\HH$ defined by
\begin{itemize}
\item
$
\HH'(\n) := \m^{\mesh(\n)} \HH(\n) 
    = \m^{\mesh(\n)} (\wedge^r\HFfn(K,T)) \otimes \Gn \subset \HH(\n)
$
if $\n \in \NN$,
\item
$\HH'(e)$ is the image of $\HH'(\n)$ in $\HH(e)$ 
under the vertex-to-edge map of $\HH$, 
if $\n$ is a vertex of the edge $e$ (this is well-defined by Proposition \ref{eqim}),
\end{itemize}
and the vertex-to-edge maps are the restrictions of those of the sheaf $\HH$.
\end{defn}

%\begin{lem}
%\label{cor35}
%For every $\n$ there is an $R$-module isomorphism 
%$\HH'(\n) \cong \m^{\mesh(\n)}$.
%\end{lem}
%
%\begin{proof}
%This follows directly from Corollary \ref{threefive}.
%\end{proof}

\begin{defn}
\label{ks'def}
A {\em stub Kolyvagin system} is a global section of the sheaf $\HH'$.  
We let $\KS'_r(T) = \KS_r'(T,\FF,\PP) := \Gamma(\HH') \subset \KS_r(T)$ 
denote the $R$-module of stub Kolyvagin systems.
\end{defn}

\begin{rem}
It is shown in \cite[Theorem 4.4.1]{kolysys} that when the core rank $\cm(T) = 1$, 
we have $\KS'_1(T) = \KS_1(T)$.  In other words, in that case 
for every Kolyvagin system $\ksys \in \KS_1(T)$ and $\n \in \NN$, we have 
$
\kappa_\n \in \m^{\mesh(\n)}\HFfn(K,T) \otimes \Gn.
$

\end{rem}

\begin{thm}
\label{mthm0}
\begin{enumerate}
\item
There are core vertices.
\item
Suppose $\n,\n'$ are core vertices.  Then there is a path
$$
\xymatrix{
\n=\n_0 \ar@{-}^-{e_1}[r] & \n_1 \ar@{-}^-{e_2}[r] & \cdots \ar@{-}^-{e_t}[r] & \n_t=\n'
}
$$
in $\X$ such that every $\n_i$ is a core vertex and 
all of the maps $\psi_{\n_i}^{e_{i+1}}$ and $\psi_{\n_i}^{e_{i}}$ 
are isomorphisms.
\item
The stub subsheaf $\HH'$ is locally cyclic, and every core vertex is a hub.  
For every vertex $\n \in \NN$, there is a 
core vertex $\n' \in \NN$ divisible by $\n$.
\end{enumerate}
\end{thm}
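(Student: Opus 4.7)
My strategy is to prove (i), then the structural parts of (iii), then (ii) together with the hub property, all built on the same Chebotarev-style engine provided by \cite[Proposition 3.6.1]{kolysys} (whose hypotheses are secured by \ref{h.1}--\ref{h.4}).

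For (i), I would mimic the proof of Lemma \ref{ylem}(ii). Start with $\n=1$, so $\HFs(K,T^*)$ is finite, and argue by descending induction on $\mesh(\n)=\length(\HFfns(K,T^*))$: if $\mesh(\n)>0$, choose a nonzero class $c\in \HFfns(K,T^*)[\m]$; Proposition 3.6.1 of \cite{kolysys} produces a prime $\l\in\PP$ not dividing $\n$ such that $\loc_\l(c)$ generates $\Hf(K_\l,T^*)\cong R/\m$. Since $\Hf(K_\l,T^*)$ and $\Ht(K_\l,T^*)$ are orthogonal complements under the Tate pairing (Proposition \ref{h1fdual}), the class $c$ fails the transverse condition at $\l$ and so is killed in $\HS{(\FFf{\n\l})^*}(K,T^*)$; thus $\mesh(\n\l)<\mesh(\n)$. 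Finitely many iterations produce a core vertex. For the cyclicity portion of (iii), Corollary \ref{threefive}(iii) gives that $\HH'(\n)=\m^{\mesh(\n)}\bigl(\wedge^r\HFfn(K,T/\In T)\bigr)\otimes \Gn$ is cyclic, vertex-to-edge maps are surjective by the definition of $\HH'(e)$, and so $\HH'$ is locally cyclic. The last clause of (iii) (``every vertex has a core multiple'') is the argument of (i) rerun starting from the given $\n$, with the Selmer structure $\FFf{\n}$ on $T/\In T$ and with primes drawn from $\PP$ disjoint from the support of $\n$; hypothesis \ref{h.7} keeps such primes usable.

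For (ii) together with the hub property, my plan is to establish the following interchange lemma: given a core vertex $\n$ and a prime $\l\in\PP$ not dividing $\n$, there exists $\l'\in\PP$ such that both $\n\l'$ and $\n\l\l'$ are core vertices. This gives a two-step path $\n\to\n\l'\to\n\l\l'$ through core vertices, and a symmetric argument produces a path $\n\l\to\n\l\l'$. Iterating the interchange lemma (adding or removing primes via intermediate cores), any two core vertices $\n,\n'$ can be joined by a path of core vertices differing by one prime at each step. Along such a path both vertex-to-edge maps are isomorphisms: at a core vertex $\HH'(\n)=\HH(\n)$ is free of rank one over $R$, and by Proposition \ref{eqim} the common edge stalk $\HH'(e)$ is also of length $k$, so the two cyclic surjections must both be isomorphisms. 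For the hub property, given a core $\n$ and any target $w\in\NN$, use the last clause of (iii) to find a core $\n'$ divisible by $w$, build a core-to-core surjective path from $\n$ to $\n'$ by (ii), then descend $\n'\to w$ one prime at a time; at each descent step the arrival vertex may fail to be core, but the arrival map $\HH'(\cdot)\to\HH'(e)$ is still the required isomorphism because Proposition \ref{eqim} identifies the lengths on both sides.

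The main obstacle I expect is the interchange lemma in (ii): producing a single auxiliary prime $\l'$ that simultaneously forces $\n\l'$ and $\n\l\l'$ to be core vertices. The two core-ness conditions amount to killing two related dual Selmer groups which differ by whether the condition at $\l$ is transverse or strict/unrestricted; by the global duality sequence \cite[Theorem 4.1.7]{kolysys} these two groups are controlled by a common finite set of classes in $\HFfns(K,T^*)[\m]$. The cleanest route is to embed the relevant classes into $\Hom(G_{\cH_M(T^*)},T^*[\m])$ using \ref{h.3}, select $\l'$ whose Frobenius hits the required combination of localizations, and use \ref{h.1} and \ref{h.4} to verify that the various Cebotarev conditions can be imposed independently. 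Once this simultaneous-control step is in hand, the rest of (ii) and the hub property are bookkeeping with Corollary \ref{threefive} and Proposition \ref{eqim}.
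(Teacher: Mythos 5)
Your treatment of local cyclicity, of the existence of core vertices, and of the ``core multiple'' clause of (iii) follows the paper's general route, but the descent step already has a gap. You choose $\l$ so that a nonzero $c\in\HFfns(K,T^*)[\m]$ satisfies $\loc_\l(c)\ne0$ and conclude $\mesh(\n\l)<\mesh(\n)$. This does not follow: $\HFfnls(K,T^*)$ is cut out by the \emph{transverse} condition at $\l$ and is not contained in $\HFfns(K,T^*)$, so killing $c$ does not prevent new dual classes from appearing (the containment argument you are importing from Lemma \ref{ylem}(ii) is valid only for the strict condition $(\FF^*)_{\n}$). Indeed, global duality gives
$\dim_\k\HFfnls(K,\Tb^*)-\dim_\k\HFfns(K,\Tb^*)=\dim_\k\loc_\l^{\tr}(\HS{\FF(\n\l)}(K,\Tb))-\dim_\k\loc_\l(\HS{\FF(\n)}(K,\Tb))$,
so you must \emph{also} require $\loc_\l$ to be nonzero on $\m^{k-1}\HFfn(K,T)$ (which is nonzero by Corollary \ref{threefive}(iii) since $\cm(T)>0$). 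This two-sided Cebotarev choice is exactly Proposition \ref{9.4}, and it is also what makes the edge map $\pne:\HH'(\n)\to\HH'(e)$ an isomorphism. Your hub argument has the same defect in sharper form: descending from an arbitrary core multiple $\n'$ of $w$ one prime at a time, the arrival maps need not be isomorphisms --- Proposition \ref{eqim} says only that the two images in $\HH(e)$ coincide, not that $\length\HH'(e)=\length\HH'(w)$. The primes of $\n'/w$ must themselves be produced by the two-sided condition so that each arrival map is an isomorphism; this is the paper's induction on $\dim_\k\HFfns(K,\Tb^*)$.

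For (ii) the gap is more serious. First, adjacency of two core vertices does not by itself make the two edge maps isomorphisms: by Lemma \ref{9.2} this happens if and only if $\loc_\l$ is nonzero on $\HFfn(K,T)[\m]$, and nothing forces that localization to be nonzero merely because both vertices are core (if it vanishes, $\HH'(e)=0$ and neither map is injective). So ``a path through core vertices'' is not enough; the edges must be controlled as well. Second, your interchange lemma requires $\loc_\l\ne0$ on $\HS{\FF(\n\l')}(K,\Tb)$, which is not a Cebotarev condition on $\l'$ in any direct way, since that Selmer group depends on $\l'$ through the transverse condition at $\l'$; and even granting the lemma, reducing the connectivity of two arbitrary core vertices (possibly sharing no primes) to it is not bookkeeping, because the intermediate vertices $\n\el_1\cdots\el_j$ need not be core and each detour imposes further conditions depending on all earlier choices. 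The paper takes a different route: induction on the core rank. One chooses a single prime $\l$ whose localization is nonzero on both $\HS{\FF(\n)}(K,\Tb)$ and $\HS{\FF(\n')}(K,\Tb)$; then $(\Tb,\bar\FF_\l,\PP-\{\l\})$ has core rank $r-1$ and still has $\n,\n'$ as core vertices, and one lifts the path supplied by the inductive hypothesis, the base case being \cite[Theorem 4.3.12]{kolysys}. To complete your proposal you would need either to carry out that induction or to give a genuine proof of the simultaneous-control step you yourself flag as the main obstacle.
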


Theorem \ref{mthm0} will be proved in \S\ref{m0pf}.  In the remainder of this 
section we derive some consequences of it.

\begin{thm}
\label{mthm3}
\begin{enumerate}
\item
The module $\KS'_r(T)$ of stub Kolyvagin systems is free of rank one over $R$, and 
for every core vertex $\n$ the specialization map 
$$
\KS'_r(T) \too \HH'(\n) = (\wedge^r\HFfn(K,T)) \otimes \Gn
$$ 
given by $\ksys \mapsto \kappa_\n$ is an isomorphism.
\item
There is a Kolyvagin system $\ksys \in \KS'_r(T)$ such that $\kappa_\n$ generates $\HH'(\n)$ 
for every $\n \in \NN$.
\item
The locally cyclic sheaf $\HH'$ has trivial monodromy.  
\end{enumerate}
\end{thm}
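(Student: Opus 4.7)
The plan is to derive all three parts of Theorem \ref{mthm3} from Theorem \ref{mthm0} combined with the abstract sheaf-theoretic framework of \S\ref{sog}. Parts (i) and (ii) will follow formally once (iii) is established, so the proof reduces to showing that $\HH'$ has trivial monodromy.

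For the formal reduction, by Theorem \ref{mthm0}(iii) the sheaf $\HH'$ is locally cyclic and every core vertex is a hub. Granting trivial monodromy, Proposition \ref{lcgraphprop}(i) applied at any core vertex $\n_0$ yields an isomorphism $\KS'_r(T) = \Gamma(\HH') \isom \HH'(\n_0) = \HH(\n_0)$, which is free of rank one since $\mesh(\n_0) = 0$; this proves (i). For (ii), choose $\ksys \in \KS'_r(T)$ whose value at $\n_0$ generates $\HH'(\n_0)$ and apply Proposition \ref{lcgraphprop}(ii) with $i=0$: the class $\kappa_\n$ then generates $\HH'(\n)$ at every vertex $\n$.

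The heart of the proof is thus (iii). Since local cyclicity is already part of Theorem \ref{mthm0}(iii), only trivial monodromy needs to be shown. Using Theorem \ref{mthm0}(iii) to lift every vertex to a core vertex divisible by it, together with surjectivity of the vertex-to-edge maps of $\HH'$, the problem reduces to checking trivial monodromy on the subgraph $\X^{\mathrm{core}}$ of core vertices. On this subgraph, Theorem \ref{mthm0}(ii) provides surjective paths between any pair of core vertices along which every vertex-to-edge map is an isomorphism; hence each such path induces a canonical isomorphism of the (free rank-one) stalks, and trivial monodromy on $\X^{\mathrm{core}}$ is equivalent to commutativity of every elementary square of core vertices of the form $(\n, \n\l, \n\l', \n\l\l')$.

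This square commutativity is the main technical obstacle, and is essentially an exterior-algebra statement about simultaneous finite-singular comparisons at two distinct primes. Unwinding Definition \ref{canonicalsheaf}, both compositions $\HH(\n) \to \HH(\n\l) \to \HH(\n\l\l')$ and $\HH(\n) \to \HH(\n\l') \to \HH(\n\l\l')$ can be described by taking a class in $\wedge^r \HS{\FF^{\n\l\l'}}(K, T/I_{\n\l\l'}T)$, localizing at $\l$ and at $\l'$, applying $\fsl$ and $\fs_{\l'}$ to the respective finite components (each acting on independent local factors since $\l \ne \l'$), and reassembling via the wedge operations of Proposition \ref{wedgemap}. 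Since the canonical isomorphism $G_{\n\l\l'} \cong \Gn \otimes G_\l \otimes G_{\l'}$ is symmetric in $\l, \l'$ and the two local operations act on disjoint pieces, the two compositions agree up to at most a Koszul sign arising from the ordering of wedge factors; a direct sign computation using the conventions of Proposition \ref{wedgemap} shows they agree exactly. The sign bookkeeping will be the main subtle point, but the underlying reason — that independent local operations at disjoint primes commute — is the same phenomenon underlying the analogous argument in the rank-one setting of \cite{kolysys}.
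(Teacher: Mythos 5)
Your reduction of (i) and (ii) to (iii) via Proposition \ref{lcgraphprop} and Theorem \ref{mthm0} is correct and matches the skeleton of the paper's (one-line) proof, and you are right that the surjectivity of $\ksys\mapsto\kappa_\n$ at a core vertex --- equivalently, trivial monodromy --- is the real content. The gap is in your proof of (iii). First, the combinatorial reduction is unjustified: trivial monodromy compares arbitrary surjective paths, and two such paths between core vertices need not be deformable into one another through elementary square moves consisting of surjective paths; moreover an edge joining two core vertices $\n$ and $\n\l$ need not have invertible vertex-to-edge maps (by Lemma \ref{9.2} this requires $\loc_\l$ to be nonzero on $\HFfn(K,T)[\m]$), so your ``elementary squares of core vertices'' are not automatically isomorphism squares. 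Second, and more seriously, the square commutativity is not a Koszul-sign computation. The two routes around $(\n,\n\l,\n\l',\n\l\l')$ pass through the different modules $\wedge^r\HS{\FFf{\n\l}}(K,T)$ and $\wedge^r\HS{\FFf{\n\l'}}(K,T)$, which carry different local conditions at $\l$ and $\l'$; there is no common ambient module in which the finite-singular operations at the two primes ``act on disjoint factors.'' Concretely, one route determines the class at $\n\l\l'$ through the finite localization at $\l'$ of an intermediate class that is itself constrained only via its transverse localization at $\l$, while the other route does the reverse; identifying the two outputs requires global duality/reciprocity input and is essentially equivalent to the existence of a global section, i.e.\ to the statement being proved.

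The missing surjectivity is supplied in the paper's framework not by a direct computation on the Selmer sheaf but by the Stark-system detour of \S\ref{ksnss}: Theorem \ref{magic} produces a generator $\ssys$ of $\SS_r(T)$, and Proposition \ref{ss2ks} together with Lemma \ref{imlem} shows that $\Pi(\ssys)$ is a stub Kolyvagin system whose component at every core vertex generates the free rank-one stalk $\HH'(\n)$. (The commutativity you want is formal there because all the maps $\Pi_\n$ and $\Psi_{\n,\mm}$ are computed inside the single relaxed module $\HS{\FF^\n}(K,T)$ via Proposition \ref{bhprop}; note that even in that favorable setting a sign $(-1)^{\nu(\n)}$ must be inserted to obtain a global section, which should make you wary of asserting that the signs ``agree exactly.'') Surjectivity of $\ksys\mapsto\kappa_{\n_0}$ at a core vertex then yields trivial monodromy by Proposition \ref{lcgraphprop}(i), and (i)--(iii) follow as in your first paragraph. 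You should route your argument through that construction rather than the direct square computation.
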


\begin{proof}
This follows from Proposition \ref{lcgraphprop}, using Theorem \ref{mthm0}(i,iii).
\end{proof}

For the next theorem we take $R$ to be a discrete valuation ring.

\begin{thm}
\label{betthm}
Suppose that $R$ is a discrete valuation ring, and hypotheses {\rm\ref{h.1}} 
through {\rm\ref{h.6}} are satisfied for the Selmer data $(T,\FF,\PP,r)$.  
For $k > 0$ let $\PPk \subset \PP$ be as in Definition \ref{pndef}.

The natural maps $T \onto T/\m^k$ and $\PPk \hookto \PP$ induce an isomorphism
$$
\KS'_r(T,\PP) \map{\;\sim\;} \varprojlim \KS'_r(T/\m^k T,\PPk).
$$
The $R$-module $\KS'_r(T,\PP)$, is free of rank one, generated by 
a Kolyvagin system $\ksys$ whose image in $\KS'_r(T/\m T)$ is nonzero.  The maps 
$\KS'_r(T,\PP) \to \KS'_r(T/\m^k,\PPk)$ are surjective.
\end{thm}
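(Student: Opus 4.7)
The plan is to mimic the Stark system argument of Theorem \ref{rankdvr} and Proposition \ref{betprp}, replacing the Stark module $\SS_r$ by the stub Kolyvagin module $\KS_r'$ throughout. By Remark \ref{5.7} the Selmer data $(T/\m^k T,\FF,\PPk,r)$ satisfies \ref{h.1}--\ref{h.7} over $R/\m^k$, so for each $k$ Theorem \ref{mthm3} applies: $\KS'_r(T/\m^k T,\PPk)$ is free of rank one over $R/\m^k$, and is computed as $\HH'(\n) = \wedge^r\HFfn(K,T/\m^k T)\otimes G_\n$ for any core vertex $\n$ (for $T/\m^k T$).

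First I would prove the Kolyvagin analogue of Lemma \ref{surjity}: for $j\le k$, the projection $T/\m^k T \onto T/\m^j T$ and the inclusion $\PPk\subset\PP_j$ induce
$$
\KS'_r(T/\m^k T,\PPk) \onto \KS'_r(T/\m^j T,\PPk) \;\bisom\; \KS'_r(T/\m^j T,\PP_j).
$$
The right map is an isomorphism because Theorem \ref{mthm0}(iii) guarantees a core vertex supported in $\PPk$ (any vertex in $\PP_j$ can be enlarged to a core vertex using primes in $\PPk$, since the construction in Proposition \ref{6.12}/Lemma \ref{ylem}(ii) applied over $R/\m^j$ produces primes whose $\Il$ can be arranged inside $\m^k$ using \ref{h.2} and Lemma \ref{okl}), and for such a $\n$ both sides are identified with $\HH'(\n) = \wedge^r\HFfn(K,T/\m^j T)\otimes G_\n$ by Theorem \ref{mthm3}(i). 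The left arrow is surjective because, picking a core vertex $\n$ for $T/\m^k T$ in $\PPk$ (hence also core for $T/\m^j T$ by Proposition \ref{stprops}(ii)), the induced map on stalks
$$
\wedge^r\HFfn(K,T/\m^k T)\otimes G_\n \;\onto\; \wedge^r\HFfn(K,T/\m^j T)\otimes G_\n
$$
is surjective (it is the reduction mod $\m^j$ of a free rank-one $R/\m^k$-module).

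Next I would prove the inverse-limit identification $\KS'_r(T,\PP)\isom \varprojlim_k \KS'_r(T/\m^k T,\PPk)$ verbatim as in Proposition \ref{betprp}: given $\ksys\in\KS'_r(T)$, for $\n\ne 1$ with $\In = \m^j$, the component $\kappa_\n$ lives naturally in $\HH'(\n)$ which already only depends on $T/\m^j T$; for $\n=1$, reading off $\kappa_1$ in $\wedge^r\HF(K,T/\m^k T)$ for each $k$ gives a compatible system, and since $\HF(K,T) = \varprojlim \HF(K,T/\m^k T)$ by \ref{h.1} and standard continuity, these piece together into an element of $\wedge^r\HF(K,T) = \HH(1)$. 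Injectivity is immediate: if every reduction mod $\m^k$ vanishes then every $\kappa_\n$ vanishes. Surjectivity follows by this reverse construction, verifying term-by-term that the resulting collection satisfies the Kolyvagin edge relations \eqref{compdiag} because each relation only involves $T/\Inl T$ and is preserved under the inverse system.

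Finally, with the first step giving a surjective inverse system of finitely generated $R/\m^k$-modules each free of rank one, the inverse limit is free of rank one over $R = \varprojlim R/\m^k$, the transition maps being the natural reduction maps; the limit is thus generated by any system $\ksys$ whose mod-$\m$ image is nonzero, and the maps $\KS'_r(T,\PP)\to\KS'_r(T/\m^k T,\PPk)$ are surjective as projections from an inverse limit of a surjective system of cyclic modules. The main delicate point I expect is the $\n = 1$ case of the inverse-limit isomorphism, where one must commute $\wedge^r$ with $\varprojlim$; this is handled because $\HF(K,T)$ is a finitely generated $R$-module (by \ref{h.1}, \ref{h.3} via standard Selmer finiteness) and so $\wedge^r\HF(K,T) \isom \varprojlim \wedge^r(\HF(K,T)/\m^k \HF(K,T))$ is a formal consequence of the right exactness of $\wedge^r$ combined with the Mittag-Leffler condition on the system.
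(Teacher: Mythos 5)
Your proposal is correct and follows exactly the route the paper intends: the paper's proof of Theorem \ref{betthm} is just the remark that one repeats the arguments of Lemma \ref{surjity}, Proposition \ref{betprp}, and Theorem \ref{rankdvr} with $\SS_r$ replaced by $\KS'_r$ and Theorem \ref{magic} replaced by Theorem \ref{mthm3}, which is what you do. Your filled-in details (existence of a core vertex supported in $\PPk$, surjectivity on stalks, and the identification $\varprojlim_k\wedge^r\HF(K,T/\m^kT)=\wedge^r\HF(K,T)$ at $\n=1$) are the right points to check and are handled at least as carefully as in the paper.
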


\begin{proof}
This can be proved easily directly from Theorem \ref{mthm3}, as in the 
proofs of Proposition \ref{betprp} and Theorem \ref{rankdvr} for Stark systems.
See also \cite[Proposition 5.2.9]{kolysys}.
\end{proof}

\begin{rem}
When $r = \cm(T) > 1$, it is not generally true that $\KS'_r(T) = \KS_r(T)$.  
For example, suppose $R$ is principal artinian of length $k > 1$, and 
suppose $\mm\in\NN$ is such that $\HS{\FF(\mm)}(K,T) \cong R^r \oplus (R/\m)^r$, 
with corresponding basis $c_1,\ldots,c_r, d_1,\ldots,d_r$.  Let $g_\mm$ 
be a generator of $G_\mm$.

For every $\l\in\PP$ and every $i$, $\loc_\l(d_i)$ is killed by $\m$, 
so it is divisible by $\m^{k-1}$ in the free $R$-module $H^1(K_\l,T)$.  
It follows that if we define $\ksys := \{\kappa_\n\}$ where
$$
\kappa_\n := 
\begin{cases}
(d_1 \wedge \cdots \wedge d_r) \otimes g_\mm & \text{if $\n=\mm$},\\
0 & \text{if $\n\ne\mm$},
\end{cases}
$$
then $\ksys$ is a Kolyvagin system, but $\kappa_\mm \notin \HH'(\mm)$ so $\ksys \notin \KS_r'(T)$.
\end{rem}

\section{Kolyvagin systems and Stark systems}
\label{ksnss}

Suppose that $R$ is a principal artinian ring, and 
fix Selmer data $(T,\FF,\PP,r)$ as in Definition \ref{sddef} such that 
$\Il = 0$ for every $\l \in \PP$.
Recall the $R$ module $\Y{\n}$ of Definition \ref{yndef}, and let 
$\loc^\f_\l : H^1(K,T) \to \Ht(K,T) \otimes \Gl$ and 
$\loc^\tr_\l : H^1(K,T) \to \Ht(K,T)$ be the maps of Definition \ref{piqdef}.

\begin{defn}
Suppose $\n\in\NN$.  By \eqref{gi2} we have an exact sequence
$$
0 \too \HFfn(K,T) \too \HS{\FF^\n}(K,T) 
   \map{\oplus \loc_\l^\f} \dirsum{\l\mid\n}\Ht(K_\l,T) \otimes \Gl
$$
and it follows that the square
\begin{equation*}
\xymatrix@R=15pt{
~\HFfn(K,T)~ \ar@{^(->}[r]\ar[d] & ~\HS{\FF^\n}(K,T)~ \ar^{\oplus \loc_\l^\f}[d] \\
~0~ \ar@{^(->}[r] & ~\dirsum{\l\mid\n}\Ht(K_\l,T)\otimes \Gl~
}
\end{equation*}
is cartesian.  Proposition \ref{bhprop}(i,iv) attaches to this diagram a map
$$
\wedge^{r+\nu(\n)}\HS{\FF^\n}(K,T) \otimes \wedge^{\nu(\n)}\Hom(\oplus_{\l\mid\n}\Ht(\Kl,T) \otimes \Gl,R)
   \too \wedge^r\HFfn(K,T).
$$
Tensoring both sides with $\Gn$ defines a map
$$
\Pi_{\n} : \Y{\n} \too \wedge^r\HFfn(K,T) \otimes \Gn.   
$$
\end{defn}

See the proof of Proposition \ref{ss2ks} below for an explicit description of the map $\Pi_\n$.  
Recall that if $\mm\mid\n \in \NN$, then 
$\Psi_{\n,\mm} : \Y{\n} \to \Y{\mm}$ is the map of Definition \ref{4.3}.

\begin{lem}
\label{imlem}
Suppose that hypotheses {\rm\ref{h.1}} through {\rm\ref{h.7}} 
of \S\ref{sscr} are satisfied, so in particular $r$ is the core rank of $T$.
If $\HS{(\FF^*)_{\n}}(K,T^*) = 0$ and $\mm\mid\n$, then 
$$
(\Pi_\mm \circ \Psi_{\n,\mm})(\Y{\n}) = \m^{\length(\HS{\FF(\mm)^*}(K,T^*))}\HH(\mm) = \HH'(\mm).
$$
\end{lem}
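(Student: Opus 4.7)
The plan is to identify the composition $\Pi_\mm \circ \Psi_{\n,\mm} : \Y{\n} \to \HH(\mm)$ with the map arising from one combined cartesian square via a single application of Proposition~\ref{bhprop}. The hypothesis $\HS{(\FF^*)_\n}(K,T^*) = 0$ enters twice: Lemma~\ref{ylem}(iii) gives that $\Y{\n}$ is free of rank one over $R$, and Corollary~\ref{threefive}(ii) gives that $\HS{\FF^\n}(K,T)$ is free over $R$ of rank $r+\nu(\n)$. Using the associativity of the alternating contraction (Proposition~\ref{bhprop}(iii)) to compose the two cartesian squares defining $\Psi_{\n,\mm}$ and $\Pi_\mm$, one sees that $\Pi_\mm \circ \Psi_{\n,\mm}$ is the map induced by Proposition~\ref{bhprop}(i) applied to
$$
\xymatrix@R=16pt@C=16pt{
\HS{\FFf{\mm}}(K,T) \ar@{^(->}[r] \ar[d] & \HS{\FF^\n}(K,T) \ar[d]^{\phi} \\
0 \ar@{^(->}[r] & \bigl(\dirsum{\l \mid \mm} \Ht(K_\l,T) \otimes \Gl\bigr) \oplus \bigl(\dirsum{\l \mid \n/\mm} \Ht(K_\l,T)\bigr)
}
$$
with $\phi := \bigl(\oplus_{\l\mid\mm} \loc_\l^{\f}\bigr) \oplus \bigl(\oplus_{\l\mid\n/\mm} \loc_\l^{\tr}\bigr)$, tensored with $G_\mm$ to cancel the $G_\mm^{-1}$ that appears when expanding $\wedge^{\nu(\n)} D^*$. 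That the square is cartesian follows by iterating \eqref{gi} and \eqref{gi2}: a class in $\HS{\FF^\n}(K,T)$ lies in $\ker(\phi)$ iff it is transverse at every prime dividing $\mm$ and unramified at every prime dividing $\n/\mm$, i.e. iff it lies in $\HS{\FFf{\mm}}(K,T)$.

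Next, I apply Proposition~\ref{bhprop}(ii) to this square exactly as in the proof of Lemma~\ref{ylem}(iv). The image is $\m^e \wedge^r \HS{\FFf{\mm}}(K,T)$, where $e$ is the Euler characteristic of the square. A length count---using that $\HS{\FF^\n}(K,T)$ is free of rank $r+\nu(\n)$, that each $\Ht(K_\l,T)$ and each $\Ht(K_\l,T)\otimes\Gl$ is free of rank one over $R$, and that (by Corollary~\ref{threefive}(i)) $\HS{\FFf{\mm}}(K,T)$ has $R$-length $rk + \length(\HS{(\FFf{\mm})^*}(K,T^*))$---yields $e = \length(\HS{(\FFf{\mm})^*}(K,T^*))$. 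By Proposition~\ref{h1fdual}(ii) the transverse condition is self-orthogonal under the local Tate pairing, so this quantity equals $\length(\HS{\FF(\mm)^*}(K,T^*))$. Tensoring with $G_\mm$ and invoking Definition~\ref{stubsheaf} gives
$$
(\Pi_\mm \circ \Psi_{\n,\mm})(\Y{\n}) \;=\; \m^{\length(\HS{\FF(\mm)^*}(K,T^*))}\,\HH(\mm) \;=\; \HH'(\mm).
$$

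The main obstacle will be the bookkeeping in the first step: unpacking the alternating-sum construction of Proposition~\ref{bhprop}(i) on the combined square and using the canonical identification $(\Ht(K_\l,T) \otimes \Gl)^* \cong \Ht(K_\l,T)^* \otimes \Gl^{-1}$ to verify that the resulting map, after tensoring with $G_\mm$, really coincides with the ordered iterated contraction described in Definition~\ref{4.3} and in the paragraph preceding Lemma~\ref{imlem}, rather than with some differently twisted map.
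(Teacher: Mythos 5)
Your proof is correct and takes essentially the same route as the paper's: identify $\Pi_\mm\circ\Psi_{\n,\mm}$ with the contraction attached to the single cartesian square placing $\HS{\FF(\mm)}(K,T)$ inside $\HS{\FF^\n}(K,T)$ via $\oplus_{\l\mid\mm}\loc_\l^{\f}\oplus_{\l\mid\n/\mm}\loc_\l^{\tr}$ (cartesian by \eqref{gi} and \eqref{gi2}), then compute the image from Proposition \ref{bhprop}(ii,iii) using the freeness of $\HS{\FF^\n}(K,T)$ from Corollary \ref{threefive}(ii) and the length formula from Corollary \ref{threefive}(i). The $\Gl$-twist bookkeeping you flag at the end is real but routine, and the paper's own proof is in fact terser than yours on exactly that point.
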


\begin{proof}
If $\HS{(\FF^*)_{\n}}(K,T^*) = 0$ then $\HS{\FF^\n}(K,T)$ is free of rank $r+\nu(\n)$ over $R$ 
by Corollary \ref{threefive}(ii).  By \eqref{gi} and \eqref{gi2} we have
$$
(\cap_{\l\mid\mm} \ker(\loc_\l^\f|\HS{\FF^\n}(K,T))) 
   \cap (\cap_{\l\mid(\n/\mm)} \ker(\loc_\l^\tr|\HS{\FF^\n}(K,T))) 
   = \HS{\FF(\mm)}(K,T).
$$
Now the lemma follows from Proposition \ref{bhprop}(ii,iii) applied to the cartesian 
square
$$
\xymatrix@R=15pt{
~\HS{\FF(\mm)}(K,T)~ \ar@{^(->}[r]\ar[d] & ~\HS{\FF^\n}(K,T)~ 
   \ar^{\oplus_{\l\mid\mm}\loc_\l^\f \oplus_{\l\mid(\n/\mm)} \loc_\l^\tr}[d] \\
~0~ \ar@{^(->}[r] & ~\dirsum{\l\mid\mm}(\Ht(K_\l,T)\otimes \Gl) \dirsum{\l\mid(\n/\mm)}\Ht(K_\l,T)~
}
$$
\end{proof}

\begin{prop}
\label{ss2ks}
Suppose $\ssys = \{\epsilon_\n : \n\in\NN\}$ is a Stark system of rank $r$ for $T$.  
Let $\Pi(\ssys)$ denote the collection $\{(-1)^{\nu(n)}\Pi_\n(\epsilon_\n) : \n\in\NN\}$.  Then:
\begin{enumerate}
\item
$\Pi(\ssys) \in \KS_r(T)$.
\item
If hypotheses {\rm\ref{h.1}} through {\rm\ref{h.7}} of \S\ref{sscr} hold, 
then $\Pi(\ssys) \in \KS_r'(T)$.
\end{enumerate}
\end{prop}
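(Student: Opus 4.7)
The strategy is to verify the Kolyvagin-system compatibility at each edge for (i), and then derive (ii) from Lemma~\ref{imlem}.

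For (i), fix $\n\l \in \NN$ with edge $e$ joining $\n$ and $\n\l$, and set $\kappa_\n := (-1)^{\nu(\n)}\Pi_\n(\epsilon_\n)$, $\kappa_{\n\l} := (-1)^{\nu(\n\l)}\Pi_{\n\l}(\epsilon_{\n\l})$. Using the Stark-system relation $\Psi_{\n\l,\n}(\epsilon_{\n\l}) = \epsilon_\n$, the needed edge compatibility $\pne(\kappa_\n) = \pnle(\kappa_{\n\l})$ becomes equivalent to the identity of $R$-linear maps $\Y{\n\l}\to \HH(e)$:
\[
\pne\circ\Pi_\n\circ\Psi_{\n\l,\n} \;+\; \pnle\circ\Pi_{\n\l} \;=\; 0.
\]

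I will realize both sides of this identity as two readings of a single application of Proposition~\ref{bhprop} to the cartesian square
\[
\xymatrix@R=14pt@C=16pt{
\HS{\FF_\l(\n)}(K,T)\ar@{^(->}[r]\ar@{^(->}[d] & \HS{\FF^{\n\l}}(K,T)\ar^-{(\oplus_{\l'\mid\n}\loc^\f_{\l'})\oplus\loc_\l}[d] \\
0\ar@{^(->}[r] & (\oplus_{\l'\mid\n}\Ht(K_{\l'},T)\otimes G_{\l'})\oplus H^1(K_\l,T)
}
\]
whose right-hand kernel is $\HS{\FF_\l(\n)}$ by \eqref{gi} and \eqref{gi2}. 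Using Lemma~\ref{isofunct}(i) and the isomorphism $\fsl$, the $\loc_\l$-coordinate splits canonically as $\loc^\f_\l\oplus\loc^\tr_\l$. Within the Proposition~\ref{bhprop} contraction, stripping the $\loc^\tr_\l$-coordinate first factors through $\HS{\FF^\n}(K,T)$ and produces $\Psi_{\n\l,\n}$; the remaining $\loc^\f_{\l'}$-contractions (for $\l'\mid\n$) then produce $\Pi_\n$; and the residual $\loc^\f_\l$-factor yields $\pne$. Alternatively, stripping the $\loc^\f_\l$-coordinate together with the other $\loc^\f_{\l'}$-contractions produces $\Pi_{\n\l}$, after which the residual $\loc^\tr_\l$-factor yields $\pnle$. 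By the anti-commutativity of nested exterior contractions (part of the content of Proposition~\ref{bhprop}), these two orderings differ by an overall sign of $-1$, which is exactly absorbed by the discrepancy $(-1)^{\nu(\n\l)} = -(-1)^{\nu(\n)}$ between the sign conventions of $\kappa_{\n\l}$ and $\kappa_\n$.

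For (ii), given $\n\in\NN$ I will use Lemma~\ref{ylem}(ii) together with the Cebotarev argument of \cite[Proposition~3.6.1]{kolysys} to choose $\d\in\NN$ divisible by $\n$ with $\HS{(\FF^*)_\d}(K,T^*)=0$, by starting from $\n$ and iteratively adjoining primes to kill generators of the residual dual Selmer group. Then $\Psi_{\d,\n}(\epsilon_\d) = \epsilon_\n$ by the Stark compatibility, so Lemma~\ref{imlem} with $\mm=\n$ gives
\[
\Pi_\n(\epsilon_\n) = (\Pi_\n\circ\Psi_{\d,\n})(\epsilon_\d) \in (\Pi_\n\circ\Psi_{\d,\n})(\Y{\d}) = \HH'(\n),
\]
and multiplication by the unit $(-1)^{\nu(\n)}$ preserves this inclusion, giving $\kappa_\n \in \HH'(\n)$ for every $\n$.

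The main obstacle will be the sign analysis in part (i): rigorously showing, via Proposition~\ref{bhprop}'s exterior-algebra machinery and the finite-singular isomorphism $\fsl$, that the two natural contraction orderings of the big cartesian square produce maps differing by exactly $-1$, so that the $(-1)^{\nu(\n)}$ built into the definition of $\Pi(\ssys)$ exactly cancels the discrepancy. Once this is established, (ii) is essentially immediate from Lemma~\ref{imlem}.
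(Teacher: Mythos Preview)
Your plan is essentially the same as the paper's proof: both reduce the edge compatibility to the anti-commutativity of the exterior-algebra contractions $\widehat{h\circ\loc^\f_{\l'}}$ and $\widehat{h\circ\loc^\tr_\l}$, and part (ii) is deduced from Lemma~\ref{imlem} exactly as you describe (your remark that one must arrange the auxiliary ideal $\d$ to be divisible by $\n$ is a point the paper leaves implicit).

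One small caution on your packaging for (i): the sign does \emph{not} come from Proposition~\ref{bhprop}; in fact, Proposition~\ref{bhprop}(iii) asserts that the two factorizations through your big cartesian square agree with \emph{no} sign. The sign arises because the individual maps $\Psi_{\n\l,\n}$, $\Pi_\n$, $\Pi_{\n\l}$, $\psi^e_\n$, $\psi^e_{\n\l}$ are each defined via their own smaller diagrams with their own ordering conventions for the basis of the $W$-factor, and comparing those conventions is exactly the direct anti-commutation computation the paper carries out with the operators $\varpi^{\f}_{\l_i,h_i}$ and $\varpi^{\tr}_{\l,h}$. Concretely, the paper moves $\varpi^{\tr}_{\l,h}$ past $\nu(\n)+1$ contractions and then $\varpi^{\f}_{\l,h}$ past $\nu(\n)$ contractions, yielding $(-1)^{\nu(\n)+1}\cdot(-1)^{\nu(\n)}=-1$. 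Your proposed argument will work once you unwind to this level, but you should not expect Proposition~\ref{bhprop} itself to hand you the sign.
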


\begin{proof}
By definition $\Pi_\n(\epsilon_\n) \in \wedge^r\HFfn(K,T) \otimes \Gn$, 
so we only need to check the compatibility \eqref{compdiag}.

Suppose $\n\l \in \NN$, with $\n = \l_1 \cdots \l_{\nu(\n)}$, and for every $i$ let 
$h_i$ be a generator of $\Hom(\Ht(K_{\l_i},T),R)$ and similarly for $h$ and $\l$.  
Let 
\begin{align*}
\varpi_{\l,h}^\tr &:= \widehat{h \circ \loc_\l^\tr} 
   : \wedge^t\HS{\FF^{\n\l}}(K,T) \to \wedge^{t-1}\HS{\FF^{\n}}(K,T), \\
\varpi_{\l,h}^\f &:= \widehat{h \circ \loc_\l^\f}
  : \wedge^t\HS{\FF^{\n\l}}(K,T) \to \wedge^{t-1}\HS{\FF^{\n}(\l)}(K,T) \otimes \Gl
\end{align*}
be the maps given by Proposition \ref{wedgemap}, for $t > 0$, and similarly for 
$\varpi_{\l_i,h_i}^\tr$ and $\varpi_{\l_i,h_i}^\f$.

Let $\epsilon_{\n\l} = d_{\n\l} \otimes (h_1 \wedge \cdots \wedge h_{\nu(\n)} \wedge h)$ with 
$d_{\n\l} \in \wedge^{r+\nu(\n\l)}\HS{\FF^{\n\l}}(K,T)$, and similarly 
$\epsilon_{\n} = d_{\n} \otimes (h_1 \wedge \cdots \wedge h_{\nu(\n)})$.
By definition of $\Psi_{\n\l,\n}$ we have $d_\n = \varpi_{\l,h}^\tr(d_{\n\l})$. 
If $e$ denotes the edge joining $\n$ and $\n\l$, then
\begin{align*}
(h \otimes 1)(\psi_{\n\l}^e(\Pi_{\n\l}(\epsilon_{\n\l})) )
   &= \varpi_{\l,h}^\tr((\varpi_{\l_{1},h_{1}}^{\f} \circ \cdots \circ \varpi_{\l_{\nu(\n)},h_{\nu(\n)}}^{\f} \circ \varpi_{\l,h}^\f)(d_{\n\l})) \\
   &= (-1)^{\nu(\n)+1}(\varpi_{\l_1,h_1}^{\f} \circ \cdots \circ \varpi_{\l_{\nu(\n)},h_{\nu(\n)}}^{\f} \circ \varpi_{\l,h}^\f \circ \varpi_{\l,h}^\tr)(d_{\n\l}) \\
   &= (-1)^{\nu(\n)+1}(\varpi_{\l_1,h_1}^{\f} \circ \cdots \circ \varpi_{\l_{\nu(\n)},h_{\nu(\n)}}^{\f} \circ \varpi_{\l,h}^\f)(d_\n) \\
   &= -\varpi_{\l,h}^\f ((\varpi_{\l_1,h_1}^{\f} \circ \cdots \circ \varpi_{\l_{\nu(\n)},h_{\nu(\n)}}^{\f})(d_\n)) \\
   &= -(h \otimes 1)(\psi_\n^e(\Pi_\n(\epsilon_\n))).
\end{align*}
Since $h$ is an isomorphism, it follows that 
$\psi_{\n\l}^e(\Pi_{\n\l}(\epsilon_{\n\l})) = -\psi_\n^e(\Pi_\n(\epsilon_\n))$, 
so the collection $\{(-1)^{\nu(\n)}\Pi_\n(\epsilon_\n)\}$ is a Kolyvagin system.  
This proves (i), and (ii) follows from Lemma \ref{imlem} (using Lemma \ref{ylem}(ii)).
\end{proof}

\begin{thm}
\label{mthm3'}
If hypotheses {\rm\ref{h.1}} through {\rm\ref{h.7}} of \S\ref{sscr} hold, then 
the $R$-module map $\Pi : \SS_r(T) \to \KS'_r(T)$ of Proposition \ref{ss2ks} 
is an isomorphism.
\end{thm}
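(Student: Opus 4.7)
The plan is to show that $\Pi$ sends a generator of $\SS_r(T)$ to a generator of $\KS'_r(T)$. Since both modules are free of rank one over the principal artinian local ring $R$ (by Theorem \ref{magic} and Theorem \ref{mthm3}(i) respectively), a map that carries one generator to a generator is automatically an isomorphism: over a principal artinian local ring, any $R$-linear surjection $R \to R$ is an isomorphism, as a surjective element must be a unit.

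First I would fix a generator $\ssys = \{\epsilon_\n\}$ of $\SS_r(T)$, which exists by Theorem \ref{magic}. By Proposition \ref{ss2ks}(ii), $\Pi(\ssys) \in \KS'_r(T)$. To show it generates, I will use Theorem \ref{mthm3}(i): it suffices to exhibit one core vertex $\mm$ such that the $\mm$-component of $\Pi(\ssys)$, namely $(-1)^{\nu(\mm)} \Pi_\mm(\epsilon_\mm)$, generates the cyclic $R$-module $\HH'(\mm)$.

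The key is to choose $\mm$ cleverly so that two auxiliary vanishings hold simultaneously: (a) $\mm$ is a core vertex, i.e.\ $\HFms(K,T^*) = 0$, and (b) $\HS{(\FF^*)_\mm}(K,T^*) = 0$. For (b), Lemma \ref{ylem}(ii) produces some $\n_0 \in \NN$ with $\HS{(\FF^*)_{\n_0}}(K,T^*) = 0$. By Theorem \ref{mthm0}(iii), there is a core vertex $\mm$ divisible by $\n_0$; since the strict-at-$\mm$ dual Selmer group is a subgroup of the strict-at-$\n_0$ one (more restrictive local conditions at additional primes), condition (b) is inherited and $\mm$ satisfies both. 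With such an $\mm$ in hand, Lemma \ref{ylem}(iii) shows that $\Y{\mm}$ is free of rank one, and Theorem \ref{magic} then implies that $\epsilon_\mm$, the image of the generator $\ssys$ under the projection $\SS_r(T) \to \Y{\mm}$, is itself a generator of $\Y{\mm}$. Finally, applying Lemma \ref{imlem} with $\n = \mm$ (so $\Psi_{\mm,\mm}$ is the identity) gives $\Pi_\mm(\Y{\mm}) = \HH'(\mm)$, hence $\Pi_\mm(\epsilon_\mm)$ generates $\HH'(\mm)$, as required.

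I do not anticipate a serious obstacle here; once the packaging is set up, each step reduces to a previously established result. The only point needing care is the simultaneous choice of $\mm$ satisfying both (a) and (b), and this falls out directly from combining Lemma \ref{ylem}(ii) with Theorem \ref{mthm0}(iii) and the monotonicity of the dual Selmer group under strengthening local conditions. All the real work has already been done in setting up the sheaf-theoretic machinery (Theorem \ref{mthm0}, Theorem \ref{mthm3}) and in Lemma \ref{imlem}, which is precisely the compatibility needed at the level of stalks.
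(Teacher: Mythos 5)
Your proof is correct and follows essentially the same route as the paper: both combine Lemma \ref{imlem} with Theorem \ref{magic} to show that the composite $\SS_r(T) \to \KS'_r(T) \to \HH'(\mm)$ is surjective at a suitable vertex $\mm$, and then conclude from the rank-one freeness of both modules (Theorems \ref{magic} and \ref{mthm3}(i)). (Your construction of $\mm$ is slightly more elaborate than necessary, since any core vertex already satisfies $\HS{(\FF^*)_{\mm}}(K,T^*) \subset \HS{\FF(\mm)^*}(K,T^*) = 0$.)
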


\begin{proof}
By Lemma \ref{imlem} and Theorem \ref{magic}, for every $\n$ the composition 
$$
\SS_r(T) \map{\;\Pi\;} \KS'_r(T) \too \HH'(\n)
$$
is surjective.  Since $\SS_r'(T)$ and $\KS_r'(T)$ are both free of rank one 
over $R$ (Theorems \ref{magic} and \ref{mthm3}(i)), it follows that 
$\Pi$ is an isomorphism.
\end{proof}

\section{Stub Kolyvagin systems and the dual Selmer group}
\label{sksds}

Suppose for this section that $R$ is either a principal artinian local ring or a discrete 
valuation ring.  We let $k := \length(R)$, so $k$ is finite in the artinian case 
and $k = \infty$ in the discrete valuation ring case.

Fix Selmer data $(T,\FF,\PP,r)$ satisfying hypotheses \ref{h.1} 
through \ref{h.6}, and if $R$ is artinian satisfying \ref{h.7} as well.
In this section we prove analogues for stub Kolyvagin systems of the results of 
\S\ref{extra} for Stark systems.
We will say that a stub Kolyvagin system $\ksys$ is primitive if it is primitive as 
a global section of the stub Selmer sheaf $\HH'$ (Definition \ref{perf}), i.e., if 
$\ksys$ generates the $R$-module $\KS'_r(T)$, or equivalently, if 
$\kappa_\n$ generates 
$\m^{\mesh(\n)}(\wedge^r\HFfn(K,T))\otimes\Gn$ for every $\n \in \NN$.

\begin{cor}
\label{lowerbound}
Suppose $R$ is a principal artinian ring of length $k$, and $\ksys \in \KS'_r(T)$.
\begin{enumerate}
\item
If $\kappa_1 \ne 0$ then
$$
\length(\HFs(K,T^*)) \le k - \length(R\kappa_1) 
    = \max\{i : \kappa_1 \in \m^i \wedge^r\HF(K,T)\}.$$
\item
If $\ksys$ is primitive and $\kappa_1 \ne 0$, then equality holds in {\rm(i)}.
\item
If $\ksys$ is primitive and $\kappa_1 = 0$, then $\length(\HFs(K,T^*)) \ge k$.
\end{enumerate}
\end{cor}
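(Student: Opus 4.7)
The plan is to reduce to the case of a primitive stub Kolyvagin system and analyze the stalk $\HH'(1)$ at the trivial ideal. By Theorem \ref{mthm3}(i) the module $\KS'_r(T)$ is free of rank one over $R$, so I would fix a primitive generator $\ksys^{\mathrm{prim}}$ and write $\ksys = \alpha\,\ksys^{\mathrm{prim}}$ with $\alpha \in R$. The sheaf $\HH'$ is locally cyclic with core vertices as hubs by Theorem \ref{mthm0}, so Proposition \ref{lcgraphprop}(ii) propagates primitivity from any core vertex to every other vertex; in particular $\kappa_1^{\mathrm{prim}}$ generates the stalk $\HH'(1) = \m^{\mesh(1)}\wedge^r \HF(K,T)$, even though $\n=1$ is typically not itself a core vertex.

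Next I would identify $\HH'(1)$ as an $R$-module. Since $\mesh(1) = \length(\HFs(K,T^*))$, Corollary \ref{threefive}(iii) at $\n = 1$ gives $\HH'(1) \cong \m^{\length(\HFs(K,T^*))}$, a cyclic module of length $\max\{k - \length(\HFs(K,T^*)),\,0\}$ sitting inside $\wedge^r\HF(K,T)$. In particular $\HH'(1)$ vanishes exactly when $\length(\HFs(K,T^*)) \ge k$, which delivers (iii) immediately: if $\ksys$ is primitive with $\kappa_1 = 0$, then the generator $\kappa_1^{\mathrm{prim}}$ of $\HH'(1)$ must be zero, forcing $\length(\HFs(K,T^*)) \ge k$.

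For (ii), primitivity with $\kappa_1 \ne 0$ means $\kappa_1$ itself generates $\HH'(1)$, so $\length(R\kappa_1) = k - \length(\HFs(K,T^*))$ and $\max\{i : \kappa_1 \in \m^i\wedge^r\HF(K,T)\} = \length(\HFs(K,T^*))$ by the embedding of $\HH'(1)$ inside $\wedge^r\HF(K,T)$. For (i) I would write $\alpha = u\pi^a$ with $u \in R^\times$ and $\pi$ a generator of $\m$; nonvanishing of $\kappa_1 = u\pi^a\,\kappa_1^{\mathrm{prim}}$ forces $a < k - \length(\HFs(K,T^*))$, and the annihilator $\m^{k - \length(\HFs(K,T^*))}$ of $\kappa_1^{\mathrm{prim}}$ then yields both $\length(R\kappa_1) = k - \length(\HFs(K,T^*)) - a$ and $\max\{i : \kappa_1 \in \m^i\wedge^r\HF(K,T)\} = \length(\HFs(K,T^*)) + a$, giving simultaneously the equality and the inequality asserted in (i). The only subtlety is the appeal to primitivity propagation from a core vertex down to $\n = 1$, which is exactly what Proposition \ref{lcgraphprop}(ii) is designed for; once Theorems \ref{mthm0} and \ref{mthm3} are in hand, the corollary is essentially bookkeeping on a single stalk and there is no serious obstacle.
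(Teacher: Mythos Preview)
Your proof is correct and follows essentially the same strategy as the paper: identify $\HH'(1)$ via Corollary~\ref{threefive}(iii) as a cyclic module of length $\max\{0,k-\length(\HFs(K,T^*))\}$, and read off (i)--(iii) from the position of $\kappa_1$ inside it. The paper's argument is more direct in two places. First, the equivalence ``$\ksys$ primitive $\Leftrightarrow$ $\kappa_\n$ generates $\HH'(\n)$ for every $\n$'' is taken as the \emph{definition} of primitive just before the corollary, so your appeal to Theorem~\ref{mthm0} and Proposition~\ref{lcgraphprop}(ii) to propagate primitivity to $\n=1$ is unnecessary. Second, for (i) the paper does not introduce a primitive generator or write $\alpha = u\pi^a$ at all: it simply observes that $\kappa_1 \in \HH'(1)$ by definition of $\KS'_r(T) = \Gamma(\HH')$, and since $\HH'(1)$ is cyclic of length $k - \length(\HFs(K,T^*))$ embedded as $\m^{\length(\HFs(K,T^*))}\wedge^r\HF(K,T)$, both the inequality and the displayed equality follow immediately. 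Your decomposition recovers the same facts but with more bookkeeping.
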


\begin{proof}
By Corollary \ref{threefive}(iii), $\HH'(1) = \m^{\mesh(1)}\wedge^r\HF(K,T)$ 
is a cyclic $R$-module of length 
$\max\{0,k-\length(\HFs(K,T^*))\}$.
Since $\kappa_1 \in \HH'(1)$ by definition, (i) follows.  
If $\ksys$ is primitive, then $\kappa_1$ generates $\HH'(1)$, 
which proves (ii) and (iii).
\end{proof} 

The following definition is the analogue for Kolyvagin systems of 
Definitions \ref{allde} and \ref{allder} for Stark systems.

\begin{defn}
\label{allderk}
Suppose $\ksys \in \KS_r(T)$ is a Kolyvagin system.
Define $\divity_{\ksys} \in \Maps(\NN,\Z_\ge 0 \cup \{\infty\})$ by 
$\divity_{\ksys}(\n) := \max\{j : \kappa_\n \in \mm^j\HFfns(K,T)\}$.
The {\em order of vanishing} of $\ksys$ is 
$$
\ordv{\ksys} := \min\{\nu(\n) : \n \in \NN, \kappa_\n \ne 0\} 
   = \min\{i : \partial\divity_{\ksys}(i) \ne \infty\}.
$$
We also define the  sequence of {\em elementary divisors} 
$$
d_{\ksys}(i) := \partial\divity_{\ksys}(i) - \partial\divity_{\ksys}(i+1), 
   \quad i \ge \ordv{\ksys}.
$$
\end{defn}

\begin{prop}
\label{prekork}
Suppose that $\ksys \in \KS'_r(T)$, $\ssys \in \SS_r(T)$, and $\ksys = \Pi(\ssys)$.  
Then $\ord(\ksys) = \ord(\ssys)$, 
$\partial\divity_{\ksys}(i) = \partial\divity_{\ssys}(i)$ for every $i$, 
and $d_{\ksys}(i) = d_{\ssys}(i)$ for every $i$.
\end{prop}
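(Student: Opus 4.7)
The plan is to show that the two divisibility functions $\divity_{\ksys}$ and $\divity_{\ssys}$ on $\NN$ actually coincide pointwise; once this is established the equality of $\ordv{\ksys}$ with $\ordv{\ssys}$ and of the elementary divisor sequences $d_{\ksys}(i)$ and $d_{\ssys}(i)$ follow directly from Definitions \ref{allde}, \ref{allder} and \ref{allderk} by applying $\partial$ and taking successive differences.

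First I would use the structural results to fix a single ``index of divisibility''. In the artinian case Theorem \ref{mthm3'} says $\Pi \colon \SS_r(T) \to \KS'_r(T)$ is an isomorphism, and in the discrete valuation ring case the same conclusion follows by combining Proposition \ref{betprp} and Theorem \ref{betthm} with the artinian assertion applied to each $T/\m^k T$. Since $\SS_r(T)$ and $\KS'_r(T)$ are both free of rank one over $R$ (Theorems \ref{magic}, \ref{mthm3}, \ref{rankdvr}, \ref{betthm}), the nonzero $\ssys$ and $\ksys = \Pi(\ssys)$ generate the same submodule, so there is a single $s \ge 0$ with $\ssys$ generating $\m^s \SS_r(T)$ and $\ksys$ generating $\m^s \KS'_r(T)$.

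Next I would transfer this ``generates $\m^s$'' condition to each stalk. On the Stark side, by Theorem \ref{magic} (artinian) or Theorem \ref{rankdvr} (discrete valuation ring) together with Lemma \ref{ylem}, the image of the projection $\SS_r(T) \to \Y{\n}$ is the cyclic module $\Y{\n}' = \m^{\smesh(\n)}\Y{\n}$, so $\epsilon_\n$ generates $\m^{s+\smesh(\n)}\Y{\n}$ and hence $\divity_{\ssys}(\n) = s + \smesh(\n)$ when finite, and $\epsilon_\n = 0$ (i.e.\ $\divity_{\ssys}(\n) = \infty$) otherwise. On the Kolyvagin side, the stub sheaf $\HH'$ is locally cyclic with core vertices as hubs and trivial monodromy (Theorems \ref{mthm0}(iii) and \ref{mthm3}(iii)), so Proposition \ref{lcgraphprop}(ii) forces $\kappa_\n$ to generate $\m^s \HH'(\n) = \m^{s+\mesh(\n)}\HH(\n)$ uniformly in $\n$; this gives $\divity_{\ksys}(\n) = s + \mesh(\n)$ in the same sense.

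Finally I would verify the identity $\smesh(\n) = \mesh(\n)$. By Proposition \ref{h1fdual}(ii) the transverse local condition is self-orthogonal under the local Tate pairing, so the Selmer structure dual to $\FF(\n)$ (i.e.\ $\FF$ modified by imposing the transverse condition at primes of $\n$) is exactly $(\FF^*)_\n$, and taking $R$-lengths of the corresponding Selmer groups yields $\mesh(\n) = \length(\HS{\FF(\n)^*}(K,T^*)) = \length(\HS{(\FF^*)_\n}(K,T^*)) = \smesh(\n)$. Combined with the previous step this gives $\divity_{\ksys} = \divity_{\ssys}$, and the proposition follows. The main obstacle will be the discrete valuation ring bookkeeping: one needs the isomorphism $\Pi$ over $R$ itself, which requires checking compatibility of $\Pi$ with the inverse-limit descriptions of $\SS_r(T,\PP)$ and $\KS'_r(T,\PP)$ over the $T/\m^k T$; everything else is a direct application of the structural theorems of Parts \ref{part2} and \ref{part3}.
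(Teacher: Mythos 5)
Your overall strategy (reduce to generators via the isomorphism $\Pi$, compute $\divity$ on each stalk from the ``generates $\m^s$'' property) is the same as the paper's, but your final step contains a genuine error that breaks the argument as written. You claim that the dual of the Selmer structure $\FF(\n)$ is $(\FF^*)_\n$, i.e.\ that the transverse condition at $\l\mid\n$ dualizes to the \emph{strict} condition. Proposition \ref{h1fdual}(ii) says the opposite: $\Ht(K_\l,T)$ and $\Ht(K_\l,T^*)$ are orthogonal complements, so the dual of ``transverse at $\n$'' is again ``transverse at $\n$'', i.e.\ $\FF(\n)^* = \FF^*(\n)$, not $(\FF^*)_\n$. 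Consequently $\mesh(\n) = \length(\HFfns(K,T^*))$ and $\smesh(\n) = \length(\HS{(\FF^*)_\n}(K,T^*))$ are \emph{not} equal in general; since the strict condition is contained in the transverse one, all one has pointwise is $\smesh(\n) \le \mesh(\n)$ (this inequality appears explicitly as \eqref{eq:3} in the proof of Proposition \ref{6.12}). Your plan of establishing the pointwise identity $\divity_{\ksys}(\n) = \divity_{\ssys}(\n)$ therefore cannot succeed: with your own stalk computations one gets $\divity_{\ksys}(\n) = s + \mesh(\n)$ and $\divity_{\ssys}(\n) = s + \smesh(\n)$ (suitably truncated at $k$), and these genuinely differ for some $\n$.

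The repair is to aim only at the statement actually asserted, which concerns $\partial\divity$, i.e.\ the minimum over all $\n$ with $\nu(\n) = i$. Proposition \ref{6.12} shows $\partial\mesh(i) = \partial\smesh(i) = \sum_{j>i} e_j$ for every $i$, by exhibiting for each $i$ a single $\n$ with $\nu(\n)=i$ at which both quantities achieve the common minimum. Feeding that into your stalk computations gives $\partial\divity_{\ksys}(i) = \partial\divity_{\ssys}(i)$ (with the appropriate convention when the value is $\ge k$), from which the equalities of $\ordv{\cdot}$ and of the $d(i)$ follow as you say. The rest of your argument, including the reduction of the discrete valuation ring case to the artinian case via the inverse-limit descriptions, is sound and matches the paper.
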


\begin{proof}
Suppose first that $R$ is artinian of length $k$.  Since $\Pi$ is an isomorphism 
(Theorem \ref{mthm3'}), 
we may assume  without loss of generality 
that $\ksys$ and $\ssys$ generate $\KS'_r(T)$ and $\SS_r(T)$, 
respectively.  Recall that $\smesh(\n) := \length(\HS{(\FF^*)_\n}(K,T^*))$.
%Let
%\begin{align*}
%m_i(\ksys) &:= \min\{\mesh(\n) : \n\in\NN, \nu(\n) = i\}, \\
%m_i(\ssys) &:= \min\{\smesh(\n) : \n\in\NN, \nu(\n) = i\} 
%\end{align*}

For every $\n\in\NN$, Theorem \ref{mthm3}(ii) shows that 
$\kappa_\n$ generates $\m^{\mesh(\n)}\cS(\n)$, 
and Theorem \ref{magic} shows that $\epsilon_\n$ generates $\m^{\smesh(\n)}\Y{\n}$.
Thus
$$
\partial\divity_{\ksys}(i) = \begin{cases} \partial\mesh(i) & \text{if $\partial\mesh(i) < k$}, \\
   \infty & \text{if $\partial\mesh(i) \ge k$}, \end{cases} \qquad
\partial\divity_{\ssys}(i) = \begin{cases} \partial\smesh(i) & \text{if $\partial\smesh(i) < k$}, \\
   \infty & \text{if $\partial\smesh(i) \ge k$}. \end{cases}
$$
By Proposition \ref{6.12}, $\partial\mesh(i) = \partial\smesh(i)$ for every $i$, 
and all the equalities of the Proposition follow.

The case where $R$ is a discrete valuation ring follows from 
the artinian case as in the proof of Proposition \ref{11.4}.
\end{proof}

\begin{thm}
\label{korankk}
Suppose $R$ is a discrete valuation ring, $\ksys \in \KS'_r(T)$ and $\ksys \ne 0$.  Then:
\begin{enumerate}
\item
the sequence $\partial\divity_{\ksys}(t)$ is nonincreasing, and finite for $t \ge \ordv{\ksys}$, 
\item
the sequence $d_{\ksys}(i)$ is nonincreasing, nonnegative, 
and finite for $i \ge \ordv{\ksys}$, 
\item
$\ordv{\ksys}$ and the $d_{\ksys}(i)$ are independent of the choice of 
nonzero $\ksys \in \KS'_r(T)$, 
\item
$\corank_R(\HFs(K,T^*)) = \ordv{\ksys}$, 
\item
$\HFs(K,T^*)/(\HFs(K,T^*))_\dv \cong 
    \oplus_{i\ge\ordv{\ksys}} R/\m^{d_{\ksys}(i)}$,
\item
$\length_R(\HFs(K,T^*)/(\HFs(K,T^*))_\dv) 
    = \partial\divity_{\ksys}(\ordv{\ksys})-\partial\divity_{\ksys}(\infty)$, 
where $\partial\divity_{\ksys}(\infty) := \lim_{t \to \infty}\partial\divity_{\ksys}(t)$
\item
$\ksys$ is primitive if and only if $\partial\divity_{\ksys}(\infty) = 0$,
\item
$\length(\HFs(K,T^*))$ is finite if and only if $\kappa_1 \ne 0$,
\item
$
\length(\HFs(K,T^*)) \le \partial\divity_{\ksys}(0) = \max\{s : \kappa_1 \in \m^s\wedge^r\HF(K,T)\},
$
with equality if and only if $\ksys$ is primitive.
\end{enumerate}
\end{thm}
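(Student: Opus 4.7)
The plan is to reduce Theorem \ref{korankk} to the Stark system analogue (Theorem \ref{korank}) via the comparison map $\Pi$ between Stark systems and stub Kolyvagin systems. The key observation is that essentially all the hard work has already been done: Proposition \ref{prekork} asserts that the invariants $\ordv{\ksys}$, $\partial\divity_{\ksys}$, and $d_{\ksys}$ are preserved by $\Pi$, so transport of structure along $\Pi$ converts Theorem \ref{korank} into Theorem \ref{korankk}.

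The first step is to establish that $\Pi : \SS_r(T) \to \KS'_r(T)$ is an isomorphism in the discrete valuation ring setting. In the artinian case this is Theorem \ref{mthm3'}, which requires hypothesis \ref{h.7}. For the DVR setting, I would apply Theorem \ref{mthm3'} to each quotient $T/\m^k T$ (noting that $(T/\m^k T,\FF,\PPk,r)$ satisfies \ref{h.1} through \ref{h.7} by Remark \ref{5.7}) to get compatible isomorphisms $\Pi_k : \SS_r(T/\m^k T,\PPk) \isom \KS'_r(T/\m^k T,\PPk)$, and then pass to the inverse limit using Proposition \ref{betprp} on the Stark side and Theorem \ref{betthm} on the Kolyvagin side. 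This yields an isomorphism $\Pi : \SS_r(T,\PP) \isom \KS'_r(T,\PP)$, and in particular identifies the notions of primitivity since both correspond to being a generator of the respective rank-one $R$-modules.

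Given a nonzero $\ksys \in \KS'_r(T)$, set $\ssys := \Pi^{-1}(\ksys) \in \SS_r(T)$, which is also nonzero. By Proposition \ref{prekork} (whose DVR version is noted there to follow from the artinian case), we have equalities $\ordv{\ksys} = \ordv{\ssys}$, $\partial\divity_{\ksys}(t) = \partial\divity_{\ssys}(t)$ for all $t$, and $d_{\ksys}(i) = d_{\ssys}(i)$ for all $i$. Assertions (i), (ii), (iii), (iv), (v), (vi), and (vii) of Theorem \ref{korankk} now follow immediately by applying the corresponding parts of Theorem \ref{korank} to $\ssys$, together with the observation that $\ksys$ is primitive if and only if $\ssys$ is primitive (since $\Pi$ is an isomorphism of rank-one $R$-modules). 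Assertion (viii) is equivalent to the statement $\ordv{\ksys} = 0$, which by (iv) translates into $\corank_R(\HFs(K,T^*)) = 0$, i.e., finiteness of $\HFs(K,T^*)$; this is precisely what (viii) of Theorem \ref{korank} gives for $\ssys$. Assertion (ix) combines the length formula of (vi) with the fact that $\partial\divity_{\ksys}(0)$ equals $\max\{s : \kappa_1 \in \m^s \wedge^r \HF(K,T)\}$ by definition of $\divity_{\ksys}(1)$.

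The only real obstacle in this plan is the passage to the inverse limit for $\Pi$ in the DVR case, specifically checking that the maps $\Pi_k$ are compatible with the transition maps of Lemma \ref{surjity} and Theorem \ref{betthm}. This compatibility should follow from the functoriality of the construction of $\Pi_n$ in Proposition \ref{ss2ks} with respect to the quotient maps $T/\m^{k+1}T \to T/\m^kT$, but it is worth writing out carefully. Everything else is a direct translation through Proposition \ref{prekork}.
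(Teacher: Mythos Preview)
Your proposal is correct and follows essentially the same approach as the paper: use the isomorphism $\Pi$ to pass from $\ksys$ to a Stark system $\ssys$, invoke Proposition \ref{prekork} to identify the invariants, and then read off everything from Theorem \ref{korank}. The paper's proof is in fact terser than yours---it cites Theorem \ref{mthm3'} directly for the existence of $\ssys$ without spelling out the inverse-limit passage to the DVR case---so your extra care about compatibility of the $\Pi_k$ is simply filling in a detail the paper leaves implicit.
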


\begin{proof}
By Theorem \ref{mthm3'}, there is a (unique) $\ssys \in \SS_r(T)$ such that 
$\Pi(\ssys) = \ksys$.  By Proposition \ref{prekork}, all the invariants of 
Definition \ref{allderk} attached to $\ksys$ are equal to the corresponding 
invariants of $\ssys$.  Now the theorem follows from Theorem \ref{korank}.
\end{proof}

\section{Proof of Theorem \ref{mthm0}}
\label{m0pf}

Keep the notation of \S\ref{stubks}, so $R$ is principal and artinian of length $k$, 
hypotheses \ref{h.1} through \ref{h.7} hold.  In particular we assume 
that $r = \cm(T)$, the core rank of $T$.  

\begin{lem}
\label{9.1}
The sheaf $\HH'$ is locally cyclic.
\end{lem}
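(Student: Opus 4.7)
The plan is to verify the three conditions in Definition \ref{lcdef} for $\HH'$: namely that each stalk $\HH'(\n)$ is a cyclic $R$-module, each edge module $\HH'(e)$ is cyclic, and each vertex-to-edge map $\pne$ restricts to a surjection $\HH'(\n) \onto \HH'(e)$.

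For the stalks, I would unwind the definition $\HH'(\n) = \m^{\mesh(\n)}(\wedge^r\HFfn(K,T)) \otimes \Gn$ and apply Corollary \ref{threefive}(iii) with $r = \cm(T)$, which directly yields $\m^{\mesh(\n)}\wedge^r\HFfn(K,T) \cong \m^{\mesh(\n)}$ as $R$-modules, a cyclic $R$-module. Hypothesis \ref{h.7} ensures $\Il = 0$ for each $\l \in \PP$, so $\In = 0$, and hence $\Gn = \Gn \otimes R/\In$ is free of rank one over $R$ by the remark in Definition \ref{ksp}. The tensor product of a cyclic $R$-module with a free rank-one $R$-module is cyclic, giving cyclicity of $\HH'(\n)$.

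For the edge modules and the surjectivity of vertex-to-edge maps, the point is essentially packaged into Definition \ref{stubsheaf}: by construction $\HH'(e)$ is the image of $\HH'(\n)$ under $\pne$, and Proposition \ref{eqim} guarantees that this image agrees with $\pnle(\HH'(\n\l))$, so $\HH'(e)$ is well-defined and equals both images. Since $\HH'(\n)$ is cyclic, its image $\HH'(e) = \pne(\HH'(\n))$ under the $R$-module map $\pne$ is automatically cyclic, and the restriction $\pne : \HH'(\n) \to \HH'(e)$ is tautologically surjective; the same holds for $\pnle$.

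There is really no obstacle: once one has Corollary \ref{threefive}(iii) computing the structure of $\m^{\mesh(\n)}\wedge^r\HFfn(K,T)$ and Proposition \ref{eqim} ensuring compatibility of the two vertex-to-edge images, everything follows formally from the definition of $\HH'$. Thus the proof will be essentially one short paragraph citing these two inputs together with hypothesis \ref{h.7}.
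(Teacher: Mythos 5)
Your proposal is correct and follows the paper's own argument exactly: Corollary \ref{threefive}(iii) gives cyclicity of the stalks $\HH'(\n)$, and the surjectivity of the vertex-to-edge maps (hence cyclicity of the edge modules) is built into Definition \ref{stubsheaf} together with Proposition \ref{eqim}. Your added remark that $\Gn$ is free of rank one under \ref{h.7} is a detail the paper leaves implicit, but the route is the same.
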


\begin{proof}
By Corollary \ref{threefive}(iii), for every $\n\in\NN$ the stalk
$\HH'(\n)$ is a cyclic $R$-module.
By Definition \ref{stubsheaf} and Proposition \ref{eqim} the vertex-to-edge maps $\pne$ are all 
surjective, and so the edge stalks $\HH'(e)$ are all cyclic as well.
\end{proof}

\begin{lem}
\label{9.2}
Suppose $\n$ is a core vertex, and $\l \in \PP$ does not divide $\n$.  
Let $e$ denote the edge joining $\n$ and $\n\l$.  
Then the following are equivalent:
\begin{enumerate}
\item
$\loc_\l : \HFfn(K,T)[\m] \to \Hf(K_\l,T)$ is nonzero,
\item
$\n\l$ is a core vertex and 
both maps $\pne : \HH(\n) \to \HH(e)$, $\pnle : \HH(\n\l) \to \HH(e)$ 
are isomorphisms.
\end{enumerate}
\end{lem}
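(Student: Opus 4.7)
Under hypothesis \ref{h.7}, Lemma \ref{applemma} gives that both $\Hf(\Kl,T)$ and $\Ht(\Kl,T)$ are free of rank one over $R$, and since $\n$ is a core vertex, $\HFfn(K,T)$ is free of rank $r$. The strategy is to first reformulate (i) as surjectivity of the localization map, then to use the resulting split exact sequence together with Proposition \ref{eqim} for the forward direction, and to analyze the module structure of the kernel of the localization map for the reverse direction.

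First I would observe that, since $\HFfn(K,T) \cong R^r$ and $\Hf(\Kl,T) \cong R$, the image $\loc_\l(\HFfn(K,T))$ equals $\m^j\Hf(\Kl,T)$ for some $j$ with $0 \le j \le k$. Since $\HFfn(K,T)[\m] = \m^{k-1}\HFfn(K,T)$, the image under $\loc_\l$ is $\m^{j+k-1}\Hf(\Kl,T)$, which is nonzero if and only if $j = 0$. Hence (i) is equivalent to $\loc_\l : \HFfn(K,T) \to \Hf(\Kl,T)$ being surjective.

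For (i) $\Rightarrow$ (ii), assuming surjectivity, the short exact sequence
\[
0 \too \HS{\FF_\l(\n)}(K,T) \too \HFfn(K,T) \map{\loc_\l} \Hf(\Kl,T) \too 0
\]
splits, so $\HS{\FF_\l(\n)}(K,T)$ is free of rank $r-1$ and $\wedge^{r-1}\HS{\FF_\l(\n)}(K,T)$ is free of rank one, making $\HH(e)$ free of rank one. Composing with the isomorphism $\fsl$, the map $\loc_\l^\f$ is also surjective, so by Proposition \ref{wedgemap} the induced map $\widehat{\loc_\l^\f}$ is surjective; since source and target are both free of rank one, $\pne$ is an isomorphism. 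Applying Proposition \ref{eqim} with $\mesh(\n) = 0$ gives $\pnle(\m^{\mesh(\n\l)}\HH(\n\l)) = \pne(\HH(\n)) = \HH(e)$, and since $\m^{\mesh(\n\l)}\HH(\n\l)$ is cyclic of length $k - \mesh(\n\l)$ by Corollary \ref{threefive}(iii), its surjection onto $\HH(e)$ (of length $k$) forces $\mesh(\n\l) = 0$; that is, $\n\l$ is a core vertex. The same argument then shows $\pnle$ is an isomorphism.

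For (ii) $\Rightarrow$ (i), if $\pne$ is an isomorphism then $\HH(e)$ must be free of rank one. Writing $\loc_\l(\HFfn(K,T)) = \m^j\Hf(\Kl,T)$ and choosing an adapted $R$-basis of $\HFfn(K,T)$, one computes $\HS{\FF_\l(\n)}(K,T) \cong R^{r-1} \oplus R/\m^j$, whence $\wedge^{r-1}\HS{\FF_\l(\n)}(K,T) \cong R \oplus (R/\m^j)^{r-1}$. For $r \ge 2$ this is free of rank one only if $j = 0$, establishing (i); for $r = 1$ the map $\pne$ reduces to $\loc_\l^\f : \HFfn(K,T) \to \Ht(\Kl,T) \otimes \Gl$, whose being an isomorphism directly yields surjectivity of $\loc_\l$. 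The main obstacle will be this reverse-direction module-theoretic analysis: one must decompose the kernel $\HS{\FF_\l(\n)}(K,T)$ using an adapted basis and compute its $(r-1)$-st exterior power, and handle $r = 1$ separately since then $\wedge^0 = R$ is trivially free of rank one and the conclusion must be extracted from $\pne$ directly.
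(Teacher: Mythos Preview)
Your proof is correct, but it diverges from the paper's in two places worth noting.

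For (i) $\Rightarrow$ (ii), the paper establishes that $\n\l$ is a core vertex and that $\loc_\l : \HFfnl(K,T) \to \Ht(\Kl,T)$ is surjective by citing \cite[Lemma 4.1.6]{kolysys}, a global-duality statement external to this paper, and then applies Proposition~\ref{wedgemap} to conclude that $\pnle$ is an isomorphism. You instead stay inside the paper: you use Proposition~\ref{eqim} (which already encodes the relevant duality via \cite[Lemma 4.1.7]{kolysys}) together with the length count from Corollary~\ref{threefive}(iii) to force $\mesh(\n\l)=0$, and then both $\pnle$-facts follow at once. This is a genuinely different route and arguably tidier, since Proposition~\ref{eqim} has already packaged the duality input for you.

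For (ii) $\Rightarrow$ (i), however, you work harder than necessary. The paper simply invokes the final sentence of Proposition~\ref{wedgemap}: since $\HFfn(K,T)$ is free of rank~$r$, the map $\widehat{\loc_\l^\f}$ is an isomorphism if and only if $\loc_\l^\f$ is surjective, which (via the isomorphism $\fsl$) is equivalent to surjectivity of $\loc_\l$. Your explicit computation of $\wedge^{r-1}\HS{\FF_\l(\n)}(K,T) \cong R \oplus (R/\m^j)^{r-1}$ and the case split at $r=1$ are correct, but they essentially reprove that final sentence of Proposition~\ref{wedgemap} by hand.
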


\begin{proof}
Suppose that (i) holds.
Since $\Il = 0$ by \ref{h.7}, Lemma \ref{isofunct}(ii) shows that $\Hf(K_\l,T)$ is free of rank one over $R$.
Since $\n$ is a core vertex, $\HFfn(K,T)$ is a free $R$-module of rank $r$.  In particular 
$\HFfn(K,T)[\m] = \m^{k-1}\HFfn(K,T)$, and it follows that the localization map 
$\loc_\l : \HFfn(K,T) \to \Hf(K_\l,T)$ is surjective.  By Proposition \ref{wedgemap}, 
it follows that $\pne$ is an isomorphism. 

Further, since $\loc_\l : \HFfn(K,T) \to \Hf(K_\l,T)$ is surjective, and $\Ht(K_\l,T)$ is 
free of rank one over $R$, and $\HFfns(K,T^*) = 0$, 
\cite[Lemma 4.1.6]{kolysys} shows that $\n\l$ is a core vertex and 
$\loc_\l : \HFfnl(K,T) \to \Ht(K_\l,T)$ is surjective.  Now Proposition \ref{wedgemap}
shows that that $\pnle$ is an isomorphism.  Thus (ii) holds.

Conversely, if $\pne$ is an isomorphism then Proposition \ref{wedgemap} shows that the map
$\loc_\l : \HFfn(K,T) \to \Hf(K_\l,T)$ is surjective, and since $\Hf(K_\l,T)$ is free of rank 
one over $R$ it follows that $\loc_\l$ is not identically zero on $\HFfn(K,T)[\m]$.  Thus 
(ii) implies (i).
\end{proof}

Recall that $\Tb := T/\m T$.

\begin{prop}
\label{9.4}
Suppose $\n \in \NN$ and $\mesh(\n,\Tb^*) > 0$.  Then there is a $\l \in \PP$ prime to $\n$ 
such that $\mesh(\n\l,\Tb^*) < \mesh(\n,\Tb^*)$ and 
$\pne : \HH'(\n) \to \HH'(e)$ is an isomorphism, where $e$ is the edge joining $\n$ 
and $\n\l$. 
\end{prop}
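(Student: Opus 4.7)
The plan is to produce $\l$ via a simultaneous Cebotarev argument that enforces both (i) an arithmetic condition causing $\mesh(\n\l, \Tb^*) < \mesh(\n, \Tb^*)$ and (ii) a sheaf-theoretic condition making $\pne$ an isomorphism on the stub sheaf.

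First I would use $\mesh(\n,\Tb^*) > 0$ to pick a nonzero class $\bar c^* \in \HFfns(K, \Tb^*)$, which embeds in $\HFfns(K, T^*)[\m]$ by Proposition \ref{stprops}(ii). Hypotheses \ref{h.1} and \ref{h.3} let me view $\bar c^*$ as a nonzero homomorphism on $G_{\cH_M(T)}$, so the Cebotarev argument of \cite[Proposition 3.6.1]{kolysys} supplies infinitely many primes $\l \in \PP(L,\tau)$, prime to $\n$, with $\loc_\l(\bar c^*) \ne 0$ in $\Hf(K_\l,\Tb^*)$. For any such $\l$, the exact sequence
\[
0 \too \HFfnls(K, \Tb^*) \too \HFfns(K, \Tb^*) \too \Hf(K_\l, \Tb^*),
\]
combined with the identification \cite[Theorem 4.1.7(ii)]{kolysys}, gives $\mesh(\n\l, \Tb^*) = \mesh(\n, \Tb^*) - 1$, establishing (i).

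For (ii), by Lemma \ref{9.1} and Definition \ref{stubsheaf} both $\HH'(\n)$ and $\HH'(e)$ are cyclic and $\pne$ is surjective by construction, so only injectivity remains. Writing $\HH'(\n) = \m^{\mesh(\n)}(\wedge^r \HFfn(K,T)) \otimes \Gn$ and using Proposition \ref{wedgemap}, I would reduce injectivity to nonvanishing of the residual localization map $\loc_\l^\f \colon \HFfn(K, \Tb) \to \Hf(K_\l, \Tb)$ on a suitably chosen class. Since $\HFfn(K, \Tb)$ has $\k$-dimension $r + \mesh(\n, \Tb^*) > r$ by Proposition \ref{stprops}(iii), such a class exists, and a second Cebotarev constraint imposes the desired nonvanishing. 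The two Cebotarev conditions can be realized at the same $\l$ because hypothesis \ref{h.4} guarantees linear disjointness of the Galois extensions cut out by $\bar c^* \in H^1(K, \Tb^*)$ and the auxiliary class in $H^1(K, \Tb)$, allowing a single Frobenius conjugacy class in the compositum to satisfy both.

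The main obstacle is step (ii): cleanly reducing injectivity of the exterior-algebra map $\widehat{\loc_\l^\f}$ on the cyclic submodule $\m^{\mesh(\n)}\wedge^r\HFfn(K, T)$ to a rank-one condition on the residual Selmer group. One must carefully track the $\m$-filtration through Proposition \ref{wedgemap}, match the lengths of $\HH'(\n)$ and $\HH'(e)$ via Corollary \ref{threefive}(iii), and verify that the criterion of Lemma \ref{9.2} extends from the core-vertex case to the general stub stalk.
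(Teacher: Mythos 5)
Your overall strategy --- a simultaneous Cebotarev choice of $\l$, global duality for the drop in $\mesh(\cdot,\Tb^*)$, and an exterior-algebra/length argument for the isomorphism --- is the same as the paper's, and step (i) is essentially the paper's argument. The gap is in step (ii), in two respects. First, the Cebotarev condition you impose on the $T$-side is too weak: you ask that $\loc_\l$ be nonzero on ``a suitably chosen class'' of $\HFfn(K,\Tb) \cong \HFfn(K,T)[\m]$, justified by the dimension count $r+\bar\mesh(\n) > r$. What is needed is that $\loc_\l$ be nonzero on the smaller submodule $\m^{k-1}\HFfn(K,T)$ (which is nonzero precisely because $\cm(T)=r>0$, by Corollary \ref{threefive}); only this forces the localization $\HFfn(K,T)\to\Hf(K_\l,T)$ to be surjective as a map of $R$-modules. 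A class of $\HFfn(K,T)$ of order $\m$ that is not divisible by $\m^{k-1}$ localizes into $\m^{k-1}\Hf(K_\l,T)$, so its nonvanishing gives no surjectivity over $R$, and then neither the description of $\HH'(e)$ nor the length comparison goes through, and $\pne|_{\HH'(\n)}$ can fail to be injective.

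Second, you leave the reduction of injectivity to this localization condition as ``the main obstacle,'' but that reduction is the actual content of the proof. The paper's argument is short once the right condition is imposed: surjectivity of $\loc_\l$ over $R$ makes $\widehat{\loc_\l^\f}$ surjective on $\wedge^r\HFfn(K,T)$ by Proposition \ref{wedgemap}, hence
$$
\HH'(e) = \m^{\mesh(\n)}\,\Ht(K_\l,T)\otimes(\wedge^{r-1}\HS{\FF_\l(\n)}(K,T))\otimes\Gnl,
$$
which has length at least $k-\mesh(\n) = \length_R(\HH'(\n))$ by Corollary \ref{threefive}(iii); since $\pne:\HH'(\n)\to\HH'(e)$ is surjective by the very definition of $\HH'(e)$, it is an isomorphism. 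No separate injectivity analysis of the exterior-power map is needed. A minor further point on step (i): the sequence $0\to\HFfnls(K,\Tb^*)\to\HFfns(K,\Tb^*)\to\Hf(K_\l,\Tb^*)$ is not exact as written, since the transverse-at-$\l$ Selmer group is not contained in the finite-at-$\l$ one; the kernel of localization is the strict-at-$\l$ group, and identifying it with $\HFfnls(K,\Tb^*)$ is itself a global-duality statement that uses the $T$-side localization condition --- the paper simply cites Poitou--Tate duality for the inequality $\bar\mesh(\n\l)<\bar\mesh(\n)$.
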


Let $\bar\mesh(\n) := \dim_\k\HFfns(K,\Tb^*)$. By Proposition \ref{stprops}(ii), 
we have $\mesh(\n) = 0$ if and only if $\bar\mesh(\n) = 0$.

\begin{proof}
By \cite[Proposition 3.6.1]{kolysys} we can use the Cebotarev theorem to 
choose a prime $\l \in \PP$ such that the localization maps 
$$
\m^{k-1}\HS{\FFf{\n}}(K,T) \to \Hf(K_\l,T), \quad \HS{\FFf{\n}^*}(K,T^*)[\m] \to \Hf(K_\l,T^*)
$$
are both nonzero.  
(Note that $\m^{k-1}\HFfn(K,T) \ne 0$ by Corollary \ref{threefive}(iii).)  
Then by Poitou-Tate global duality (see for example \cite[Lemma 4.1.7(iv)]{kolysys}), 
we have $\bar\mesh(\n\l) < \bar\mesh(\n)$.
Further, we have that 
localization $\HS{\FFf{\n}}(K,T) \to \Hf(K_\l,T)$ is surjective, 
so by Proposition \ref{wedgemap}(ii)
$$
\widehat{\loc_\l} : \wedge^r \HS{\FFf{\n}}(K,T) 
   \too \Hf(K_\l,T) \otimes (\wedge^{r-1} \HS{\FF_\l(\n)}(K,T))
$$ 
is surjective as well.  Since $\HH'(e)$ is defined to be the image of 
$$
\HH'(\n) := \m^{\mesh(\n)}(\wedge^r \HS{\FFf{\n}}(K,T)) \otimes \Gn
$$
under the upper maps of \eqref{compdiag}, we deduce that 
$$
\HH'(e) = \m^{\mesh(\n)} \Ht(K_\l,T) \otimes (\wedge^{r-1} \HS{\FF_\l(\n)}(K,T)) \otimes \Gnl.
$$
Thus 
$$
\length_R(\HH'(e)) \ge k - \mesh(\n) = \length_R(\HH'(\n)),
$$ 
the equality by Corollary \ref{threefive}(iii).  Since the map $\HH'(\n) \to \HH'(e)$ 
is surjective by definition, it must be an isomorphism.
\end{proof}

\begin{thm}
\label{14.4}
Suppose $\n,\n'$ are core vertices.  Then there is a path
$$
\xymatrix{
\n=\n_0 \ar@{-}^-{e_1}[r] & \n_1 \ar@{-}^-{e_2}[r] & \cdots \ar@{-}^-{e_t}[r] & \n_t=\n'
}
$$
in $\X$ such that every $\n_i$ is a core vertex and 
all of the maps $\psi_{\n_i}^{e_{i+1}}$ and $\psi_{\n_i}^{e_{i}}$ 
are isomorphisms.
\end{thm}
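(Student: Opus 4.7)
The plan is to show that any two core vertices lie in the same connected component of the subgraph of $\X$ whose vertices are core vertices and whose edges correspond to single-prime moves $\n \leftrightarrow \n\l$ for which both edge maps of $\HH$ are isomorphisms. By Lemma \ref{9.2}, such a move from a core vertex $\n$ to $\n\l$ is allowed precisely when $\loc_\l\colon \HFfn(K,T)[\m]\to\Hf(K_\l,T)$ is nonzero. The theorem follows immediately from this connectivity together with Lemma \ref{9.2}.

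The first step is to establish, via Cebotarev, that from any core vertex $\n$ there is a positive density of primes $\l \in \PP$ coprime to $\n$ such that $\n\l$ is again a core vertex with isomorphism edge maps. Since $r = \cm(T) > 0$, Corollary \ref{threefive}(iii) gives $\HFfn(K,T)[\m]\ne 0$; picking any nonzero $c$ in this module, \cite[Proposition 3.6.1]{kolysys} produces $\l$ with $\loc_\l(c)\ne 0$, and Lemma \ref{9.2} converts this into the desired move. The same Cebotarev argument allows simultaneous control of finitely many such nonvanishing conditions.

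Given two core vertices $\n$ and $\n'$, I would then use this simultaneous flexibility to choose primes $\l_1,\ldots,\l_s$ coprime to $\n\n'$ so that both sequences $\n \to \n\l_1 \to \cdots \to \mm := \n\l_1\cdots\l_s$ and $\n' \to \n'\l_1 \to \cdots \to \mm' := \n'\l_1\cdots\l_s$ are paths of core vertices with isomorphism edge maps. It then remains to connect $\mm$ and $\mm'$, which differ precisely by the symmetric difference of prime factors of $\n$ and $\n'$; combined with the two upward paths this would yield the required path $\n \to \mm \to \mm' \to \n'$.

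The main obstacle lies in this final connection step: removing a prescribed prime $\l$ from $\mm$ while staying in the core-vertex world requires $\loc_\l$ to be nonzero on $\HS{\FFf{\mm/\l}}(K,T)[\m]$, which is a structural constraint on a specific Selmer group and is not automatically supplied by freely chosen auxiliary primes. My expectation is that the resolution requires choosing the $\l_i$ with finer control in the previous step---imposing additional nonvanishing conditions on dual Selmer classes at each intermediate $\mm/\l$ so as to prepare for the later swap moves---or alternatively to trade such a downward move against two further upward moves that first enlarge $\mm$ to a more generic core vertex. This delicate combinatorial argument, analogous to the one used in the core rank one setting (cf.\ \cite[\S 4.3]{kolysys}), is where I expect the main technical work of the proof.
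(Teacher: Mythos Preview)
Your outline correctly identifies the basic mechanism (Lemma \ref{9.2} plus Cebotarev) but, as you yourself note, leaves the essential step unproved: once you have climbed from $\n$ to $\mm$ and from $\n'$ to $\mm'$, you must \emph{remove} the specific primes in which $\n$ and $\n'$ differ, and the nonvanishing condition in Lemma \ref{9.2} is a constraint on a fixed Selmer group at a fixed prime, not something a Cebotarev choice can arrange after the fact. So as it stands this is not a proof but a sketch with an acknowledged gap at the crucial point.

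The paper does \emph{not} resolve this by the direct combinatorial argument you anticipate. Instead it proceeds by induction on the core rank $r$, using the case $r=1$ from \cite[Theorem 4.3.12]{kolysys} as the base. Given core vertices $\n,\n'$ for $(T,\FF)$, one chooses nonzero classes $c\in\HS{\bar\FF(\n)}(K,\Tb)$ and $c'\in\HS{\bar\FF(\n')}(K,\Tb)$ and, by Cebotarev, a single prime $\l\in\PP$ prime to $\n\n'$ with $\loc_\l(c)\ne 0$ and $\loc_\l(c')\ne 0$. One then passes to the modified Selmer structure $\bar\FF_\l$ on $\Tb$ (strict condition at $\l$), which has core rank $r-1$, and checks that $\n$ and $\n'$ are still core vertices for $(\Tb,\bar\FF_\l)$. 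The induction hypothesis supplies a path of core vertices for this auxiliary structure, and a short argument using Lemma \ref{9.2} shows that every vertex on that path is automatically a core vertex for $(T,\FF)$ with isomorphism edge maps. The point is that the ``swap'' combinatorics you were worried about are absorbed entirely into the induction hypothesis; no explicit prime-removal step is ever needed at level $r$.
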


\begin{proof}
When $\cm(T)=1$, this is \cite[Theorem 4.3.12]{kolysys}.  The general case can be proved 
in the same way, but instead we will prove it here by induction on $r := \cm(T)$.

Denote by $\bar\FF$ the induced Selmer structure on $\Tb$.  
By Proposition \ref{stprops} and the definition of core vertices we see that 
the Selmer sheaves $\HH_{(T,\FF,\PP)}$ and $\HH_{(\Tb,\bar\FF,\PP)}$ 
have the same core vertices and the same core rank $r$ (see also \cite[Theorem 4.1.3]{kolysys}).
  
Since $r > 0$, we can fix nonzero classes $c \in \HFfn(K,\Tb)$ and $c' \in \HS{\FF(\n')}(K,\Tb)$.  
By \cite[Proposition 3.6.1]{kolysys}, we can use the Cebotarev theorem to 
choose $\l \in \PP$, not dividing $\n\n'$, such that 
the localizations $c_\l$ and $c'_\l$ are both nonzero.

Note that the Selmer triple $(\Tb,\bar\FF_\l,\PP-\{\l\})$ also 
satisfies hypotheses \ref{h.1} through \ref{h.6} 
(the only one of those conditions that depends on the Selmer structure is \ref{h.5}, and \ref{h.5} 
is vacuous when we work over $R/\m$).  By our choice of $\l$, both localization maps 
$$
\loc_\l : \HS{\bar\FF(\n)}(K,\Tb) \to \Hf(K_\l,\Tb), \qquad \loc_\l : \HS{\bar\FF(\n')}(K,\Tb) \to \Hf(K_\l,\Tb)
$$
are nonzero, and $\Hf(K_\l,\Tb)$ is a one-dimensional $R/\m$-vector space, so both maps are 
surjective.  Since $\n$ and $\n'$ are core vertices for $(\Tb,\bar\FF)$, it follows that 
$$
\dim_{R/\m}\HS{\bar\FF_\l(\n)}(K,\Tb) = \dim_{R/\m}\HS{\bar\FF_\l(\n')}(K,\Tb) = r-1
$$
and (by Poitou-Tate global duality, see for example \cite[Theorem 2.3.4]{kolysys}) 
that 
$\HS{\bar\FF_\l(\n)^*}(K,\Tb^*) = \HS{\bar\FF_\l(\n')^*}(K,\Tb^*) = 0.$

In particular we deduce that $\cm(\Tb,\bar\FF_\l) = r-1$, and that $\n, \n'$ are core vertices 
for the sheaf $\HH_{\Tb,\bar\FF_\l}$.  By our induction hypotheses we conclude that there is a 
path $\n=\n_0,\n_1,\ldots,\n_t=\n'$ from $\n$ to $\n'$ in $\X$ such that every $\n_i$ is 
prime to $\l$, every $\n_i$ is a core vertex for $\HH_{\Tb,\bar\FF_\l}$, 
and every vertex-to-edge map (for $\HH_{\Tb,\bar\FF_\l}$) along the path is an isomorphism. 
We will show that every $\n_i$ is a core 
vertex for $\HH_{T,\FF}$, and every vertex-to-edge map (for $\HH_{T,\FF}$) 
along the path is an isomorphism.  This will prove the theorem.

Fix $i$, $0 \le i \le t$.  The exact sequence
$$
0 \too \HS{\bar\FF_\l(\n_i)}(K,\Tb) \too \HS{\bar\FF(\n_i)}(K,\Tb) \map{\loc_\l} 
   \Hf(K_\l,\Tb) 
$$
shows that $\dim_{R/\m}\HS{\bar\FF(\n_i)}(K,\Tb) \le r$.  Then Corollary \ref{threefive}(i) 
(applied to $\Tb$, $\bar\FF$, and $R/\m$) shows that $\n_i$ is a core vertex of $\HH_{\Tb,\bar\FF}$, 
and hence is a core vertex of $\HH_{T,\FF}$.

Further, suppose $\el$ is a prime such that $\n_{i \pm 1} = \n_i\el$, and let $e$ be the edge joining 
those two vertices.  By assumption, the maps $\HH_{\Tb,\bar\FF_\l}(\n_i) \to \HH_{\Tb,\bar\FF_\l}(e)$ 
and $\HH_{\Tb,\bar\FF_\l}(\n_i\el) \to \HH_{\Tb,\bar\FF_\l}(e)$
are isomorphisms, so by Lemma \ref{9.2} the localization map 
$\HS{\bar\FF_\l(\n_i)}(K,\Tb) \to \Hf(K_\el,\Tb)$ is nonzero.  But 
$$
\HS{\bar\FF_\l(\n_i)}(K,\Tb) \subset \HS{\bar\FF(\n_i)}(K,\Tb) = \HS{\FF(\n_i)}(K,T)[\m],
$$
so $\loc_\el : \HS{\FF(\n_i)}(K,T)[\m] \to \Hf(K_\el,T)$ is nonzero, so by Lemma \ref{9.2} 
both of the maps $\psi_{n_i}^e$ and $\psi_{n_{i\pm 1}}^e$ are isomorphisms.
This completes the proof.
\end{proof}

\begin{cor}
\label{9.5}
There are core vertices.  More precisely:
\begin{enumerate}
\item
for every $\n\in\NN$ there is an $\n'\in\NN$ prime to $\n$, with $\nu(\n') = \bar\mesh(\n)$, 
such that $\n\n'$ is a core vertex,
\item
$\min\{\nu(\n) : \text{$\n$ is a core vertex}\} = \dim_{R/\m}\HFs(K,T^*)[\m].$
\end{enumerate}
\end{cor}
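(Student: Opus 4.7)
The plan is to prove both parts of Corollary \ref{9.5} by induction on the integer $\bar\mesh(\n) = \dim_\k\HFfns(K,\Tb^*)$, using Proposition \ref{9.4} as the key input together with an elementary ``at most one'' estimate for how $\bar\mesh$ changes when one prime is added.

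First I would record the dimension estimate. Fix $\n\in\NN$ and a prime $\l\nmid\n$ in $\PP$. The Selmer groups $\HFfns(K,\Tb^*)$ and $\HFfnls(K,\Tb^*)$ have the same local conditions away from $\l$; at $\l$ they impose the finite and transverse conditions respectively. Since Lemma \ref{isofunct}(i) gives $\Hf(K_\l,\Tb^*)\cap\Ht(K_\l,\Tb^*)=0$, their intersection $U$ consists of the classes satisfying the same global conditions and localizing to $0$ at $\l$, whence
$$\dim_\k\HFfns(K,\Tb^*) = \dim_\k U + \dim_\k\loc_\l(\HFfns(K,\Tb^*)),$$
$$\dim_\k\HFfnls(K,\Tb^*) = \dim_\k U + \dim_\k\loc_\l(\HFfnls(K,\Tb^*)).$$
Each image term has $\k$-dimension at most $1$, because $\Hf(K_\l,\Tb^*)$ and $\Ht(K_\l,\Tb^*)$ are one-dimensional over $\k$ (Lemma \ref{applemma} applied to $\Tb^*$). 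Consequently $\bar\mesh(\n\l)\ge\bar\mesh(\n)-1$ for every choice of $\l$.

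For part (i), I argue by induction on $\bar\mesh(\n)$. When $\bar\mesh(\n)=0$, Proposition \ref{stprops}(ii) identifies $\HFfns(K,\Tb^*)$ with $\HFfns(K,T^*)[\m]$, which therefore vanishes, so $\HFfns(K,T^*)=0$ and $\n$ is already a core vertex; take $\n'=1$. For $\bar\mesh(\n)>0$, Proposition \ref{9.4} supplies a prime $\l\nmid\n$ in $\PP$ with $\bar\mesh(\n\l)<\bar\mesh(\n)$; combined with the estimate above, $\bar\mesh(\n\l)=\bar\mesh(\n)-1$. The inductive hypothesis applied to $\n\l$ produces $\n''$ prime to $\n\l$ with $\nu(\n'')=\bar\mesh(\n\l)=\bar\mesh(\n)-1$ and $\n\l\n''$ a core vertex, and $\n':=\l\n''$ then has the desired properties.

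Part (ii) drops out immediately. Applying (i) with $\n=1$ exhibits a core vertex with $\nu$-value equal to $\bar\mesh(1)$, giving the upper bound on the minimum. Conversely, iterating the ``at most one'' estimate along any chain $1,\l_1,\l_1\l_2,\ldots,\l_1\cdots\l_t$ of vertices ending at a core vertex forces $t\ge\bar\mesh(1)$, giving the matching lower bound. Finally Proposition \ref{stprops}(ii) with $\n=1$ and $i=1$ identifies $\bar\mesh(1)=\dim_\k\HFs(K,\Tb^*)=\dim_{R/\m}\HFs(K,T^*)[\m]$. The only substantive input is Proposition \ref{9.4}; everything else is elementary dimension counting, and I expect no genuine obstacle beyond the verification that the finite and transverse local subspaces meet trivially, which is exactly Lemma \ref{isofunct}(i).
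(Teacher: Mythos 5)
Your proposal is correct and follows the same skeleton as the paper's proof: iterate Proposition \ref{9.4} to drive $\bar\mesh$ down to zero, and use the bound $\bar\mesh(\n\l)\ge\bar\mesh(\n)-1$ to show each step drops $\bar\mesh$ by exactly one, so the number of primes added equals $\bar\mesh(\n)$. The only substantive difference is that where the paper cites global duality (\cite[Lemma 4.1.7(i)]{kolysys}) for the inequality $\bar\mesh(\n\n')\ge\bar\mesh(\n)-\nu(\n')$, you derive the one-prime case by the elementary rank--nullity comparison of the finite and transverse conditions at $\l$ (both subspaces being one-dimensional for $\Tb^*$ and meeting trivially by Lemma \ref{isofunct}(i)); this is a valid, self-contained substitute, and your base case and the identification $\bar\mesh(1)=\dim_{R/\m}\HFs(K,T^*)[\m]$ via Proposition \ref{stprops}(ii) match the paper exactly.
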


\begin{proof}
Choose $\n \in \NN$.  For every $\n' \in\NN$ prime to $\n$, global duality 
(see for example \cite[Lemma 4.1.7(i)]{kolysys}) shows that
\begin{equation}
\label{nts}
\bar\mesh(\n\n') \ge \bar\mesh(\n)-\nu(\n').
\end{equation}

Applying Proposition \ref{9.4}, we can construct $\n=\n_0, \n_1, \n_2, \ldots \in \NN$ 
inductively, with $\n_{i+1} = \n_i\l_i$ for some prime $\l_i \in \NN$ and 
$\bar\mesh(\n_{i+1}) < \bar\mesh(\n_i)$, 
until we reach $\n_d \in\NN$ with $\bar\mesh(\n_d) = 0$.  Then 
$\HS{\FF(\n_d)^*}(K,T^*)[\m] = \HS{\FF(\n_d)^*}(K,\Tb^*) = 0$, so $\n_d$ is a core vertex.
Setting $\n' := \n_d/\n$ we have 
$$
\nu(\n') = d \le \bar\mesh(\n) = \dim_{R/\m}\HFs(K,T^*)[\m].
$$
By \eqref{nts}, since $\bar\mesh(\n\n') = 0$ we have $\nu(\n') \ge \bar\mesh(\n)$, 
and so $\nu(\n') = \bar\mesh(\n)$.  This proves (i), and applying (i) 
with $\n = 1$ and \eqref{nts} proves (ii).
\end{proof}

\begin{proof}[Proof of Theorem \ref{mthm0}]
Theorem \ref{mthm0}(i) is Corollary \ref{9.5}, and Theorem \ref{mthm0}(ii) is Theorem \ref{14.4}.
Lemma \ref{9.1} says that $\HH'$ is locally cyclic.  
To complete the proof of Theorem \ref{mthm0} we need only show 
that every core vertex is a hub of $\HH'$.

Fix a core vertex $\n_0$, and let $\n\in\NN$ be any other vertex.  We will show 
by induction on $\bar\mesh(\n)$ that there is an $\HH'$-surjective path from $\n_0$ to $\n$.  

If $\bar\mesh(\n) = 0$, then $\n$ is also a core vertex and the desired surjective path 
exists by Theorem \ref{14.4}.

Now suppose $\bar\mesh(\n) > 0$.  Use Proposition \ref{9.4} to find $\l \in \PP$ 
not dividing $\n$ such that $\bar\mesh(\n\l) < \bar\mesh(\n)$ and 
$\pne : \HH'(\n) \to \HH'(e)$ is an isomorphism, where $e$ is the edge joining $\n$ 
and $\n\l$.  By induction there is an $\HH'$-surjective path from $\n_0$ to $\n\l$, and if we 
adjoin to that path the edge $e$, we get an $\HH'$-surjective path from $\n_0$ to $\n$.
\end{proof}

\appendix

\section{Some exterior algebra}
\label{exalg}

Suppose for this appendix that $R$ is a local principal ideal ring 
with maximal ideal $\m$.

\begin{prop}
\label{wedgemap}
Suppose $0 \to N \to M \map{\psi} C$ is an exact sequence of finitely-generated $R$-modules, 
with $C$ free of rank one, and $r \ge 1$.  Then there is a unique map 
$$
\hat\psi : \wedge^r M \too C \otimes \wedge^{r-1}N
$$
such that 
\begin{enumerate}
\item
the composition 
$\wedge^r M \map{\hat\psi} C \otimes \wedge^{r-1}N \to C \otimes \wedge^{r-1}M$
is given by 
$$
m_1\wedge\cdots\wedge m_r \mapsto 
   \sum_{i=1}^r (-1)^{i+1} \psi(m_i) \otimes (m_1\wedge\cdots\wedge m_{i-1}\wedge m_{i+1}\cdots\wedge m_r),
$$
\item
the image of $\hat\psi$ is the image of $\psi(M) \otimes \wedge^{r-1}N \to C \otimes \wedge^{r-1}N$.
\end{enumerate}
If $M$ is free of rank $r$ over $R$, then $\hat\psi$ is an isomorphism if and only if 
$\psi$ is surjective.
\end{prop}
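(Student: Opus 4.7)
My plan is to construct $\hat\psi$ via the interior product formula. The right-hand side of (i) is alternating and multilinear in $(m_1,\ldots,m_r)$, so it determines a unique $R$-linear map $\tilde\psi : \wedge^r M \to C \otimes \wedge^{r-1}M$. The heart of the argument is then to show that $\tilde\psi$ factors uniquely through the canonical map $\iota : C \otimes \wedge^{r-1}N \to C \otimes \wedge^{r-1}M$ induced by $N \hookto M$; the resulting lift is $\hat\psi$, and it satisfies (i) by construction.

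To see that the image of $\tilde\psi$ lies in $\iota(C \otimes \wedge^{r-1}N)$, I would first observe that $M/N$ embeds into the rank-one free module $C$ and so is cyclic. For $r \ge 3$ this forces $\wedge^{r-1}(M/N) = 0$, and for $r = 2$ the composition $\wedge^2 M \to C \otimes M \to C \otimes (M/N) \hookto C \otimes C$ sends $m_1 \wedge m_2$ to $\psi(m_1)\otimes\psi(m_2) - \psi(m_2)\otimes\psi(m_1)$, which vanishes since $C$ has rank one. This places $\tilde\psi$ in the kernel of $C \otimes \wedge^{r-1}M \to C \otimes \wedge^{r-1}(M/N)$. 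To upgrade this to the image of $\iota$ (generally a strictly smaller submodule), I would fix a generator $c$ of the cyclic module $\psi(M)$, write $\psi(m_i) = a_i c$, and compute $\tilde\psi(m_1 \wedge \cdots \wedge m_r) = c \otimes \omega$ where $\omega = \sum_i (-1)^{i+1} a_i (m_1 \wedge \cdots \hat m_i \cdots \wedge m_r)$. For each pair $i \ne j$ the element $a_j m_i - a_i m_j$ lies in $N$, and a Pl\"ucker-type wedge expansion shows that $a_\ell \cdot \omega$ is a genuine wedge of such elements in $\wedge^{r-1}N$ for every $\ell$; assembled via a generating identity for the ideal $\psi(M) \subset C$, these give a canonical lift of $c \otimes \omega$ to $C \otimes \wedge^{r-1}N$. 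Uniqueness of the lift reduces, via flatness of $C$, to injectivity of $\wedge^{r-1}N \to \wedge^{r-1}M$, which holds for finitely generated modules over a local principal ideal ring by invariant-factor analysis.

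Property (ii) then follows from the explicit description: the image of $\hat\psi$ consists precisely of elements $c \otimes \omega'$ with $c \in \psi(M)$ and $\omega' \in \wedge^{r-1}N$, which is the image of $\psi(M) \otimes \wedge^{r-1}N \to C \otimes \wedge^{r-1}N$. For the iff statement, assume $M$ is free of rank $r$. If $\psi$ is surjective then, since $C$ is free of rank one, the sequence splits as $M \cong N \oplus C$ with $N$ free of rank $r - 1$; under the induced identification $\wedge^r M \cong C \otimes \wedge^{r-1}N$, the map $\hat\psi$ is the identity. Conversely, if $\hat\psi$ is an isomorphism then (ii) forces $\psi(M) \otimes \wedge^{r-1}N$ to surject onto $C \otimes \wedge^{r-1}N$, and since $\wedge^{r-1}N \ne 0$ in the nondegenerate situation, Nakayama's lemma gives $\psi(M) = C$. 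The main obstacle is the wedge-product computation of the second paragraph, refining ``image of $\tilde\psi$ lies in $\ker(C \otimes \wedge^{r-1}M \to C \otimes \wedge^{r-1}(M/N))$'' into ``image lies in $\iota(C \otimes \wedge^{r-1}N)$'': these differ because the former kernel equals the (generally strictly larger) image of $C \otimes N \otimes \wedge^{r-2}M$ under wedge multiplication.
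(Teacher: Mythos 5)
Your construction takes a genuinely different route from the paper's (which splits $M = Rm \oplus N_0$, $N = Im \oplus N_0$ globally using the structure theorem over the principal local ring $R$, with $\psi(m)$ generating $\psi(M)$ and $I\psi(M)=0$), but as written it has two real gaps. The first is in the uniqueness step: the map $\wedge^{r-1}N \to \wedge^{r-1}M$ is \emph{not} injective in general for finitely generated modules over a local principal ideal ring. For example, with $R = \Z/p^2\Z$, $C = R$, $M = Re_0 \oplus (R/pR)e_1 \oplus (R/pR)e_2$, $\psi(xe_0 + y_1e_1 + y_2e_2) = px$ and $r = 3$, the kernel $N$ is $pRe_0 \oplus (R/pR)e_1 \oplus (R/pR)e_2$, and $(pe_0)\wedge e_1$ is nonzero in $\wedge^2 N$ (it generates the summand $pRe_0 \otimes (R/pR)e_1$) but equals $p(e_0 \wedge e_1) = 0$ in the summand $Re_0 \otimes (R/pR)e_1 \cong R/pR$ of $\wedge^2 M$. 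So a lift of $\tilde\psi$ through $\iota$ need not be unique on the basis of condition (i) alone; the paper gets uniqueness only by invoking condition (ii), which forces any candidate map to land in a specific submodule of $C \otimes \wedge^{r-1}N$ (namely the image of $\psi(M)\otimes\wedge^{r-1}N$, which coincides with the image of $\psi(M)\otimes\wedge^{r-1}N_0$ and \emph{does} inject into $C\otimes\wedge^{r-1}M$).

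The second gap is in the lifting step itself. The Pl\"ucker expansion of $(a_1m_2 - a_2m_1)\wedge\cdots\wedge(a_1m_r - a_rm_1)$ equals $a_1^{\,r-2}\omega$, not $a_1\omega$, so for $r \ge 4$ you only learn that $a_\ell^{\,r-2}\omega$ comes from $\wedge^{r-1}N$; the coefficients $a_i$ are only well defined modulo $\mathrm{Ann}(c)$ when $\psi(M)$ is not free; and the ``assembly via a generating identity for the ideal $\psi(M)$'' is not yet an argument --- you must still show that the chosen preimages are independent of all these choices and that the monomial-by-monomial recipe respects the relations in $\wedge^r M$. These are precisely the issues the paper's single global splitting disposes of: every monomial can be rewritten with $r-1$ factors in $N_0$, and the only ambiguity in the lift lives in $\psi(M)\otimes Im \otimes \wedge^{r-2}N_0$, which dies because $I\psi(M) = 0$. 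Your first reduction (the image of $\tilde\psi$ lands in $\ker(C\otimes\wedge^{r-1}M \to C\otimes\wedge^{r-1}(M/N))$) and your treatment of the final ``free of rank $r$'' assertion are fine, but the core of the existence and uniqueness claims still needs the paper's splitting or an equivalent device.
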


\begin{proof}
Since $R$ is principal, we can ``diagonalize'' $\psi$ and write $M = Rm \oplus N_0$ and 
$N = Im \oplus N_0$ where $N_0 \subset N$, $m \in M$ is such that $\psi(m)$ generates  
$\psi(M)$, and $I$ is an ideal of $R$.  In particular we have $0 = \psi(N) = I\psi(M)$.

The formula of (i) gives a well-defined $R$-module homomorphism 
$\hat\psi_0 : \wedge^r M \to \psi(M) \otimes \wedge^{r-1}M$.  
Consider the diagram
$$
\xymatrix{
\wedge^r M \ar^-{\hat\psi_0}[r] & \psi(M) \otimes \wedge^{r-1}M \ar[r] &C \otimes \wedge^{r-1}M \\
& \psi(M) \otimes \wedge^{r-1}N \ar^-{\eta_2}[r]\ar_{\eta_1}[u] & C \otimes \wedge^{r-1}N \ar[u]
}
$$
with maps induced by the inclusions $\psi \hookto C$ and $N \hookto M$.  We will show 
that $\image(\hat\psi_0) \subset \image(\eta_1)$ and $\ker(\eta_1) \subset \ker(\eta_2)$.  
Then $\hat\psi := \eta_2 \circ \eta_1^{-1} \circ \hat\psi_0$ is well defined and 
satisfies (i) and (ii).

Since $M = Rm \oplus N_0$, we have that the image $\hat\psi_0(\wedge^r M)$ is generated by monomials 
$\psi(m) \otimes n_1 \wedge \cdots \wedge n_{r-1}$ with $n_i \in N_0$, so $\image(\hat\psi_0) \subset \image(\eta_1)$.

We also have 
\begin{align*}
\wedge^{r-1}N &= (Im \otimes \wedge^{r-2}N_0) \oplus \wedge^{r-1}N_0, \\
\wedge^{r-1}M &= (Rm \otimes \wedge^{r-2}N_0) \oplus \wedge^{r-1}N_0.
\end{align*}
Therefore, since $I\psi(M) = 0$,
\begin{multline*}
\ker(\eta_1) = \ker(\psi(M) \otimes Im \otimes \wedge^{r-2}N_0 \to \psi(M) \otimes Rm \otimes \wedge^{r-2}N_0) \\
   \hskip2.75in= \psi(M) \otimes Im \otimes \wedge^{r-2}N_0.
\end{multline*}
We further have
\begin{equation}
\label{(a.1)}
\eta_2(\psi(M) \otimes Im \otimes \wedge^{r-2}N_0) = 0.
\end{equation}
Thus $\ker(\eta_1) \subset \ker(\eta_2)$, so $\hat{\psi}$ is well-defined and 
has properties (i) and (ii).  
Uniqueness follows from the fact that by \eqref{(a.1)}
$$
\eta_2(\psi(M) \otimes \wedge^{r-1}N) = \eta_2(\psi(M) \otimes \wedge^{r-1}N_0)
$$
injects into $C \otimes \wedge^{r-1}M$.

The final assertion follows easily from the definition of $\hat\psi$ above.
\end{proof}

If $M$ is an $R$-module, let $\dual{M} := \Hom(M,R)$.

\begin{prop}
\label{bhprop}
Suppose $R$ is artinian and there is a cartesian diagram of $R$-modules
$$
\xymatrix@R=15pt{
M_1 \ar@{^(->}[r]\ar[d] & M_2 \ar^{h}[d] \\
C_1 \ar@{^(->}[r] & C_2
}
$$
where $C_1$ and $C_2$ are free $R$-modules of finite rank, 
and the horizontal maps are injective.
\begin{enumerate}
\item
Suppose $r \ge 0$ and  $s_i = \rank_R(C_i)$.
There is a canonical $R$-module homomorphism 
$$
\wedge^{r+s_2}M_2 \otimes \wedge^{s_2} \dual{C}_2 \too 
   \wedge^{r+s_1}M_1 \otimes \wedge^{s_1} \dual{C}_1
$$
defined as follows.  If $m \in \wedge^{r+s_2}M_2$, $\psi_1,\ldots, \psi_{s_2}$ 
is a basis of $\dual{C}_2$ such that $\psi_{s_1+1},\ldots, \psi_{s_2}$ is a basis of 
$\dual{(C_2/C_1)}$, and $h_i = \psi_i \circ h$, then
$$
m \otimes (\psi_{1}\wedge\cdots\wedge\psi_{s_2}) \mapsto
   (\hat{h}_{s_1+1} \circ \cdots \circ \hat{h}_{s_2})(m)
      \otimes (\psi_{1}\wedge\cdots\wedge\psi_{s_1})
$$
with $\hat{h}_i$ as in Proposition \ref{wedgemap}.
This is independent of the choice of the $\psi_i$.
\item
If $M_2$ is free of rank $r+s_2$ over $R$, then the image of the map of (i) 
is $$\m^{\length(M_1) - (r+s_1)\length(R)}\wedge^{r+s_1}M_1 \otimes \wedge^{s_1} \dual{C}_1.$$
\item
If 
$$
\xymatrix@R=15pt{
M_2 \ar@{^(->}[r]\ar[d] & M_3 \ar[d] \\
C_2 \ar@{^(->}[r] & C_3
}
$$
is another such cartesian square, then the triangle
$$
\xymatrix@C=8pt{
\wedge^{r+s_3}M_3 \otimes \wedge^{s_3} \dual{C}_3 
   \ar[rr]\ar[dr] && \wedge^{r+s_1}M_1 \otimes \wedge^{s_1} \dual{C}_1 \\
   & \wedge^{r+s_2}M_2 \otimes \wedge^{s_2} \dual{C}_2 \ar[ur]
}
$$
induced by the maps of (i) commutes.
\item
Suppose there is an exact sequence $0 \to M_1 \to M_2 \to C$, where $C$ is free  
of rank $s$ over $R$.  
Then for every $r \ge 0$, the map of (i) (with $C_1 = 0$ and $C_2 = C$) is a canonical map 
$\wedge^{r+s}M_2 \otimes \wedge^s\dual{C} \to \wedge^r M_1$.
\end{enumerate}
\end{prop}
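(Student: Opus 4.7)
The plan is to handle the four parts in order, with most substantive work going into (i) and (ii).

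For (i), I would iteratively apply Proposition \ref{wedgemap}. Given the chosen basis $\psi_1,\ldots,\psi_{s_2}$ of $\dual{C_2}$, set $h_j = \psi_j \circ h : M_2 \to R$ and define
$$
\Phi := \hat h_{s_1+1} \circ \hat h_{s_1+2} \circ \cdots \circ \hat h_{s_2} : \wedge^{r+s_2}M_2 \too \wedge^{r+s_1}\Bigl(\bigcap_{j>s_1}\ker h_j\Bigr).
$$
Since $\psi_{s_1+1},\ldots,\psi_{s_2}$ is a basis of $\dual{(C_2/C_1)}$, the intersection equals $h^{-1}(C_1)$, which the cartesian condition identifies with $M_1$. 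The main issue is independence of the choice of basis. I would establish that $\Phi$ is multilinear and alternating in $(h_{s_1+1},\ldots,h_{s_2})$: multilinearity is visible from the explicit formula in Proposition \ref{wedgemap}(i), and the alternating property reduces to $\hat h_j \circ \hat h_j = 0$, which follows because $h_j$ vanishes identically on $\ker h_j$. Consequently a change of basis of $\dual{(C_2/C_1)}$ by $A \in \GL_{s_2-s_1}(R)$ multiplies $\Phi$ by $\det(A)$, canceling the change of $\psi_1\wedge\cdots\wedge\psi_{s_2}$ on the other factor. Changes in $\psi_1,\ldots,\psi_{s_1}$ do not affect $\Phi$ (which depends only on $h_{s_1+1},\ldots,h_{s_2}$) and produce compensating changes in $\psi_1\wedge\cdots\wedge\psi_{s_1}$ and $\psi_1\wedge\cdots\wedge\psi_{s_2}$.

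For (ii), I would reduce to an explicit calculation via Smith normal form. Since $R$ is principal and both $M_2$ and $C_2/C_1$ are free, choose bases $e_1,\ldots,e_{r+s_2}$ of $M_2$ and $\bar c_1,\ldots,\bar c_{s_2-s_1}$ of $C_2/C_1$ making the induced map $\bar h : M_2 \to C_2/C_1$ diagonal with entries $d_1,\ldots,d_{s_2-s_1}$, take $\psi_{s_1+i}$ dual to $\bar c_i$, and iterate the Koszul-type formula of Proposition \ref{wedgemap}(i). The top-degree generator $e_1\wedge\cdots\wedge e_{r+s_2}$ maps to $\pm (d_1\cdots d_{s_2-s_1})\, e_{s_2-s_1+1}\wedge\cdots\wedge e_{r+s_2}$, tensored with $\psi_1\wedge\cdots\wedge\psi_{s_1}$. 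A direct length count on the Smith normal form of $\bar h$ gives $v(d_1\cdots d_{s_2-s_1}) = \ell := \length(M_1)-(r+s_1)\length(R)$, so $(d_1\cdots d_{s_2-s_1})R = \m^\ell$. Finally $e_{s_2-s_1+1}\wedge\cdots\wedge e_{r+s_2}$ generates a free rank-one summand of $\wedge^{r+s_1}M_1$, and $\m^\ell$ annihilates every torsion summand of $\wedge^{r+s_1}M_1$ (since each has exponent at most $\max_i v(d_i) \le \ell$), so $\m^\ell \wedge^{r+s_1}M_1 \otimes \wedge^{s_1}\dual{C_1}$ coincides with the cyclic submodule produced by our map.

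Part (iii) follows by choosing a basis of $\dual{C_3}$ extending compatible bases of $\dual{C_2}$ and $\dual{C_1}$: both routes around the triangle then compute the same iterated composition $\hat h_{s_1+1}\circ\cdots\circ\hat h_{s_3}$, and the wedge factors on the dual side match by construction. Part (iv) is the case $C_1=0$, $s_1=0$ of (i) and needs no separate argument. The main obstacle is the alternating property used in (i): once one has both the identity $\hat h_j\circ\hat h_j = 0$ and the anticommutation $\hat h_i\circ\hat h_j = -\hat h_j\circ\hat h_i$ derived from the Koszul formula (which requires careful sign bookkeeping at the level of individual monomials), the determinant argument for basis-invariance is immediate and the remaining parts reduce to the direct Smith-normal-form computation in (ii), avoiding any appeal to Fitting-ideal machinery.
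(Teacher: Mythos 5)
Your proof is correct and follows essentially the same route as the paper's: iterated application of Proposition \ref{wedgemap} together with the identity $\bigcap_{j>s_1}\ker h_j = h^{-1}(C_1) = M_1$ for (i), a diagonalization/determinant computation for (ii), compatible choices of dual bases for (iii), and (iv) as a special case of (i). The only cosmetic difference is in (ii), where you put $\bar h\colon M_2\to C_2/C_1$ in Smith normal form directly, while the paper instead chooses a basis of $\dual{M_2}$ adapted to the $h_j$, extracts $\det(A)$, and uses the splitting $M_1\cong N\oplus M_1/N$ with $N$ free; these are dual formulations of the same argument (and your explicit alternating/multilinearity check in (i) fills in what the paper dismisses as ``straightforward'').
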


\begin{proof}
Since the square is cartesian, and by our choice of the $\psi_i$, we have 
\begin{equation}
\label{(14)}
\ker(\oplus_{i > s_1} h_i) = h^{-1}(C_1) = M_1.
\end{equation}
Applying Proposition \ref{wedgemap} repeatedly shows that the map defined in (i) takes values in 
$\wedge^{r+s_1}M_1 \otimes \wedge^{s_1} \dual{C}_1$.
It is straightforward to check that this map is independent of the choice of 
the $\psi_i$.  This proves (i), and (iv) is just a special case of (i).

Suppose now that $M_2$ is free of rank $r+s_2$, and let $s := s_2-s_1$.
 Choose an $R$-basis $\eta_1,\ldots,\eta_{r+s_2}$ of $\dual{M_2}$ such that 
the span of $\eta_{1},\ldots,\eta_{s}$ contains $h_{s_1+1},\ldots,h_{s_2}$,
i.e., there is an $s \times s$ matrix $A = [a_{ij}]$ with $a_{ij} \in R$ such that 
$h_{s_1+j} = \sum_{i}a_{ij}\eta_i$.  
Let $N := \cap_{i=1}^{s} \ker(\eta_i)$.
Then $N$ is free over $R$ of rank $r+s_1$,  
and we have a split exact sequence of free modules
$$
0 \too N \too M_2 \map{\oplus_{i\le s} \eta_i} R^{s} \too 0.
$$
It follows that the composition 
$\hat\eta_{1}\circ\cdots\circ\hat\eta_{s} : \wedge^{r+s_2}M_2 \to \wedge^{r+s_1}N$
of maps given by Proposition \ref{wedgemap} is an isomorphism.

We also have 
$$
\hat{h}_{s_1+1}  \cdots \circ \hat{h}_{s_2}
  = \det(A) \; \hat{\eta}_1 \circ \cdots \circ \hat{\eta}_s,
$$
and $N \subset M_1$ by \eqref{(14)}.  Since $N$ is free, 
there is a noncanonical splitting 
$$
M_1 \cong N \oplus M_1/N,
$$
so the map
$$
\m^{\length(M_1/N)}\wedge^{r+s_1}N \too \m^{\length(M_1/N)}\wedge^{r+s_1}M_1
$$
induced by the inclusion $N \hookto M_1$ is surjective.  Finally,
$$
\det(A)R = \m^{\length(M_1/N)} = \m^{\length(M_1)-(r+s_1)\length(R)},
$$
and combining these facts proves (ii).

Assertion (iii) follows from the independence of the choice of the $\psi_i$.
Choose a basis  $\psi_1,\ldots, \psi_{s_s}$ of $\dual{C}_3$ such that 
$\psi_{s_1+1},\ldots, \psi_{s_3}$ is a basis of $\dual{(C_3/C_1)}$ and 
$\psi_{s_2+1},\ldots, \psi_{s_3}$ is a basis of $\dual{(C_3/C_2)}$.  Then 
$\psi_{s_1+1}|_{C_2},\ldots, \psi_{s_2}|_{C_2}$ is a basis of $\dual{(C_2/C_1)}$, 
and (iii) just reduces to the statement that
$$
(\hat{\psi}_{s_1+1} \circ \cdots \circ \hat{\psi}_{s_2}) 
   \circ (\hat{\psi}_{s_2+1} \circ \cdots \circ \hat{\psi}_{s_3})
   = (\hat{\psi}_{s_1+1} \circ \cdots \circ \hat{\psi}_{s_3}).
$$
\end{proof}

\medskip
\subsection*{\em Erratum to \cite{kolysys}}
We thank Cl\'ement Gomez for pointing out an error in the statement of 
\cite[Lemma 2.1.4]{kolysys}.  The correct statement (which is 
all that was used elsewhere in \cite{kolysys}) should be:

\begin{mlem}
If $(T/\mm T)^{G_\Q} = 0$ then $(T/I T)^{G_\Q} = 0$ for every ideal $I$ of $R$.
\end{mlem}

\end{document}